\documentclass[english,a4paper,11pt]{article}
\usepackage[a4paper,hmargin=1.0in,vmargin=1.0in]{geometry}
\usepackage[round]{natbib}
\usepackage[usenames,dvipsnames]{xcolor}
\usepackage[linktocpage=true,pagebackref=true,colorlinks,allcolors=blue,bookmarks=true,bookmarksopen,bookmarksnumbered]{hyperref}

\usepackage{xurl}
\usepackage{sectsty}
\usepackage[english]{babel}
\usepackage[utf8]{inputenc}
\usepackage{amsfonts}
\usepackage{amssymb}
\usepackage{amsmath}
\usepackage{comment}
\usepackage{setspace}
\usepackage{bbm}
\usepackage{mathtools}
\usepackage{enumitem}
\usepackage{amsthm}
\usepackage{tikz}
\usepackage{booktabs}
\usepackage{array}
\usepackage{cleveref}
\usepackage{bm}
\usepackage{pdflscape}
\usepackage{adjustbox}
\usepackage{longtable}
\usepackage{caption}
\usepackage{multirow}
\usepackage{rotating}
\usepackage{graphicx}
\usepackage{breqn}
\usepackage{needspace}

\newcommand{\stkout}[1]{\ifmmode\text{\sout{\ensuremath{#1}}}\else\sout{#1}\fi}

\newtheoremstyle{DStheorem}
  {\topsep}
  {\topsep}
  {\itshape}
  {0pt}
  {\scshape}
  {.}
  { }
  {\thmname{#1}\thmnumber{ #2}\thmnote{ (#3)}}
\theoremstyle{DStheorem}
\newtheorem{theorem}{Theorem}[section]
\newtheorem{lemma}[theorem]{Lemma}
\newtheorem{claim}[theorem]{Claim}



\let\oldproofname=\proofname
\renewcommand{\proofname}{\rm\sc{\oldproofname}}

\sectionfont{\large} \subsectionfont{\normalsize}
\allowdisplaybreaks
\makeindex

\usepackage{xcolor}
\begin{document}

\begin{titlepage}

\title{Joint Inventory Placement, Assortment Personalization, and Order Fulfillment for Substitutable Products}
\author{Mikhail Fadin \quad Omar El Housni \quad Huseyin Topaloglu \\[1em] \normalsize School of Operations Research and Information Engineering \\ \normalsize Cornell Tech, New York, NY 10044, USA \\[0.3em] \normalsize \texttt{mf853@cornell.edu}, \texttt{oe46@cornell.edu}, \texttt{ht88@cornell.edu}}
\date{}
\maketitle

\pagenumbering{roman}
\thispagestyle{empty}

\begin{abstract}
Modern online retailers leverage networks of distributed warehouses to rapidly fulfill customer orders. We study a problem of jointly deciding (i) how to allocate inventories across warehouses in the network subject to warehouse capacity and product supply constraints, (ii) how to dynamically select personalized product assortments based on customer preferences and location, as well as real-time stock levels, and (iii) which warehouse to use to fulfill the product chosen by the customer to maximize expected profit. In our model, the firm chooses an inventory placement across warehouses at the start of the selling horizon. Customers of different types then arrive at discrete time periods over a finite time horizon according to a known distribution. When offered an assortment, each customer type chooses at most one product according to a discrete choice model. The firm then chooses a feasible warehouse from which to fulfill the chosen product, depleting its stock and earning a profit that depends on the product, customer type, and warehouse choice. For any fixed inventory placement, the optimal dynamic assortment and fulfillment policy can be characterized by a high-dimensional dynamic program, which is intractable in general. To address this, we develop approximation algorithms and policies with provable theoretical guarantees and strong empirical performance.  Under general probabilistic choice models, we establish asymptotically optimal approximation algorithms as warehouse capacities and product supplies scale. Under the multinomial logit (MNL) choice model, we obtain constant-factor approximation algorithms, with approximation factors ranging from 0.080 in the most general setting to 0.199 under stationarity and unlimited product supplies, which means that only warehouse capacity constraints are present. To the best of our knowledge, these results provide the first provably efficient algorithms for the joint inventory placement, assortment personalization, and order fulfillment problem. Computational experiments demonstrate that in practice our methods achieve near-optimal profits even for modest capacities and supply levels and, combined with simple additional heuristics, can perform even better and scale extremely well computationally.

\end{abstract}

\bigskip \noindent {\small {\bf Keywords}: Assortment Optimization, Submodular Function Optimization, Multinomial Logit Model.}

\end{titlepage}

\pagenumbering{roman}
\thispagestyle{empty}
\tableofcontents

\newpage
\pagenumbering{arabic}
\setcounter{page}{1}
\section{Introduction}\label{sec:intro}
Leading e‑commerce platforms use intricate warehouse networks to strategically pre‑position inventory, flexibly customize offerings, and efficiently fulfill orders for customers spanning across multiple regions. The ability to tailor the product assortment based on customer information such as location, browsing history, or past purchases is a key source of competitive advantage. From the customer’s standpoint, personalized assortments align the displayed products with individual preferences and create a more satisfying shopping experience. From the retailer’s standpoint, personalized assortments enable strategic demand shaping, steering orders away from products in short supply in nearby warehouses and toward those with excessive stock.  Realizing these benefits requires coordinating several interdependent decisions. The retailer must decide how to allocate the products across warehouses, which assortment to present to each incoming customer, and from which warehouse to fulfill each chosen product, trading off shipping costs and inventory preservation for future demand. The resulting joint optimization problem captures the full cycle from inventory placement to customer choice and fulfillment.
To make the setting concrete, we now describe the sequence of decisions the retailer faces over the selling horizon. From the retailer's perspective, at the start of the selling horizon we choose how many units of each product to stock at every warehouse. During the selling horizon, different types of customers arrive stochastically. A customer type encodes any available information such as zip code, purchase history, or loyalty level. When a customer arrives, we observe their type and select a set of products to offer, based on current inventory levels at each warehouse, the customer's type, and time elapsed in the selling horizon. The customer then makes a purchase according to a probabilistic choice model and we fulfill the chosen product from a feasible warehouse of our choice, earning a profit that may depend not only on the product, but also on the customer type and warehouse location, reflecting varying shipping costs and personalized pricing effects. Profits may be negative when shipping costs exceed revenue. Our goal is to choose an initial stocking (inventory placement) plan and a policy for selecting assortments over time and making fulfillment decisions to maximize total expected profit over the selling horizon subject to supply and capacity constraints. Here, capacity constraints limit the total inventory stored at each warehouse, and supply constraints limit the total available units of each product across all warehouses. We take the total supply of each product as an exogenous input fixed before the selling season, for example by the team responsible for procurement decisions, so that the placement decision concerns how to distribute the given supply of each product across warehouses rather than how much to procure. We refer to this joint inventory \emph{placement}, assortment \emph{personalization}, and order \emph{fulfillment} problem as \ref{ppf}; see Section~\ref{sec:form} for the formal definition. We construct policies that maintain good performance under broad customer choice models and provide even stronger guarantees when choices follow the multinomial logit (MNL) model.

\paragraph{Fundamental challenges.}
Solving the \ref{ppf} problem comes with several fundamental challenges. First, computing the exact optimal assortment personalization and order fulfillment policy would require solving a large dynamic programming model that tracks inventory at every warehouse and is intractable for realistic problem sizes. Second, unlike most models, we study a setting where the profit of a sale depends simultaneously on the product purchased, the type of customer being served, and the warehouse that fulfills the order. This triadic heterogeneity is the principal source of technical difficulty because the same product can yield different profit depending on who buys it and where it is shipped from, and because each fulfillment decision depletes inventory at a specific warehouse and changes what can be profitably offered and fulfilled later. We must strategically balance allocation at each warehouse: stocking, assortment, and routing decisions are tightly interdependent over time, and the future marginal value of a unit of inventory, that is, the additional expected profit attributable to the last unit added over the remaining horizon, differs across products and fulfilling warehouses. This value also shifts as inventories deplete, because the worth of a unit depends on the availability of substitutes, of units at other warehouses, and on the geography-dependent shipping costs of serving customers from different warehouses, which makes the assortment and stocking decisions difficult. Tracking how inventory is depleted over time is substantially more involved than in models that assign revenue depending only on the product. Standard analysis shortcuts, such as uniformly scaling the fluid solution by a single common factor to control stock-out risk, no longer apply: even in the \emph{stationary} regime, i.e., when the customer arrival distribution is time-homogeneous, establishing performance guarantees requires nontrivial arguments to control how realized demand and purchases deviate from their fluid expectations across customer types and warehouses. In the general non-stationary regime, the difficulty is starker: natural randomized algorithms based on fluid relaxations, which suffice in simpler settings, can perform arbitrarily poorly because they ignore how profitability and feasibility shift as the horizon unfolds and inventories deplete. In addition, \emph{inventory-aware} policies that always fulfill products chosen by customers and are a widely used standard both in theory and industry are extremely hard to analyze directly in this setting due to complex intertwinement of inventory depletion and assortment and fulfillment decisions. This motivates us to first develop and analyze relaxed policies without the fulfillment guarantee, that is, without requiring that every offered product a customer chooses be fulfilled from available inventory, and then convert them into inventory-aware policies. Unlike standard settings, where removing unavailable products from an assortment only redirects demand toward other offered products and, because the revenue of a sale depends solely on the product sold, never lowers expected performance, such monotonicity is not apparent here, since profit also depends on the customer type and the fulfilling warehouse. Therefore, the conversion step requires a more sophisticated approach.

\subsection{Our Contributions}
We study the joint inventory placement, assortment personalization, and order fulfillment problem, \ref{ppf}, described above and develop algorithms and policies for it with provable performance guarantees and strong empirical performance. Our results are organized along two axes: whether customer arrivals are \emph{stationary}, meaning the arrival distribution of customer types is time-homogeneous over the selling horizon, or \emph{non-stationary}, and whether the underlying choice model is general or of the MNL type. Table~\ref{tab:main-results} summarizes the resulting approximation ratio guarantees for the \ref{ppf} problem. In particular, the general choice model row gives approximation ratios that approach one as the minimum capacity across warehouses and the minimum supply across products grow, while the MNL row reports explicit constant-factor guarantees, distinguishing the setting with only warehouse capacity constraints from the setting with both warehouse capacity and product supply constraints.

\begin{table}[htbp]
\centering
\caption{Approximation-ratio guarantees for the \ref{ppf} problem.}
\label{tab:main-results}
\footnotesize
\setlength{\tabcolsep}{4.2pt}
\renewcommand{\arraystretch}{1.5}
\begin{tabular}{l cc cc}
\toprule
\multirow{2}{*}{\textbf{Choice model}} & \multicolumn{2}{c}{\textbf{Stationary arrivals}} & \multicolumn{2}{c}{\textbf{Non-stationary arrivals}} \\
\cmidrule(lr){2-3}\cmidrule(lr){4-5}
 & WH & WH${+}$SPL & WH & WH${+}$SPL \\
\midrule
General
  & $1-2\sqrt{\tfrac{n}{K^{\min}}}$
  & $1-2\max\!\left(\sqrt{\tfrac{n}{K^{\min}}},\sqrt{\tfrac{L}{C_{\min}}}\right)$
  & $1-2\sqrt{\tfrac{nm}{K^{\min}}}$
  & $1-2\max\!\left(\sqrt{\tfrac{nm}{K^{\min}}},\sqrt{\tfrac{Lm}{C_{\min}}}\right)$ \\[2.6ex]
MNL
  & $0.199$ & $0.158$ & $0.101$ & $0.080$ \\
\bottomrule
\end{tabular}

\vspace{5pt}
\begin{minipage}{\textwidth}
{\footnotesize Here WH marks the setting with only warehouse capacity constraints and WH${+}$SPL the setting with both warehouse capacity and product supply constraints; $n$ is the number of products, $m$ the number of customer types, and $L$ the number of warehouses, while $K^{\min}=\min_{\ell\in\mathcal{L}}K^{\ell}$ is the minimum storage capacity across warehouses and $C_{\min}=\min_{i\in\mathcal{N}}C_i$ the minimum supply across products. Each entry is the fraction of the optimal expected profit our policies guarantee: in the general model these bounds approach $1$ as $K^{\min}$ and $C_{\min}$ grow, whereas in the MNL model they are the stated constants.\par}
\end{minipage}
\end{table}

\paragraph{Comparison to prior work.}
The guarantees in Table~\ref{tab:main-results} improve upon and unify several strands of prior work. The closest benchmark is \citet{baietal2025}, who study the single-warehouse problem with profits depending only on products. For general choice models they obtain a guarantee with a cubic-root convergence rate, which our results sharpen to a faster square-root rate while additionally accommodating multiple warehouses and customer-type-dependent profits. We are able to achieve this by introducing a novel joint construction that derives the stocking, assortment, and fulfillment decisions from a single fluid solution rather than sequentially. For the MNL model, \citet{baietal2025} give a $\tfrac{1}{4}(1-1/e)\approx 0.158$ constant, subsequently improved by a factor of two to $\approx 0.316$ by \citet{rajan2025}; our constants under stationary arrivals ($0.199$ without and $0.158$ with product supply constraints) are of comparable magnitude but hold in the substantially more general setting with multiple warehouses and profits that depend jointly on the product, the customer type, and the fulfilling warehouse. To handle this richer structure, we develop a sophisticated selective scaling procedure for fulfillment probabilities together with a triadic profit analysis.%

\noindent To obtain the guarantees for the \ref{ppf} problem, we build our results in several steps, each corresponding to a key component of the analysis. We now describe these components.

\paragraph{\bf Inventory-agnostic assortment and fulfillment policies (Section~\ref{sec:FixedStock}).}
In Section~\ref{sec:FixedStock}, we develop the key analytical and building blocks used throughout the paper. We treat the inventory placement as given, focusing on developing near-optimal assortment personalization and order fulfillment policies. We work with an \emph{inventory-agnostic} relaxation of the problem, in which the firm may offer out-of-stock products or decline to fulfill orders after customer choices. This relaxation is used as an analytical device: we produce clean policies that are later converted into feasible, inventory-aware ones. Within this relaxed environment, we construct three policies based on a fluid relaxation, which serves as an upper bound on the maximum expected profit. Section~\ref{sec:FixedStock} establishes that each of the three policies guarantees an expected profit that is at least a constant fraction of the fluid objective value. In the stationary regime, the \emph{Simple Inventory-Agnostic Policy} achieves a $(1-1/e)$-approximation guarantee relative to the fluid upper bound. At a high level, the policy uses the fluid solution to guide the assortment and routing decisions over time. The approximation guarantee proof tracks how routing and inventory co-evolve when profitability depends on both the arriving customer type and the chosen warehouse. In the non-stationary regime, this simple policy may perform arbitrarily badly. We therefore develop a novel profit-based scaling method that selectively suppresses low-value demand while preserving flows that can be routed to yield high profit. The scaling balances reductions across customer types and warehouse locations so that it remains resilient to future stock-outs. This approach significantly outperforms the naive strategy of uniformly scaling the fluid solution by a common factor chosen to control stock-out probability. The corresponding \emph{Advanced Inventory-Agnostic Policy} provides a $0.322$-approximation guarantee. Finally, we construct an asymptotically optimal policy, the \emph{Modified Inventory-Agnostic Policy}, which refines the scaling to drive the approximation ratio to one in the large inventory levels regime. These approximation guarantees serve as the foundation for the performance bounds proved under both general choice models and the MNL model.

\paragraph{\bf Asymptotically optimal guarantees under general choice models (Section~\ref{sec:joint}).}
Having developed assortment personalization and order fulfillment policies for a fixed inventory placement, we now bring the placement decision back into the picture and study the full joint problem. In this section, we develop approximation algorithms for the \ref{ppf} problem under a general choice model. Departing from the conventional two-stage approach, where one first selects an approximately optimal inventory placement and then uses an assortment and fulfillment policy with its own approximation guarantees, we base both the inventory placement decision and the dynamic assortment and fulfillment policy on an optimal solution of a joint fluid relaxation. Our construction leverages policies from Section~\ref{sec:FixedStock}. Unlike the analyses used in simpler single-warehouse settings such as \citet{baietal2025}, which bound inventory consumption as a fraction of the maximum stock, our analysis focuses on the absolute deviations between expected demand and the realized (random) cumulative inventory consumption across the horizon. This tighter coupling yields square-root convergence rates in the minimum warehouse capacity $K^{\min}$ and product supply $C_{\min}$, which is the benchmark in closely related revenue management settings, and significantly improves the cubic-root rate of \citet{baietal2025} obtained only for the single-warehouse setting with profits depending only on products.

\paragraph{\bf Constant-factor guarantees under the MNL choice model (Section~\ref{sec:mnljoint}).}
For the MNL choice model, we first optimize the inventory placement by introducing a surrogate objective that we prove to be monotone DR-submodular even with customer-type-dependent profits. This enables a provably good polynomial-time approximation using classical submodular optimization algorithms. We then apply assortment and fulfillment policies from Section~\ref{sec:FixedStock}. This framework yields strong approximation algorithms whose performance depends on whether arrivals are stationary and on whether the setting imposes only warehouse capacity constraints or both warehouse capacity and product supply constraints. Approximate numerical guarantees are showcased in Table~\ref{tab:main-results}.

\paragraph{\bf Conversion to inventory-aware policies (Section~\ref{sec:inventorybased}).}
As mentioned before, policies built in Section~\ref{sec:FixedStock} are inventory-agnostic: they may offer out-of-stock products or decline to fulfill orders after customer choices. To address this, Section~\ref{sec:inventorybased} presents a universal conversion that maps any inventory-agnostic policy from Section~\ref{sec:FixedStock} to a fully inventory-aware policy that never offers out-of-stock products and always fulfills any product it offers if chosen by the customer. Crucially, the conversion preserves expected profit. As a result, all guarantees proven in Sections~\ref{sec:joint} and~\ref{sec:mnljoint} translate directly into guarantees for feasible policies for the original \ref{ppf} problem. The key idea is to offer, in place of the assortment the inventory-agnostic policy would show, a randomized assortment over currently available products that matches the per-product purchase probabilities induced by that policy's intended assortment; since these probabilities are preserved exactly, so is the expected profit.

\paragraph{\bf Computational experiments (Section~\ref{sec:numerics}).}
To complement our theoretical guarantees, we conduct extensive computational experiments. These experiments validate our framework and show that our policies are highly effective in practice. The results demonstrate that our fluid-based joint optimization algorithms are empirically near-optimal, consistently achieving performance that approaches the fluid upper bound in large-scale settings. We also show that the empirical performance systematically improves as the problem scales. Here, scaling refers to increasing the horizon length and correspondingly scaling product inventory and warehouse capacities, which makes the fluid approximation more accurate and highlights the asymptotic behavior implied by our theoretical results.

\subsection{Related Literature}

Early work on joint assortment and inventory decisions dates back at least to \citet{honhon2010stockout} who study joint assortment and inventory decisions under a rank-based substitution model in which customers move down a customer-type-dependent preference list when higher-ranked products stock out. Several subsequent papers enrich the choice model at the cost of assuming uniform customer preferences. In \citet{topaloglu2013joint}, the retailer selects both stocking quantities and assortments over a finite selling horizon, and customers choose among the offered products according to the MNL model. \citet{mouchtaki2021mcc} consider joint assortment and inventory planning under the Markov chain choice model. These papers enlarge the class of choice models under which optimal joint assortment and inventory decisions can be tractably approximated but they do not incorporate customer-type-specific assortment personalization. By contrast, in a static two-stage assortment problem, \citet{elhousnitopaloglu2023} first select a cardinality-constrained set of products to carry and then, for a single random customer visit, offer a type-specific subset after observing the customer's type.

Our work is most closely connected to the relatively recent stream that couples offline stocking with personalized online offering, typically analyzed via a choice-based linear program. We generalize the setting from \citet{baietal2025}, where joint assortment optimization and stocking decision is studied with one warehouse and revenues dependent only on products. They introduce a novel framework for a constant factor approximation under the MNL choice model involving optimizing a DR-submodular surrogate to get near-optimal stocking quantities, which they then combine with a simple personalization policy with a provable constant-factor approximation guarantee. In addition, they construct the first asymptotically optimal approximation algorithm for general choice models with cubic-root convergence rate. \citet{rajan2025} improve the constant factor approximation guarantees of the assortment-personalization policy of \citet{baietal2025} and develop a more efficient stocking policy, improving the overall approximation guarantee by a factor of two. Building on that foundation, we study the same joint stocking-personalization paradigm but in a networked fulfillment setting where the per-sale profit depends jointly on the product, the arriving customer type, and the fulfillment location. We generalize the results above to this more complex setting and dramatically improve the approximation guarantee under general choice models.

Parallel work by \citet{HuangFeldmanZhang2024} studies the same joint assortment and inventory problem when resources are reusable; they show that similar surrogates and submodular structure yield constant-factor approximations in that setting under some usage-duration assumptions. Although our paper focuses on single-use units rather than reusable ones, their perspective underscores that the fluid-guided approach extends beyond the single-use case.

There is also a line of work that jointly optimizes assortment (or broader demand-shaping decisions) and fulfillment. \citet{lei2022framing} study a joint product-framing and fulfillment problem under the MNL choice model. They take the initial inventory distribution across multiple fulfillment centers as given. Over time, the retailer chooses how to display products and, should the customer choose some product, where to fulfill the order from. Their heuristic, derived from a deterministic relaxation, is shown to be asymptotically optimal. Their model highlights the benefit of coordinating demand shaping with fulfillment routing, though it does not consider offline placement decisions.

A related body of work studies order fulfillment in e-commerce networks in isolation: given realized demand, the retailer routes each accepted order across geographically dispersed fulfillment centers to control shipping and handling cost, often through linear-programming relaxations and rounding. \citet{acimovic2015making} propose a heuristic that makes real-time fulfillment decisions, using opportunity-cost estimates from a linear-programming relaxation to account for the value of reserving inventory for future demand, while \citet{jasin2015lp} develop an LP-based correlated rounding scheme with provable guarantees for multi-item fulfillment. These models take assortment, prices, and inventory placement as given and optimize routing alone; our setting embeds such a fulfillment decision but jointly optimizes it together with inventory placement and assortment personalization.

A growing body of literature studies joint placement and fulfillment. \citet{bai2025placement} analyze a multi-warehouse setting in which the retailer chooses an offline placement of inventory and then, upon each arrival, selects both a delivery promise, which influences conversion probability, and a fulfillment center. They obtain constant-factor and asymptotically optimal approximation guarantees. In related work, \citet{epstein2024placement} consider optimizing the placement of products across multiple warehouses in anticipation of a downstream online matching algorithm governing fulfillment. They show a constant-factor approximation to optimal joint placement-fulfillment performance for a fixed choice of the downstream online policy, so the remaining problem is to choose the offline placement that performs well under that given online policy. 

Separately, in work developed concurrently with and independently of ours, \citet{zuo2026joint} study joint inventory allocation and dynamic assortment optimization across multiple stores. Their joint model is a substantially restricted special case of ours: after the initial allocation, stores operate independently; each store serves a fixed number of customers who are homogeneous within the store; there is no customer-level personalization or fulfillment decision; and their joint results assume the MNL choice model. Exploiting this structure, they obtain guarantees for this special case and also study non-adaptive assortment policies.

Taken together, these strands of literature highlight the importance of coordinating assortment, inventory placement (stocking), and fulfillment decisions.  Existing models typically optimize at most two of these components at a time, rely on restrictive demand or network structures, or do not incorporate customer-type-dependent personalization.  Our work contributes by studying a very general joint inventory placement, assortment personalization, and order fulfillment setting and providing approximation guarantees under both the MNL and general choice models, often improving existing guarantees even though we consider a much more general problem setting.

\subsection{Outline}
In Section~\ref{sec:form}, we formulate our model. In Section~\ref{sec:FixedStock}, we develop assortment personalization and order fulfillment policies with approximation guarantees, which we then use in Sections~\ref{sec:joint} and~\ref{sec:mnljoint} to obtain approximation algorithms for the \ref{ppf} problem. In Section~\ref{sec:joint}, we provide approximation algorithms which are asymptotically optimal under a general choice model as product supplies and warehouse capacities scale. In Section~\ref{sec:mnljoint}, we build constant-factor approximation algorithms under the MNL choice model. In Section~\ref{sec:inventorybased}, we convert all of our assortment personalization and order fulfillment policies into inventory-aware ones; this conversion preserves the expected performance. In Section~\ref{sec:numerics}, we present computational experiments. In Section~\ref{sec:concl}, we conclude and discuss possible future research directions.

\section{Problem Formulation}\label{sec:form}

We consider a firm that sells $n$ products, indexed by $\mathcal{N} = \{1, \dots, n\}$, from a network of $L$ warehouses, indexed by $\mathcal{L} = \{1, \dots, L\}$. The firm serves $m$ distinct types of customers, indexed by $\mathcal{M} = \{1, \dots, m\}$. The profit from delivering product $i \in \mathcal{N}$ to a customer of type $j \in \mathcal{M}$ from warehouse $\ell \in \mathcal{L}$, denoted by $r_{ij}^{\ell}$, may depend on all three parameters, reflecting varying shipping costs which depend on the product itself, warehouse and customer location, incorporated in the customer type.

The problem unfolds over a finite selling horizon of $T$ discrete time periods, indexed by $\mathcal{T} = \{1, \dots, T\}$. In each time period $t \in \mathcal{T}$, a customer of type $j$ arrives with probability $\lambda_{jt}$; with probability $1 - \sum_{j \in \mathcal{M}} \lambda_{jt}$, no customer arrives. Arrivals are independent across time periods. Let $\tau_j = \sum_{t=1}^{T} \lambda_{jt}$ denote the expected number of type-$j$ customer arrivals over the horizon.

When a customer of type $j$ is offered an assortment of products $S \subseteq \mathcal{N}$, she chooses product $i \in S$ with probability $\phi_{ij}(S)$, given by a discrete choice model $\phi$. Choices of different customers are independent across time periods. Should the customer choose some product $i$, the firm must fulfill it from a warehouse $\ell$ that has positive inventory of product $i$, depleting the corresponding stock by one unit and earning a profit of $r_{ij}^{\ell}$, which is not required to be non-negative, as discussed in Section~\ref{sec:intro}. 

Throughout this work, we assume the choice model satisfies weak substitutability, meaning that adding a new product to an assortment does not increase the choice probability of existing products. Formally, for any assortment $S \subseteq \mathcal{N}$, any product $k \notin S$, and any product $i \in S$, we have $\phi_{ij}(S \cup \{k\}) \le \phi_{ij}(S)$. We also assume that for any assortment $S \subseteq \mathcal{N}$, and any product $k \notin S$, we have $\phi_{kj}(S) = 0$. These are standard properties satisfied by all random utility choice models (RUM), including the MNL model, which is of particular interest in our work.

The firm seeks to maximize its total expected profit over the selling horizon by making two sets of decisions: (i) a static stocking (inventory placement) decision at the beginning of the horizon, determining the quantity of each product to stock at each warehouse, and (ii) a dynamic policy for assortment personalization and order fulfillment. Assortment personalization refers to choosing which subset of products to offer to each arriving customer, while order fulfillment refers to deciding which warehouse ships a chosen product. The initial stocking decisions are subject to two types of constraints: each warehouse $\ell$ has a finite storage capacity of $K^{\ell}$ units, and each product $i$ has a total supply limit of $C_i$ units across all warehouses. The dynamic policy specifies which assortment to offer and from which warehouse to ship each product $i \in \mathcal{N}$, should it be chosen, for any customer arrival depending on the customer type, the time period, and the remaining inventory levels.

To formalize the dynamic policy problem for a fixed initial inventory vector, we construct a dynamic program. Let the state of the system at the beginning of time period $t$ be the vector of inventory levels $\boldsymbol{x} = (x_i^\ell : i \in \mathcal{N}, \ell \in \mathcal{L})$, where $x_i^\ell$ denotes the inventory level of product $i$ at warehouse $\ell$. In each time period $t$, if a customer of type $j$ arrives, the decision-maker takes an action $(S, f)$, where $S \subseteq \mathcal{N}$ is the assortment of products to offer and $f: S \to \mathcal{L}$ is a fulfillment function that maps each product in the assortment to a warehouse. A decision $(S,f)$ is feasible at the state $\boldsymbol{x}$ if there is positive inventory for each product at its assigned warehouse, i.e., $x_i^{f(i)} > 0$ for all $i \in S$. Let $I(\boldsymbol{x})$ denote the set of all such feasible decisions.

Let $J_t(\boldsymbol{x})$ be the maximum total expected profit from time periods $t, \dots, T$, given state $\boldsymbol{x}$ at the beginning of time period $t$. The value functions satisfy the following dynamic program:
\begin{align*}
J_t(\boldsymbol{x}) ={}& \sum_{j\in\mathcal{M}}\lambda_{jt} \max_{(S,f)\in I(\boldsymbol{x})} \left\{ \sum_{i\in S}\phi_{ij}(S)\left[r_{ij}^{f(i)}+J_{t+1}(\boldsymbol{x}-\boldsymbol{e}_i^{f(i)})\right] \right. \\
& \quad \left. +\left(1-\sum_{i\in S}\phi_{ij}(S)\right)J_{t+1}(\boldsymbol{x}) \right\} + \left(1-\sum_{j\in\mathcal{M}}\lambda_{jt}\right)J_{t+1}(\boldsymbol{x}),
\end{align*}
with the boundary condition $J_{T+1}(\boldsymbol{x}) = 0$ for all $\boldsymbol{x}$. Here, $\boldsymbol{e}_i^{\ell}$ denotes the unit vector corresponding to product $i$ at warehouse $\ell$. On the right side of the equation, a customer of type $j$ arrives in time period $t$ with probability $\lambda_{jt}$. If the firm offers assortment $S$ to her, she chooses product $i \in S$ with probability $\phi_{ij}(S)$ and one unit of product $i$ is depleted from warehouse $f(i)$, yielding a profit of $r_{ij}^{f(i)}$. The customer does not choose a product with probability $1-\sum_{i\in S}\phi_{ij}(S)$ and there is no customer arrival in time period $t$ with probability $1-\sum_{j\in\mathcal{M}}\lambda_{jt}$. In either of those cases, no inventory is consumed and the firm gains no profit.

By rearranging terms and factoring out $J_{t+1}(\boldsymbol{x})$, we can write the dynamic program more compactly as:
\begin{align*}
J_t(\boldsymbol{x}) = J_{t+1}(\boldsymbol{x}) + \sum_{j\in\mathcal{M}}\lambda_{jt} \max_{(S,f)\in I(\boldsymbol{x})}\left\{ \sum_{i\in S}\phi_{ij}(S)\left[r_{ij}^{f(i)} + J_{t+1}\left(\boldsymbol{x}-\boldsymbol{e}_i^{f(i)}\right) - J_{t+1}(\boldsymbol{x})\right] \right\}.
\end{align*}
The firm's overall problem is to choose an initial inventory vector $\boldsymbol{c} = (c_i^\ell : i \in \mathcal{N}, \ell \in \mathcal{L})$ and an inventory-aware assortment and fulfillment policy to maximize the total expected profit, subject to capacity and supply constraints. Therefore, the Joint Inventory Placement, Assortment Personalization, and Order Fulfillment Problem, which we refer to as  \ref{ppf}, can be stated as:
\begin{equation}\tag{\sf PPF}\label{ppf}
\text{OPT} = \max_{\boldsymbol{c} \in \mathbb{Z}_+^{nL}} \left\{ J_1(\boldsymbol{c}) \;\; : \;\; \sum_{i \in \mathcal{N}} c_i^\ell \le K^\ell \;\; \forall \ell \in \mathcal{L}, \;\; \sum_{\ell \in \mathcal{L}} c_i^\ell \le C_i \;\; \forall i \in \mathcal{N} \right\}.
\end{equation}

Here, $\text{OPT}$ denotes the optimal total expected profit of the \ref{ppf} problem, obtained by maximizing $J_1(\boldsymbol{c})$ over all feasible initial inventory vectors $\boldsymbol{c}$. The state space of the dynamic program grows exponentially with the number of products and warehouses, making the direct computation of $J_1(\boldsymbol{c})$ and the solution of the \ref{ppf} problem intractable for realistically sized problems. This computational challenge necessitates the development of approximation methods. Our approach, detailed in the subsequent sections, focuses on finding a near-optimal initial inventory placement $\boldsymbol{c}$ and a corresponding high-performance dynamic policy for assortment personalization and fulfillment. We assume the existence of a polynomial-time oracle for solving the single-period assortment optimization problem for the choice model: given a customer type $j$ and per-product rewards, it returns an assortment $S \subseteq \mathcal{N}$ that maximizes the customer's expected reward under $\phi$. This is a standard assumption satisfied by many choice models, including the multinomial logit model.

In the next sections, we separate solving the \ref{ppf} problem into making a near-optimal stocking decision $\boldsymbol{c}$ and building a near-optimal assortment personalization and fulfillment policy for it. However, instead of directly constructing a feasible policy, we first construct a policy for the {\it relaxed} problem, where the firm is not required to fulfill customer orders, which also implies that the firm is allowed to include out-of-stock products in offered assortments. These policies are then converted to those that fulfill any offered product if it is chosen by a customer, as required in the original problem, by using the technique from Section~\ref{sec:inventorybased}. The conversion preserves expected profit and thus maintains the performance guarantees.

Note that allowing the firm to decline fulfilling customer orders is equivalent to maintaining the fulfillment requirement but adding a virtual warehouse, labeled as $L+1$, with infinite stock of each product, and zero profit for all products and customer types. Declining an order in the original setting is equivalent to choosing to fulfill the order from the virtual warehouse in the relaxed setting. Throughout the paper, we consider this relaxed setting, denoting the extended set of warehouses as $\mathcal{L}_+ = \mathcal{L} \ \cup \{L+1\}$. We call assortment and fulfillment policies which are guaranteed not to use the virtual warehouse {\it inventory-aware}. These policies directly correspond to feasible policies in the original setting. Assortment and fulfillment policies without this guarantee are called {\it inventory-agnostic}. Working in this relaxed setting with warehouse set $\mathcal{L}_+$, let $J_1^*(\boldsymbol{c})$ be the optimal expected profit of the relaxed \ref{ppf} problem for a fixed placement $\boldsymbol{c}$, and let $\text{OPT}^* = \max_{\boldsymbol{c}} J_1^*(\boldsymbol{c})$ be its optimum over feasible placements. Restricting attention to inventory-aware policies in the relaxed setting gives exactly $J_1(\boldsymbol{c})$ and $\text{OPT}$, as defined earlier. Sections~\ref{sec:FixedStock}, \ref{sec:joint}, and \ref{sec:mnljoint} work with the relaxed, inventory-agnostic values $J_1^*(\boldsymbol{c})$ and $\text{OPT}^*$, and Section~\ref{sec:inventorybased} converts the resulting policies back to inventory-aware ones. By definition, $J_1(\boldsymbol{c}) \le J_1^*(\boldsymbol{c})$ and $\text{OPT} \le \text{OPT}^*$. In a slight abuse of notation, we refer to these relaxed problems simply as the assortment and fulfillment problem and the \ref{ppf} problem, dropping the word ``relaxed''.

In Section~\ref{sec:FixedStock}, we build inventory-agnostic assortment and fulfillment policies with provable performance guarantees. We then utilize these policies  to build approximation algorithms for the \ref{ppf} problem in Sections~\ref{sec:joint} and \ref{sec:mnljoint}. Finally, we show how to convert these policies into inventory-aware ones, while preserving the expected profit, in Section \ref{sec:inventorybased}.

\section{Inventory-Agnostic Assortment and Fulfillment Policies}\label{sec:FixedStock}

In this section, we focus on a key block in our approximation framework: developing inventory-agnostic assortment and fulfillment policies and establishing their performance guarantees. These policies are then converted to inventory-aware ones using the technique from Section \ref{sec:inventorybased}. As discussed in Section \ref{sec:form}, the dynamic program for the assortment and fulfillment problem is intractable to solve directly due to exponentially growing state space. Instead, we focus on developing approximately optimal policies. 

We treat the inventory placement $\boldsymbol{c}$ as given and fixed, establishing approximation guarantees that depend on whether customer arrivals are stationary and whether the minimum positive product inventory $c_{\min} = \min\{c_i^\ell : i\in\mathcal N,\ \ell\in\mathcal L,\ c_i^\ell>0\}$, the smallest number of units of any product stocked at any warehouse, is large. The all-zero placement is trivial, so we assume that $c_{\min}>0$. We use the lowercase $c_{\min}$ for this realized stocking quantity to distinguish it from the minimum product supply $C_{\min}$ and the minimum warehouse capacity $K^{\min}$. In the special case of stationary arrivals (i.e., $\lambda_{jt} = \lambda_j$ for all $t$), we develop a policy with the following performance guarantee.

\begin{theorem}[Performance Guarantee Under Stationarity]\label{thm:fixed stock stationary}
Under a general choice model and the stationarity assumption, we can compute a $\max\left\{1-\frac{1}{e}, 1-\frac{1}{\sqrt{c_{\min}}}\right\}$-approximate assortment and fulfillment policy in polynomial time.
\end{theorem}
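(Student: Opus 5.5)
We plan to build the policy from a fluid relaxation that upper bounds $J_1^*(\boldsymbol{c})$. Use variables $y_j(S,f)\ge 0$, the fraction of type-$j$ arrivals that receive action $(S,f)$ with $f:S\to\mathcal{L}_+$, with $\sum_{(S,f)} y_j(S,f)\le 1$ for each $j$. The objective is $\sum_j \tau_j \sum_{(S,f)} y_j(S,f)\sum_{i\in S}\phi_{ij}(S) r_{ij}^{f(i)}$. The constraints are $\sum_j \tau_j \sum_{(S,f): f(i)=\ell} y_j(S,f)\phi_{ij}(S)\le c_i^\ell$ for every real warehouse $\ell\in\mathcal{L}$. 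Any policy, including the optimal relaxed one, induces a feasible $y$ by averaging its action frequencies, because expected consumption cannot exceed stock. Hence the fluid value $\mathrm{FL}\ge J_1^*(\boldsymbol{c})$. The LP has exponentially many columns. We solve it through its dual by column generation: the separation problem for type $j$ with duals $\mu_i^\ell$ assigns each product its best warehouse $\ell$ (including the virtual one), with reward $\max_\ell(r_{ij}^\ell-\mu_i^\ell)$, and then calls the assumed single-period assortment oracle. So an optimal basic solution with polynomial support is computable in polynomial time.

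\textbf{Policy and the $1-1/e$ bound.} When a type-$j$ customer arrives in any period, the Simple Inventory-Agnostic Policy samples $(S,f)$ with probability $y_j(S,f)$ and offers it. If the chosen product $i$ has $f(i)=\ell$ but warehouse $\ell$ is out of stock of $i$, it fulfills from the virtual warehouse $L+1$ instead, earning $0$. By stationarity, every period has the same per-period demand rate for each pair $(i,\ell)$, namely $p_i^\ell=\sum_j\lambda_j\sum_{f(i)=\ell}y_j(S,f)\phi_{ij}(S)$, with $Tp_i^\ell\le c_i^\ell$.

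The key structural point is that the per-period profit conditional on the event ``pair $(i,\ell)$ is requested'' has the same expectation $\bar r_i^\ell$ in every period, independent of which customer type produced it. The expected profit therefore factors as $\sum_{i,\ell}\bar r_i^\ell\,\mathbb{E}[\min(D_i^\ell,c_i^\ell)]$, where $D_i^\ell\sim\mathrm{Bin}(T,p_i^\ell)$. Profits may be negative, so we first clean the fluid solution: we reroute every $(i,j,\ell)$ with $r_{ij}^\ell<0$ to the virtual warehouse. This only raises the objective and keeps feasibility, so all $\bar r_i^\ell\ge 0$ and the factorization gives a lower bound.

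It then remains to show that $\mathbb{E}[\min(D,c)]\ge(1-1/e)\,\mathbb{E}[D]$ whenever $\mathbb{E}[D]\le c$. This is a standard inequality. It follows because the ratio is minimized at $\mathbb{E}[D]=c$, and $\mathbb{E}[\min(\mathrm{Bin}(T,c/T),c)]/c\ge 1-(1-1/T)^T\cdots\ge 1-1/e$ (equivalently, it follows from the Poisson limit). I would cite or reprove this through the concavity of $\min(\cdot,c)$ together with the known $1-c^c e^{-c}/c!$ bound, which is at least $1-1/e$.

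\textbf{The $1-1/\sqrt{c_{\min}}$ bound and the main obstacle.} The expected lost sales are $\mathbb{E}[(D-c)^+]\le \tfrac12\left(\sqrt{\mathrm{Var}(D)+(c-\mathbb{E}D)^2}-(c-\mathbb{E}D)\right)\le \tfrac12\sqrt{\mathrm{Var}(D)}\le \tfrac12\sqrt{c}$. Hence $\mathbb{E}[\min(D,c)]\ge \mathbb{E}D-\tfrac12\sqrt{c}$. This is not directly a ratio when $\mathbb{E}D\ll c$, so this is the main obstacle. To handle it, use the sharper form $\mathbb{E}[(D-c)^+]\le \tfrac12\sqrt{\mathbb{E}D}$ when $\mathbb{E}D\le c$ (since $\mathrm{Var}\le\mathbb{E}D$ and the first bound decreases in $c$). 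Then the ratio is at least $1-\tfrac{1}{2\sqrt{\mathbb{E}D}}$, which is good when $\mathbb{E}D$ is large. When $\mathbb{E}D$ is small relative to $c$, the direct bound $\mathbb{E}[(D-c)^+]\le \mathrm{Var}(D)/(\ldots)$ from Chebyshev/Cantelli in terms of $c-\mathbb{E}D$ covers the case. Combining the two regimes with $c\ge c_{\min}$, I expect to obtain $\mathbb{E}[\min(D,c)]\ge(1-1/\sqrt{c_{\min}})\mathbb{E}D$; a careful case split on $\mathbb{E}D$ versus $c-\sqrt c$ should give it. Summing over $(i,\ell)$ with nonnegative $\bar r_i^\ell$ then yields both ratios relative to $\mathrm{FL}\ge J_1^*(\boldsymbol{c})$, and hence their maximum.
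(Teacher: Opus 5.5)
Your proof is correct in substance and has the same architecture as the paper's. Your $(S,f)$-column LP is equivalent to \eqref{eq:fluid}: taking $y_j(S,f)=\frac{\hat w_j(S)}{\tau_j}\prod_{i\in S}\rho_{ij}(f(i))$ reproduces Policy~1 exactly. Your factorization $\sum_{i,\ell}\bar r_i^\ell\,\mathbb{E}[\min(D_i^\ell,c_i^\ell)]$ is Lemma~\ref{lem:expected_profit_formula}, and the theorem then follows from two tail bounds. The differences lie in how the pieces are proved.

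\begin{itemize}
\item \textbf{Factorization.} Your per-period argument is shorter than the paper's conditioning on the times of unfulfilled demands. Availability at $t$ depends only on earlier periods, so period $t$ contributes $\mathbb{P}(I_t=1)\,\mathbb{P}(A_t=1)\,\bar r_i^\ell$.

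\item \textbf{The $1-1/e$ factor.} The paper uses the bin coupling of Lemma~\ref{exp min bound}, $\mathbb{E}[\min(D,c)]\ge c(1-e^{-\mathbb{E}D/c})$. This holds for any independent Bernoulli demand and makes your reduction to $\mathbb{E}D=c$ unnecessary. Your expression $1-(1-1/T)^T$ is this coupling bound (before the exponential step) evaluated at $\mathbb{E}D=c$. If you go via Poisson instead, you need the convex-order comparison $\mathrm{Bin}(T,p)\le_{\mathrm{cx}}\mathrm{Pois}(Tp)$ together with concavity of $\min(\cdot,c)$. A Poisson limit alone says nothing for finite $T$.

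\item \textbf{The $1-1/\sqrt{c_{\min}}$ factor.} The paper simply cites Lemma~\ref{1e_sqrt}. Your Scarf mean--variance route does reprove it, but you left the case split open. The split you suggest, at $c-\sqrt c$, fails for $c=1$: it sends every $\mu=\mathbb{E}D\in(0,1/4)$ to the $1/(2\sqrt\mu)$ bound, which then exceeds $1$. Split at $\mu=c/4$ instead.
  \begin{itemize}
  \item If $\mu\ge c/4$, the loss ratio is at most $1/(2\sqrt\mu)\le 1/\sqrt c$.
  \item If $\mu<c/4$, then $c-\mu>3c/4$. The bound $\mathbb{E}[(D-c)^+]\le \mu/(4(c-\mu))$ gives a ratio below $1/(3c)\le 1/\sqrt c$ for $c\ge 1$. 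This bound follows from the same Scarf inequality via $\sqrt{\sigma^2+d^2}-d\le \sigma^2/(2d)$; it is exactly Lemma~\ref{main ineq}, not a Chebyshev/Cantelli bound.
  \end{itemize}
\end{itemize}

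Since $c_i^\ell\ge c_{\min}$ for every pair carrying positive flow, the stated factor follows.
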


Without the stationarity assumption, we establish the following result.

\begin{theorem}[Performance Guarantee Under Non-Stationary Arrivals]\label{thm:fixed stock general advanced}
Under a general choice model, we can compute a $\max\left\{0.322, 1-\frac{m+1}{\sqrt{c_{\min}}}\right\}$-approximate assortment and fulfillment policy in polynomial time.
\end{theorem}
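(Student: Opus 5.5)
The plan is to work with a time-indexed fluid (choice-based LP) relaxation of the relaxed problem for the fixed placement $\boldsymbol{c}$. Its variables are $y_{jt}(S,f)\ge 0$ with $\sum_{(S,f)} y_{jt}(S,f)\le 1$ for each $j,t$, where $S\subseteq\mathcal N$ and $f:S\to\mathcal L_+$. For each resource $(i,\ell)$ with $\ell\in\mathcal L$ it imposes
\[
\sum_{t}\sum_j\lambda_{jt}\sum_{(S,f):f(i)=\ell}\phi_{ij}(S)\,y_{jt}(S,f)\le c_i^\ell .
\]
The objective is $\sum_{t,j}\lambda_{jt}\sum_{(S,f)}y_{jt}(S,f)\sum_{i\in S}\phi_{ij}(S)r_{ij}^{f(i)}$.

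\textbf{Step 1 (upper bound and tractability).} I would first show that this LP upper bounds $J_1^*(\boldsymbol{c})$. The argument is the usual one: take the expected state–action frequencies of an optimal policy, which are feasible because realized consumption never exceeds stock. I would then show the LP is solvable in polynomial time. Column generation works because the dual separation problem, for fixed $(j,t)$, reduces to choosing for each product its best warehouse $\ell$ with adjusted reward $r_{ij}^\ell-\mu_i^\ell$ (or the virtual warehouse with reward $0$), followed by one call to the single-period assortment oracle. Writing $p_{ijt}^\ell$ for the fluid flow of resource $(i,\ell)$ to type $j$ in period $t$, the LP objective equals $\sum p_{ijt}^\ell r_{ij}^\ell$. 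It therefore suffices to design a policy that, for every resource $(i,\ell)$, collects each unit of flow $p_{ijt}^\ell$ with a suitable probability, while discarding negative-profit flow, which only helps.

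\textbf{Step 2 (the policy).} When a type-$j$ customer arrives in period $t$, the policy samples $(S,f)$ from $y_{jt}$, offers $S$, and, if product $i$ is chosen, attempts to ship it from $f(i)=\ell$. Shipping goes ahead only if (a) $(i,\ell)$ still has stock and (b) an independent coin with resource- and type-specific acceptance probability $\beta_{ijt}^\ell$ succeeds; otherwise the order is routed to the virtual warehouse. The events for a single resource then form an independent-arrivals sequence with fulfillment rates $p_{ijt}^\ell$ and capacity $c_i^\ell$. Because the warehouse is chosen by the decision maker and not by the customer, there is no coupling across resources in the acceptance step: each resource is an independent online single-resource allocation problem with heterogeneous rewards $r_{ij}^\ell$ and the fluid constraint $\sum p\le c$. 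The selective, profit-based scaling chooses $\beta$ by ordering the $m$ types by $r_{ij}^\ell$. High-profit types are kept at near-full rate, and low-profit types are thinned so that the consumption they generate leaves enough stock for later high-profit demand.

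\textbf{Step 3 (per-resource guarantee, main obstacle).} For the constant, I need to show that for a single resource with capacity $c\ge1$ and arbitrary time-varying rewards, the profit-scaled thinning collects at least $0.322\sum p_{jt}r_j$. My first attempt would be the layered, "magician/OCRS-style" argument. Let $q_t$ be the probability that stock remains at time $t$. Choose acceptance thresholds so that each accepted unit of flow for the top profit layer is served with probability at least $\gamma$. Then write the LP-vs-policy ratio as a minimization over adversarial orderings of flow, and solve it numerically to obtain $\gamma\approx0.322$. The key inequality would be a monotone-coupling bound $\Pr[\text{stock at }t]\ge 1-\mathbb E[\text{consumption before }t]/c$, refined by the profit ranking. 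I expect the optimization over the scaling parameters to be the main obstacle: the naive uniform-$\alpha$ choice gives only $1/4$, and the selective version must exploit the fact that low-profit flow is sacrificed first.

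\textbf{Step 4 (asymptotic bound).} For the second term, I would thin all flow on a resource uniformly by $\alpha=1-1/\sqrt{c_{\min}}$ and bound the stock-out time via Chebyshev or Bernstein. Consumption is a sum of independent Bernoullis with mean at most $\alpha c$ and variance at most $c$. Decomposing by the $m$ customer types and by time, and applying a union/averaging argument over the $m$ type-classes plus the global constraint, gives a loss of at most $(m+1)/\sqrt{c_{\min}}$ fraction of each resource's flow value. Taking the maximum with Step 3 yields the theorem.
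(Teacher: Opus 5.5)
Steps 1--2 are fine. Your time-indexed LP is a valid and tighter relaxation, although the paper's aggregated LP~\eqref{eq:fluid} already suffices.

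Step 3, however, is not yet an argument. You never specify the acceptance probabilities $\beta^\ell_{ijt}$, you derive no lower bound on sales, and $0.322$ is simply asserted as the output of an unspecified numerical minimization over ``adversarial orderings.'' The paper obtains the constant concretely. For each $(i,\ell)$ it suppresses the lowest-profit types carrying profit mass $\alpha r$ (the $h$-parameters), lets $\beta'$ be the fraction of demand removed, and splits into cases. If $\beta'>\beta$, Markov shows stock survives the whole horizon with probability at least $\beta$, giving $\beta(1-\alpha)$. Otherwise it combines $\mathbb{E}[\bar Z_i^\ell]\ge c_i^\ell(1-e^{-\mathbb{E}[Z_i^\ell]/c_i^\ell})$ (Lemma~\ref{exp min bound}) with the smallest retained per-sale profit, and uses the $\epsilon(\beta')$-shifted thresholds to equalize the two subcases at $g(\beta')\alpha r$.

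The ingredients you already list close this gap more simply and with a better constant. Drop the profit layering entirely. On a request for $(i,\ell)$ at time $t$ with stock available, accept with probability $\theta_t=\gamma/q_t$, where $q_t$ is computed by a dynamic program over the stock level of $(i,\ell)$. By induction every request is then served with probability exactly $\gamma$, so expected consumption before $t$ is at most $\gamma c_i^\ell$. Your Markov inequality then gives $q_t\ge 1-\gamma$, so $\gamma=1/2$ makes every $\theta_t\le 1$ and yields $\tfrac12\,\text{LP}(\boldsymbol c)$. The $1/4$ barrier you cite comes from a time-invariant scaling factor analyzed through the probability of never stocking out. It does not come from treating customer types uniformly.

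Step 4 fails as stated. With one shared capacity and uniform thinning by $1-1/\sqrt{c_{\min}}$, the slack $\sqrt{c}$ is of the same order as the standard deviation of consumption. Chebyshev is therefore essentially vacuous, and Bernstein gives only a constant stockout probability. More fundamentally, bounding the number of lost units says nothing about which units are lost.

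A concrete counterexample: take one resource with capacity $c$, early profit-$1$ demand of fluid mass $c-\delta$ spread over many periods, and a last-period type arriving with probability $\delta$ and profit $R$, where $R\delta\gg c$. After thinning, early consumption is roughly Poisson with mean $c-\sqrt c$. It exhausts the stock with probability about $\mathbb{P}(N(0,1)\ge 1)\approx 0.16$ before the valuable customer arrives. The ratio therefore stays near $0.84$ for every large $c$, even though $m=2$ and the claimed bound tends to $1$.

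The missing idea is per-type reservation. The paper scales the fluid solution by $1-m/\sqrt{c_{\min}}$, which frees exactly enough room for virtual capacities $c^\ell_{ij}=\lfloor \hat y^\ell_{ij}+\sqrt{c_{\min}}\rfloor$ with $\sum_j c^\ell_{ij}\le c_i^\ell$. It then applies Lemma~\ref{main ineq} separately to each $(i,j,\ell)$. Each type's loss is thus at most a $1/(2\sqrt{c_{\min}})$ fraction of its own flow (for $c_{\min}\ge 4$), regardless of arrival order. The $m$ slacks of size $\sqrt{c_{\min}}$ are exactly where the factor $m$ in the bound comes from.
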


As a reminder, we work in the relaxed setting with added virtual warehouse $L+1$, and both approximation guarantees are with respect to $J^*_1(\boldsymbol{c})$. Our approach consists of two key steps: (1) formulate a fluid approximation that provides an upper bound on the maximum expected profit, and (2) use its solution to design randomized inventory-agnostic policies.

\subsection{Fluid Approximation}
First, we relax the discrete stochastic problem of determining an optimal assortment and fulfillment policy into a continuous optimization problem where the decision variables represent expected flows rather than discrete decisions. Consider the following fluid approximation:
\begin{equation}\label{eq:fluid}
\text{LP}(\boldsymbol{c}) = \max_{\boldsymbol{w,y}\ge\bf{0}} \left\{ \begin{array}{l@{}l}
    \displaystyle\sum_{i\in\mathcal N}\sum_{j\in\mathcal M}\sum_{\ell\in\mathcal L_+} r^\ell_{ij}\,y^\ell_{ij}: \: & \displaystyle\sum_{j\in\mathcal M}y^\ell_{ij}\le c^\ell_i, \: \forall i\in\mathcal N,\;\ell\in\mathcal L; \\
    & \displaystyle\sum_{S\subseteq\mathcal N}\phi_{ij}(S)\,w_j(S) =\sum_{\ell\in\mathcal L_+}y^\ell_{ij}, \:  \forall i\in\mathcal N,\;j\in\mathcal M; \\
    & \displaystyle\sum_{S\subseteq\mathcal N}w_j(S)=\tau_j, \:  \forall j\in\mathcal M
\end{array} \right\}
\end{equation}

\noindent\textbf{Interpretation of Variables and Constraints.} In LP~\eqref{eq:fluid}, $y^{\ell}_{ij}$ represents the expected number of type-$j$ customers buying product $i$ served from warehouse $\ell$, and $w_j(S)$ represents the expected number of times we show assortment $S$ to type-$j$ customers. The first constraint ensures we cannot sell more in expectation than the available inventory, the second constraint ensures flow balance between expected assortment offerings and sales, and the third constraint ensures the total expected number of assortment offerings equals the expected number of arrivals for each customer type.

The number of decision variables in \eqref{eq:fluid} grows exponentially with the number of products but \eqref{eq:fluid} can be solved in polynomial time under the assortment optimization oracle assumption using column generation.  In the following lemma, proven in Appendix \ref{lpbound}, we formally establish that this LP provides an upper bound on the optimal expected profit.

\begin{lemma}\label{lem:lpbound}
For any inventory placement $\boldsymbol{c}$, we have $J^*_1(\boldsymbol{c}) \le \textup{LP}(\boldsymbol{c})$.
\end{lemma}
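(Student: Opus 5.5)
The plan is the standard fluid-relaxation argument: take an optimal (possibly randomized, history-dependent) inventory-agnostic policy $\pi^*$ for the relaxed problem with warehouse set $\mathcal{L}_+$, and build from its expected behavior a feasible solution $(\boldsymbol{w},\boldsymbol{y})$ of LP~\eqref{eq:fluid} whose objective equals $J^*_1(\boldsymbol{c})$. Since $\text{LP}(\boldsymbol{c})$ is a maximum over feasible solutions, the inequality follows.

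\textbf{Construction.} Fix $\pi^*$ and let $A_{jt}$ be the event that a type-$j$ customer arrives in period $t$. Set
\[
w_j(S)=\sum_{t\in\mathcal T}\Pr\bigl(A_{jt},\ \pi^* \text{ offers } S \text{ in period } t\bigr),
\]
with the convention that, whenever $\pi^*$ offers nothing to an arriving customer, this counts as offering $S=\emptyset$. Set
\[
y^\ell_{ij}=\sum_{t\in\mathcal T}\Pr\bigl(A_{jt},\ \text{product } i \text{ is chosen and routed to } \ell\text{ in period } t\bigr),
\]
for every $\ell\in\mathcal L_+$. Nonnegativity is immediate.

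\textbf{Constraints.}
\begin{itemize}
\item \emph{Arrival constraint.} Conditional on $A_{jt}$, the policy offers exactly one (possibly empty, possibly randomized) assortment. Hence $\sum_S w_j(S)=\sum_t\lambda_{jt}=\tau_j$.
\item \emph{Flow-balance constraint.} Condition on the history $\mathcal H_t$ up to the offer in period $t$. Arrivals and choices are independent of the past, and the offered $S$ is $\mathcal H_t$-measurable. So the conditional probability that $i$ is chosen, given $A_{jt}$ and offer $S$, is exactly $\phi_{ij}(S)$. Every chosen product is routed to some warehouse in $\mathcal L_+$, since declining corresponds to the virtual warehouse. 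By the tower property, $\sum_{\ell\in\mathcal L_+}y^\ell_{ij}=\sum_S\phi_{ij}(S)w_j(S)$.
\item \emph{Inventory constraint.} For a real warehouse $\ell\in\mathcal L$, feasibility of the policy ensures that the total number of units of $i$ shipped from $\ell$ along every sample path is at most $c^\ell_i$. Taking expectations gives $\sum_j y^\ell_{ij}\le c^\ell_i$. No constraint is needed for $L+1$.
\end{itemize}

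\textbf{Objective.} The expected profit of $\pi^*$ is $\sum_t\mathbb E[r^{\ell}_{ij}\mathbf 1\{\cdot\}]$, summed over all realized sales. Linearity of expectation shows this equals $\sum_{i,j,\ell}r^\ell_{ij}y^\ell_{ij}$, including the zero-profit virtual terms. Hence $J^*_1(\boldsymbol{c})\le\text{LP}(\boldsymbol{c})$.

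\textbf{Main obstacle.} The only delicate step is flow balance for adaptive, randomized policies: the assortment depends on the random past. This step must rely on the independence of the period-$t$ arrival and choice from $\mathcal H_t$, together with the assumption $\phi_{kj}(S)=0$ for $k\notin S$, so that sales occur only for offered products. A secondary point is handling policies that do not offer any assortment, which I address by identifying this with $S=\emptyset$, where $\phi_{ij}(\emptyset)=0$.
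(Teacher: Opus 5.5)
Your proposal is correct and follows essentially the same route as the paper's proof in Appendix~\ref{lpbound}: it defines $(\boldsymbol{w},\boldsymbol{y})$ as the expected offer and sales counts of the policy, checks the three constraints (inventory holds almost surely and hence in expectation, flow balance comes from conditioning on the offered assortment, and the offers sum to $\tau_j$), and matches the objective to the expected profit. Your explicit treatment of adaptive, history-dependent offers and of the empty assortment is a slightly more careful version of the paper's conditioning step $\mathbb{P}(I_{ijt}=1\mid W_{jt}(S)=1)=\phi_{ij}(S)$.
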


Throughout the paper, whenever we write $(\hat{\boldsymbol{w}}, \hat{\boldsymbol{y}})$, we refer to a feasible solution to \eqref{eq:fluid}. Without loss of generality, we further always assume $r^\ell_{ij} \hat{y}^\ell_{ij} \ge 0$ for all $i \in \mathcal{N},j \in \mathcal{M},\ell \in \mathcal{L}_+$; if $r^\ell_{ij} < 0$, we can set $\hat{y}^\ell_{ij} = 0$ and transfer its value to $\hat{y}^{L+1}_{ij}$, the variable corresponding to the expected sales of product $i$ to type-$j$ customers from virtual warehouse with corresponding profit of zero, without decreasing the objective value, while maintaining feasibility.

\subsection{Simple Inventory-Agnostic Assortment and Fulfillment Policy}
We prove Theorem~\ref{thm:fixed stock stationary} by constructing an inventory-agnostic probabilistic policy using a feasible solution to the fluid approximation \eqref{eq:fluid}, which we denote as $(\hat{\boldsymbol{w}}, \hat{\boldsymbol{y}})$. The policy's design follows a natural, two-stage randomization. First, upon the arrival of a type-$j$ customer, we sample an assortment $S$ from a distribution proportional to $\hat{w}_j(S)$. Second, if the customer chooses product $i$, we must select a warehouse for fulfillment. The variables $\hat{y}^\ell_{ij}$ from the fluid solution represent the target expected sales from each warehouse and serve as a natural guide for this decision; we therefore select warehouse $\ell$ with probability proportional to $\hat{y}^\ell_{ij}$.

Let $(\boldsymbol{w}^*, \boldsymbol{y}^*)$ be an optimal solution to \eqref{eq:fluid}. While to prove Theorem \ref{thm:fixed stock stationary} we simply use $(\hat{\boldsymbol{w}}, \hat{\boldsymbol{y}}) = (\boldsymbol{w}^*, \boldsymbol{y}^*)$, we use non-optimal feasible solutions when building an asymptotically optimal joint policy in Section \ref{sec:joint}. Moreover, as we show in Appendix \ref{app:simgen}, when using this policy in the case of non-stationarity, it is beneficial to use a scaled optimal solution: $(\hat{\boldsymbol{w}}, \hat{\boldsymbol{y}}) = \alpha(\boldsymbol{w}^*, \boldsymbol{y}^*)$, for a tuning parameter $\alpha \in (0, 1]$ (with a slight modification of increasing $\hat w_j(\varnothing)$ as needed to satisfy the third constraint in \eqref{eq:fluid}). Note that this scaled solution, after the modification, is still a feasible solution to \eqref{eq:fluid} but not necessarily an optimal one. Hence it is beneficial to formulate and analyze our policy in the more general case of using some feasible solution.

A key challenge in our approach is the risk of \textit{cannibalization}: myopically fulfilling an order might deplete stock that would be better reserved for a future, more profitable demand. As shown later in the paper this risk is significant in the non-stationary setting, where our policy can perform arbitrarily poorly when we choose $(\hat{\boldsymbol{w}}, \hat{\boldsymbol{y}})$ to be an unscaled optimal solution to \eqref{eq:fluid}. However, the stationarity assumption fundamentally changes the problem dynamics. It implies that the sequence of demands is probabilistically identical over time, which mitigates the risk of cannibalization, as there is no strategic benefit to reserving inventory for ``later" versus ``earlier" demands. This property allows our simple randomized policy to achieve a strong performance guarantee. We now formally define the policy.

\vspace{0.5\baselineskip}
\noindent\rule{\textwidth}{0.4pt}
\textbf{Policy 1: Simple Inventory-Agnostic Assortment and Fulfillment Policy}
\vspace{0.25\baselineskip}
\begin{itemize}[leftmargin=1.5em]
    \item \textbf{Input:} A feasible solution $(\hat{\boldsymbol{w}}, \hat{\boldsymbol{y}})$ to the fluid approximation \eqref{eq:fluid}.
    
    \item \textbf{Execution:} In each time period $t \in \mathcal{T}$, if a customer of type $j$ arrives:
        \begin{enumerate}[leftmargin=*,label=\arabic*.]
            \item \textbf{Offer Assortment:} Sample an assortment $\hat{S} \sim q_j$, where $q_j$ is the distribution over subsets $S \subseteq \mathcal{N}$ defined by $q_j(S) = \hat{w}_j(S)/\tau_j$, and offer $\hat{S}$ to the customer.
            
            \item \textbf{Fulfill Order:} If the customer chooses product $i \in \hat{S}$:
                \begin{enumerate}[leftmargin=*,label=(\alph*)]
                    \item \textbf{Select Warehouse:} Sample a warehouse $\ell \in \mathcal{L}_+$ from the fulfillment distribution $\rho_{ij}$ over $\mathcal{L}_+$ defined by:
                    \[ 
                    \rho_{ij}(\ell) = \frac{\hat{y}^\ell_{ij}}{\sum_{\ell' \in \mathcal{L}_+} \hat{y}^{\ell'}_{ij}}. 
                    \]
                    \item \textbf{Attempt Sale:} If the selected warehouse $\ell$ has a positive stock level for product $i$, fulfill the order. Otherwise, a stockout occurs, sell from the virtual warehouse instead.
                \end{enumerate}
        \end{enumerate}
\end{itemize}
\noindent\rule{\textwidth}{0.4pt}
\vspace{0.5\baselineskip}

\noindent Let us first analyze the expected demand generated by the policy. We define $Z^\ell_{ij}$ as the random variable for the total demand generated for product $i$ from (non-virtual) warehouse $\ell$ by type-$j$ customers over the selling horizon. Let $Z^\ell_i = \sum_{j \in \mathcal{M}} Z^\ell_{ij}$ be the total demand for product $i$ at warehouse $\ell$, which we call the \emph{$(i,\ell)$-demand}. Analogously, we let $\bar{Z}^\ell_{ij}$ denote the number of units of product $i$ actually sold to type-$j$ customers from warehouse $\ell$, and $\bar{Z}^\ell_i = \sum_{j \in \mathcal{M}} \bar{Z}^\ell_{ij}$ the total \emph{$(i,\ell)$-sales}; the fulfillment guarantees throughout this section bound $\bar{Z}^\ell_i$ from below, and we use this $(i,\ell)$-demand and $(i,\ell)$-sales shorthand from here on. The following lemma, proven in Appendix \ref{exp_dem_match} shows that the expected values of these random variables match the variables in the fluid solution. This result holds for both stationary and non-stationary arrivals.

\begin{lemma}[Expected Demand]\label{lem:expected_demand_match}
For any $i \in \mathcal{N}, j \in \mathcal{M}, \ell \in \mathcal{L}$, the expected demand under Policy 1 is given by $\mathbb{E}[Z^\ell_{ij}] = \hat{y}^\ell_{ij}$. Consequently, the total expected demand for product $i$ at warehouse $\ell$ is $\mathbb{E}[Z^\ell_i] = \sum_{j \in \mathcal{M}} \hat{y}^\ell_{ij}$.
\end{lemma}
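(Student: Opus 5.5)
The plan is to write $Z^\ell_{ij}$ as a sum of per-period indicators and compute each expectation directly from the two-stage randomization in Policy 1. Demand here means the demand \emph{generated}: the warehouse selected in step 2(a), before any stockout check. The random stock levels therefore never enter, so the computation holds for stationary and non-stationary arrivals alike.

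For each period $t$, let $D^\ell_{ijt}$ be the indicator of three joint events: a type-$j$ customer arrives in period $t$, she chooses product $i$ from the sampled assortment $\hat S$, and the fulfillment distribution $\rho_{ij}$ selects warehouse $\ell$. Then $Z^\ell_{ij}=\sum_{t\in\mathcal T} D^\ell_{ijt}$. The offered assortment distribution $q_j$ and the fulfillment distribution $\rho_{ij}$ are fixed in advance and do not depend on the state. Conditioning in sequence (arrival, then assortment, then choice, then warehouse) therefore gives
\[
\mathbb{E}[D^\ell_{ijt}] \;=\; \lambda_{jt}\sum_{S\subseteq\mathcal N}\frac{\hat w_j(S)}{\tau_j}\,\phi_{ij}(S)\,\frac{\hat y^\ell_{ij}}{\sum_{\ell'\in\mathcal L_+}\hat y^{\ell'}_{ij}} .
\]

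By the second constraint of \eqref{eq:fluid}, $\sum_S\phi_{ij}(S)\hat w_j(S)=\sum_{\ell'\in\mathcal L_+}\hat y^{\ell'}_{ij}$, so the sum over $S$ cancels the denominator of $\rho_{ij}(\ell)$. This leaves $\mathbb{E}[D^\ell_{ijt}]=\lambda_{jt}\hat y^\ell_{ij}/\tau_j$. Summing over $t$ and using $\tau_j=\sum_t\lambda_{jt}$, linearity of expectation gives $\mathbb{E}[Z^\ell_{ij}]=\hat y^\ell_{ij}$. Summing over $j$ gives the statement for $Z^\ell_i$.

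The only delicate points are degenerate cases where a denominator vanishes.
\begin{itemize}
\item If $\tau_j=0$, type $j$ never arrives, so $Z^\ell_{ij}=0$. The third constraint forces $\hat w_j\equiv0$, and then the second constraint together with nonnegativity forces $\hat y^\ell_{ij}=0$.
\item If $\sum_{\ell'}\hat y^{\ell'}_{ij}=0$, the flow-balance constraint gives $\sum_S\phi_{ij}(S)\hat w_j(S)=0$. Product $i$ is then chosen by type $j$ with probability zero, so $\rho_{ij}$ never needs to be defined and both sides equal zero.
\end{itemize}
I expect no step to be a genuine obstacle. The main care is stating clearly that $Z^\ell_{ij}$ counts demand routed to $\ell$ irrespective of stockouts. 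That is what makes the per-period probabilities state-independent, and the argument rests on it.
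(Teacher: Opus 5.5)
Your proposal is correct and follows the paper's proof. Like the paper, you write the expected demand as a sum over periods of $\lambda_{jt}$ times the assortment, choice, and routing probabilities, cancel the routing denominator using the flow-balance constraint of \eqref{eq:fluid}, and sum using $\tau_j=\sum_t\lambda_{jt}$; your explicit handling of the degenerate cases $\tau_j=0$ and $\sum_{\ell'}\hat y^{\ell'}_{ij}=0$ is a small addition the paper leaves implicit.
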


\subsection {Proof of Theorem \ref{thm:fixed stock stationary}}
We now analyze the performance of Policy 1 under the stationarity assumption, i.e., $\lambda_{jt} = \lambda_j$ for all $t \in \mathcal{T}$. This assumption is pivotal because it implies that the demand process is probabilistically uniform over time, which greatly simplifies the analysis of expected profit lost to stockouts.

\subsubsection{Preliminary Lemmas}\label{prelem}
Since total expected profit decomposes into sum of expected profits from each product-warehouse pair, our analysis is going to focus on evaluating and bounding such expected profit. Let $\bar{Z}^\ell_{ij}$ be the random variable for the total number of units of product $i$ sold to type-$j$ customers from warehouse $\ell$. The total realized profit from this pair $(i,\ell)$ is the random variable $\bar{R}^\ell_i = \sum_{j \in \mathcal{M}} r^\ell_{ij} \bar{Z}^\ell_{ij}$. The following lemma provides an exact expression for $\mathbb{E}[\bar{R}^\ell_i]$, which is crucial for providing performance guarantees.

\begin{lemma}[Expected Profit Under Stationarity]\label{lem:expected_profit_formula}
Under the stationarity assumption, whenever $\mathbb{E}[Z_i^\ell]>0$, the expected total realized profit from product $i$ at warehouse $\ell$ is given by
\[
\mathbb{E}[\bar{R}^\ell_i] = \left(\sum_{j \in \mathcal{M}} r^\ell_{ij}\hat{y}^\ell_{ij}\right) \left( 1 - \frac{\mathbb{E}[(Z^\ell_i - c^\ell_i)^+]}{\mathbb{E}[Z^\ell_i]}\right).
\]
\end{lemma}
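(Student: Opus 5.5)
Under stationarity, the $(i,\ell)$-demand arrives as a sequence of i.i.d.\ per-period Bernoulli events, and each unit of demand carries a type label that is i.i.d.\ and independent of the timing. The whole proof rests on this fact; once it is in place, the formula reduces to counting how many demands are fulfilled.

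First I will describe the per-period structure. In period $t$, Policy 1 generates an $(i,\ell)$-demand from a type-$j$ customer with probability
\[
p_{ij}^\ell=\lambda_j\sum_S q_j(S)\phi_{ij}(S)\rho_{ij}(\ell).
\]
This probability does not depend on $t$. Using the second constraint of \eqref{eq:fluid}, it equals $\hat y^\ell_{ij}/T$, which is consistent with Lemma~\ref{lem:expected_demand_match}. Set $p_i^\ell=\sum_j p_{ij}^\ell$. Two facts follow because periods are independent and at most one customer arrives per period. Each period produces at most one $(i,\ell)$-demand, with probability $p_i^\ell$. Conditional on a demand occurring in period $t$, its type is $j$ with probability $p_{ij}^\ell/p_i^\ell$, and this conditional type is independent of everything that happened in the other periods.

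Next I will count sales. Only $(i,\ell)$-demands consume $(i,\ell)$ inventory, since other products and warehouses use separate stocks and the virtual warehouse has unlimited stock. Hence the $(i,\ell)$-sales are exactly the first $\min(Z_i^\ell,c_i^\ell)$ demands for $(i,\ell)$. I will write
\[
\bar R_i^\ell=\sum_t\sum_j r^\ell_{ij}\,\mathbf 1\{\text{type-}j\ (i,\ell)\text{-demand at }t\}\,\mathbf 1\{\text{fewer than }c_i^\ell\ (i,\ell)\text{-demands before }t\}.
\]
The second indicator depends only on periods before $t$, so it is independent of what happens at $t$. Taking expectations gives
\[
\mathbb E[\bar R_i^\ell]=\sum_j r_{ij}^\ell\,\frac{p_{ij}^\ell}{p_i^\ell}\,\sum_t p_i^\ell\,\Pr(\text{fewer than }c_i^\ell\text{ demands before }t).
\]
The inner sum is exactly $\mathbb E[\min(Z_i^\ell,c_i^\ell)]$, by applying the same decomposition with every reward set to $1$. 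Now substitute $\mathbb E[\min(Z,c)]=\mathbb E[Z]-\mathbb E[(Z-c)^+]$ together with $p_{ij}^\ell/p_i^\ell=\hat y_{ij}^\ell/\sum_{j'}\hat y_{ij'}^\ell=\hat y_{ij}^\ell/\mathbb E[Z_i^\ell]$. This yields
\[
\Bigl(\sum_j r_{ij}^\ell\hat y_{ij}^\ell\Bigr)\Bigl(1-\tfrac{\mathbb E[(Z_i^\ell-c_i^\ell)^+]}{\mathbb E[Z_i^\ell]}\Bigr),
\]
which is the claimed formula. The hypothesis $\mathbb E[Z_i^\ell]>0$ guarantees $p_i^\ell>0$, so all the divisions are well defined.

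The main obstacle is justifying that the type of the demand at period $t$ is independent of whether stock remains at $t$. This must be argued carefully. The stock level depends on demands for other product--warehouse pairs only through the inventory at $(i,\ell)$, and that inventory is driven solely by $(i,\ell)$-demand in earlier periods. The policy's randomization (assortment and warehouse sampling) does not depend on the current state. Finally, stationarity is what makes the conditional type distribution $p_{ij}^\ell/p_i^\ell$ the same in every period, so it can be factored out of the sum over $t$. In the non-stationary case this factoring fails, which is why the formula is stated only under stationarity.
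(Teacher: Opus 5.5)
Your proof is correct, and it reaches the formula by a different decomposition than the paper's.

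The paper writes $\mathbb{E}[\bar R^\ell_i]$ as expected potential profit minus expected lost profit. It computes the lost part by conditioning on $Z^\ell_i=k$ and on the exact ordered times of the $k-c^\ell_i$ unfulfilled demands. It then argues that, under this conditioning, the customer type at each such time is still distributed as $\hat y^\ell_{ij}/\sum_{j'}\hat y^\ell_{ij'}$. So each lost demand costs on average $\sum_j r^\ell_{ij}\hat y^\ell_{ij}/\mathbb{E}[Z^\ell_i]$, and the total loss is this amount times $\mathbb{E}[(Z^\ell_i-c^\ell_i)^+]$.

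You work on the sales side instead. You sum over periods with the availability indicator $\mathbf{1}\{\text{fewer than } c^\ell_i \text{ demands before } t\}$, which depends only on periods $1,\dots,t-1$. You therefore need only that period $t$'s randomness is independent of the past, not a conditional-independence statement given the entire realized demand path. The time-invariant type mix then factors out, and the remaining sum is $\mathbb{E}[\min(Z^\ell_i,c^\ell_i)]$.

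Your route buys two things. First, the independence step is more elementary. The paper's claim that the conditioning event ``reveals nothing further about period $q_v$'' is true, since that event factorizes as $\{I^\ell_{i,q_v}=1\}$ intersected with an event on the other periods, but it is argued informally. Second, your route makes explicit that stationarity enters only through the ratio $p^\ell_{ij}/p^\ell_i$ not depending on $t$. It also matches the availability-indicator expressions with $\mathbb{E}[A^\ell_{it}]$ that the paper uses later in Case~1 of Policy~2.

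The paper's route, in turn, gives the loss term a direct interpretation: each unfulfilled demand carries the average per-demand profit.
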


To prove this lemma, we first need to establish the conditional probability of an arriving customer's type, given that a demand occurs for product $i$ at warehouse $\ell$. Let $D_{jt}$ be the indicator that a customer of type $j$ arrives at time period $t$, and let $I^\ell_{it}$ be the indicator of a demand for product $i$ assigned to warehouse $\ell$ at time period $t$. 

\begin{claim}\label{lem:cond_prob}
Under stationarity, we have
\[
\mathbb{P}(D_{jt} = 1 \mid I^\ell_{it} = 1) = \frac{\hat{y}^\ell_{ij}}{\sum_{j' \in \mathcal{M}}\hat{y}^\ell_{ij'}}.
\]
\end{claim}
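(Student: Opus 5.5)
The plan is a direct Bayes computation in a single period $t$, using the two-stage randomization of Policy~1. I will first compute the joint probability $\mathbb{P}(D_{jt}=1, I^\ell_{it}=1)$ and then divide by $\mathbb{P}(I^\ell_{it}=1)$, obtained by summing the joint probability over $j$. Here $I^\ell_{it}=1$ means that in period $t$ a customer chose product $i$ and the sampled warehouse was $\ell$. This is the demand event, regardless of whether stock is available. Because of this, the event depends only on the arrival, the assortment sampling, the customer's choice, and the warehouse sampling in period $t$, and not on the inventory state.

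\textbf{Joint probability.} A type-$j$ customer arrives with probability $\lambda_{jt}$. Conditional on this arrival, the policy draws $\hat S\sim q_j$ independently of the past. The probability that she picks $i$ is
\[
\sum_S q_j(S)\phi_{ij}(S)=\frac{1}{\tau_j}\sum_S \hat w_j(S)\phi_{ij}(S)=\frac{1}{\tau_j}\sum_{\ell'\in\mathcal L_+}\hat y^{\ell'}_{ij},
\]
where the last step uses the second constraint of \eqref{eq:fluid}. She is then routed to $\ell$ with probability $\rho_{ij}(\ell)=\hat y^\ell_{ij}/\sum_{\ell'}\hat y^{\ell'}_{ij}$. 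Multiplying, the normalizer cancels and
\[
\mathbb{P}(D_{jt}=1,I^\ell_{it}=1)=\lambda_{jt}\,\hat y^\ell_{ij}/\tau_j .
\]
If $\sum_{\ell'}\hat y^{\ell'}_{ij}=0$, then $i$ is chosen with probability zero and both sides of this identity are $0$, so the identity still holds. This calculation is the same one that underlies Lemma~\ref{lem:expected_demand_match}.

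\textbf{Use stationarity.} With $\lambda_{jt}=\lambda_j$, we have $\tau_j=T\lambda_j$, so $\lambda_{jt}/\tau_j=1/T$, the same for every $j$ with $\lambda_j>0$. Types with $\lambda_j=0$ have $\hat w_j\equiv 0$ and hence $\hat y_{ij}^\ell=0$, so both sides of the identity vanish for them. Summing over $j$ gives $\mathbb{P}(I^\ell_{it}=1)=\frac1T\sum_{j'}\hat y^\ell_{ij'}$, which is positive under the standing assumption $\mathbb{E}[Z^\ell_i]>0$. Taking the ratio yields the claim.

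\textbf{Main obstacle.} The step needing care is the cancellation $\lambda_{jt}/\tau_j=1/T$. This is exactly where stationarity enters, since without it the ratio would weight types by $\lambda_{jt}/\tau_j$. A second point is to be explicit that $I^\ell_{it}$ records demand rather than sales, so that the conditioning event is independent of the inventory state in period $t$. Otherwise, conditioning on a sale would introduce dependence on past periods.
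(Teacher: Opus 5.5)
Your proposal is correct and follows essentially the same route as the paper: compute the joint probability from the policy's two-stage randomization, use the flow-balance constraint and $\tau_j = T\lambda_j$ to reduce it to $\hat{y}^\ell_{ij}/T$, sum over types to get the marginal, and take the ratio. Your handling of the degenerate cases ($\lambda_j=0$, $\sum_{\ell'}\hat{y}^{\ell'}_{ij}=0$, and positivity of the denominator via $\mathbb{E}[Z^\ell_i]>0$) is something the paper leaves implicit.
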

The proof of this claim can be found in Appendix \ref{claim3.6}
\begin{proof}[Proof of Lemma~\ref{lem:expected_profit_formula}]
In this proof, we treat product $i$ and warehouse $\ell$ as fixed and, for notational convenience, whenever we say ``demand'',``sale(s)'', or ``profit'', we only consider demand/sale(s) for product $i$ from warehouse $\ell$. Before we begin our proof, we define the following random variables, which are necessary for our proof.
\begin{itemize}
    \item $\bar{Z}^\ell_i = \sum_{j \in \mathcal{M}} \bar{Z}^\ell_{ij}$ is the total sales.
    \item $R^\ell_i$ is the total \textit{potential} profit if all demand is met.
    \item $R^\ell_{i,t}$ is the \textit{potential} profit from a demand at time period $t$. If the customer arriving at time period $t$ is of type $j$, the value of this variable is $r^{\ell}_{ij}$.
    \item $N^\ell_{ik}$ for $k=1, \dots, T$ is the time period at which the $k$-th demand occurs. If total demand is less than $k$, we set $N^\ell_{ik} = +\infty$.
\end{itemize}
Our goal is to find an expression for $\mathbb{E}[\bar{R}^\ell_i]$. We express the total realized profit as the total potential profit minus the total profit lost from demands that occur after inventory is depleted:
\[
\mathbb{E}[\bar{R}^\ell_i] = \mathbb{E}[R^\ell_i] - \mathbb{E}[\text{Total Lost Profit}].
\]
From Lemma~\ref{lem:expected_demand_match}, the expected potential profit is $\mathbb{E}[R^\ell_i] = \sum_{j \in \mathcal{M}} r^\ell_{ij} \mathbb{E}[Z^\ell_{ij}] = \sum_{j \in \mathcal{M}} r^\ell_{ij} \hat{y}^\ell_{ij}$. The main challenge is to evaluate the expected total lost profit. A loss only occurs if the total demand $Z^\ell_i$ exceeds the capacity $c^\ell_i$. Writing $k = Z^\ell_i$ for the realized total demand, when $k > c^\ell_i$ the demands from customers arriving at time periods $N^\ell_{i,c^\ell_i+1}, \dots, N^\ell_{ik}$ are unfulfilled. By the law of total expectation,
\[
\mathbb{E}[\text{Total Lost Profit}] = \sum_{k=c^\ell_i+1}^T \mathbb{P}(Z^\ell_i=k) \cdot \mathbb{E}\left[ \sum_{v=c^\ell_i+1}^k R^\ell_{i,N^\ell_{iv}} \biggm| Z^\ell_i=k \right].
\]
To evaluate the conditional expectation, we further condition on the exact ordered set of time periods $Q_k = \{q_{c^\ell_i+1}, \dots, q_k\}$ at which these unfulfilled demands occur. Let the event that these specific times occurred be $\mathcal{E}_{Q_k} := \{N^\ell_{iv}=q_v\}_{v=c^\ell_i+1}^k$. By the law of total expectation, averaging over all admissible orderings $Q_k$, we then have:
\begin{equation}\label{sum}
\mathbb{E}\left[ \sum_{v=c^\ell_i+1}^k R^\ell_{i,N^\ell_{iv}} \biggm| Z^\ell_i=k \right] = \sum_{Q_k} \mathbb{P}(\mathcal{E}_{Q_k} \mid Z^\ell_i=k) \sum_{v=c^\ell_i+1}^k \mathbb{E}\left[ R^\ell_{i,q_v} \mid Z^\ell_i=k, \mathcal{E}_{Q_k} \right].
\end{equation}
Under our policy and the stationarity assumption, the demand at any time period $t$ is an independent Bernoulli trial. As shown in the proof of Claim~\ref{lem:cond_prob}, the success probability is constant, $p = \mathbb{P}(I^\ell_{it}=1) = \frac{1}{T}\sum_{j \in \mathcal{M}} \hat{y}^\ell_{ij}$. The event $Z^\ell_i=k$ is equivalent to observing exactly $k$ successes in $T$ trials. Intuitively, because these trials are independent, conditioning on the total number of successes and the locations of \textit{all} successes does not alter the properties of a specific success. We now formalize this intuition and prove that for any $q_v \in Q_k$, the following equality holds:
\[
\mathbb{E}\left[R^\ell_{i,q_v} \mid Z^\ell_i=k, \mathcal{E}_{Q_k} \right] = \mathbb{E}\left[R^\ell_{i,q_v} \mid I^\ell_{i,q_v}=1 \right].
\]
To prove this equality rigorously, note that 
\[
\mathbb{E}\left[R^\ell_{i,q_v} \mid Z^\ell_i=k, \mathcal{E}_{Q_k} \right] = \sum_{j \in \mathcal{M}} r^\ell_{ij} \cdot \mathbb{P}(D_{j,q_v}=1 \mid Z^\ell_i=k, \mathcal{E}_{Q_k}).
\]
The event $\{Z^\ell_i=k, \mathcal{E}_{Q_k}\}$ tells us that a demand for product $i$ from warehouse $\ell$ occurs at time period $q_v$, i.e., $I^\ell_{i,q_v}=1$; it reveals nothing further about time period $q_v$, and any information it carries about the other time periods is irrelevant to the customer's type at $q_v$. Indeed, because the trials are independent across time periods and the customer's type at time period $q_v$ depends only on the random events at that time period, only the event $I^\ell_{i,q_v}=1$ is relevant to the distribution of the customer's type at $q_v$. Hence, we have
\[
\mathbb{P}(D_{j,q_v}=1 \mid Z^\ell_i=k, \mathcal{E}_{Q_k}) = \mathbb{P}(D_{j,q_v}=1 \mid I^\ell_{i,q_v}=1).
\] Substituting this back into the expectation gives:
\begin{align*}
\mathbb{E}\left[R^\ell_{i,q_v} \mid Z^\ell_i=k, \mathcal{E}_{Q_k} \right] &= \sum_{j \in \mathcal{M}} r^\ell_{ij} \cdot \mathbb{P}(D_{j,q_v}=1 \mid I^\ell_{i,q_v}=1) \\
&= \mathbb{E}\left[R^\ell_{i,q_v} \mid I^\ell_{i,q_v}=1 \right],
\end{align*} as desired.
This conditional expectation is the expected potential profit from a single demand, which, from Claim~\ref{lem:cond_prob}, is a constant value independent of time period:
\[
\mathbb{E}[R^\ell_{i,q_v} \mid I^\ell_{i,q_v}=1] = \sum_{j \in \mathcal{M}} r^\ell_{ij} \cdot \mathbb{P}(D_{j,q_v}=1 \mid I^\ell_{i,q_v}=1) =  \frac{\sum_{j \in \mathcal{M}} r^\ell_{ij}\hat{y}^\ell_{ij}}{\mathbb{E}[Z^\ell_i]}.
\]
Therefore, we have \[
\mathbb{E}\left[R^\ell_{i,q_v} \mid Z^\ell_i=k, \mathcal{E}_{Q_k} \right] = \frac{\sum_{j \in \mathcal{M}} r^\ell_{ij}\hat{y}^\ell_{ij}}{\mathbb{E}[Z^\ell_i]}.
\] Since this value is constant for each of the $k-c^\ell_i$ unfulfilled demands, the inner sum in \eqref{sum} becomes:
\[
\sum_{v=c^\ell_i+1}^k \mathbb{E}\left[ R^\ell_{i,q_v} \mid Z^\ell_i=k, \mathcal{E}_{Q_k} \right] = (k-c^\ell_i) \cdot \frac{\sum_{j \in \mathcal{M}} r^\ell_{ij}\hat{y}^\ell_{ij}}{\mathbb{E}[Z^\ell_i]}.
\]
Because this term is constant for any set of time periods $Q_k$, and since the probabilities sum to one ($\sum_{Q_k} \mathbb{P}(\mathcal{E}_{Q_k} \mid Z^\ell_i=k)=1$), the full expression simplifies to:
\[
\mathbb{E}\left[ \sum_{v=c^\ell_i+1}^k R^\ell_{i,N^\ell_{iv}} \biggm| Z^\ell_i=k \right] = (k-c^\ell_i) \cdot \frac{\sum_{j \in \mathcal{M}} r^\ell_{ij}\hat{y}^\ell_{ij}}{\mathbb{E}[Z^\ell_i]}.
\]
Substituting this back into our expression for the total expected lost profit, we get:
\begin{align*}
\mathbb{E}[\text{Total Lost Profit}] &= \sum_{k=c^\ell_i+1}^T \mathbb{P}(Z^\ell_i=k) (k-c^\ell_i) \frac{\sum_{j \in \mathcal{M}} r^\ell_{ij}\hat{y}^\ell_{ij}}{\mathbb{E}[Z^\ell_i]} \\
&= \frac{\sum_{j \in \mathcal{M}} r^\ell_{ij}\hat{y}^\ell_{ij}}{\mathbb{E}[Z^\ell_i]} \sum_{k=c^\ell_i+1}^\infty (k-c^\ell_i)\mathbb{P}(Z^\ell_i=k).
\end{align*}
The summation is the definition of $\mathbb{E}[(Z^\ell_i-c^\ell_i)^+]$. Finally, combining the potential and lost profit yields the lemma's statement:
\[
\mathbb{E}[\bar{R}^\ell_i] = \left(\sum_{j \in \mathcal{M}} r^\ell_{ij}\hat{y}^\ell_{ij}\right) - \left(\sum_{j \in \mathcal{M}} r^\ell_{ij}\hat{y}^\ell_{ij}\right)\frac{\mathbb{E}[(Z^\ell_i-c^\ell_i)^+]}{\mathbb{E}[Z^\ell_i]},
\]
which simplifies to the expression in the lemma.
\end{proof}

With the exact expression for the expected profit from Lemma~\ref{lem:expected_profit_formula}, the final step is to bound the fractional loss term $\mathbb{E}[(Z^\ell_i - c^\ell_i)^+] / \mathbb{E}[Z^\ell_i]$. We use the following three lemmas for this purpose.

\begin{lemma}[\cite{baietal2025}]\label{1e_sqrt}
Let $Z$ be a sum of independent Bernoulli random variables and let $c \ge \mathbb{E}[Z]$ be a positive integer. Then, $\mathbb{E}[(Z-c)^+] \le \frac{1}{\sqrt{c}}\mathbb{E}[Z]$.
\end{lemma}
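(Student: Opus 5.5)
We need to prove that if $Z=\sum_{t} B_t$ is a sum of independent Bernoulli variables with $\mu=\mathbb{E}[Z]\le c$, then $\mathbb{E}[(Z-c)^+]\le \mu/\sqrt{c}$. The plan is a second-moment argument, with a case split on whether $c\ge 2$ or $c=1$.

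\textbf{Second-moment bound.} First write
\[
\mathbb{E}[(Z-c)^+]\le \mathbb{E}[(Z-\mu)^+]=\tfrac12\mathbb{E}|Z-\mu|.
\]
Cauchy--Schwarz then gives
\[
\mathbb{E}[(Z-c)^+]\le \tfrac12\sqrt{\mathrm{Var}(Z)}\le \tfrac12\sqrt{\mu},
\]
since the variance of a sum of independent Bernoullis is $\sum p_t(1-p_t)\le\mu$.

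This is not yet $\mu/\sqrt c$, because it scales like $\sqrt\mu$ rather than $\mu$. The fix is a sharper one-sided estimate. Since $Z\ge 0$, on $\{Z>c\}$ we have $(Z-c)^+\le Z-\mu$ and $Z\le Z$. Writing $(Z-c)^+=Z\,\frac{(Z-c)^+}{Z}$, we bound the ratio by $\frac{(Z-c)^+}{Z}\le \frac{(Z-c)^+}{c}$. This gives $\mathbb{E}[(Z-c)^+]\le \mathbb{E}[Z(Z-c)^+]/c$, which is still messy.

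\textbf{Size-biasing.} A cleaner route is size-biasing. We have $\mathbb{E}[(Z-c)^+]=\sum_t p_t\,\mathbb{P}(Z\ge c+1\mid B_t=1)$ summed appropriately. More precisely, use
\[
\mathbb{E}[(Z-c)^+]=\sum_{k\ge c}\mathbb{P}(Z>k)\le \mathbb{E}\bigl[Z\,\mathbf 1\{Z>c\}\bigr]\cdot\tfrac{1}{?}.
\]
Concretely, $\mathbb{E}[(Z-c)^+]=\mathbb{E}[Z\mathbf 1\{Z>c\}]-c\,\mathbb{P}(Z>c)$. Conditioning on each success, $\mathbb{E}[Z\mathbf 1\{Z>c\}]=\sum_t p_t\,\mathbb{P}(Z_{-t}\ge c)$, where $Z_{-t}=Z-B_t$ is independent of $B_t$. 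This yields
\[
\mathbb{E}[(Z-c)^+]\le \sum_t p_t\Bigl(\mathbb{P}(Z_{-t}\ge c)-\tfrac{c}{\mu}\mathbb{P}(Z>c)\Bigr),
\]
and it suffices to show $\max_t \mathbb{P}(Z_{-t}\ge c)-\tfrac{c}{\mu}\mathbb{P}(Z>c)\le 1/\sqrt c$.

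I would rather combine these as follows. Let $g=\mathbb{E}[(Z-c)^+]$. Then $g\le \mathbb{E}[(Z-\mu)^+]\cdot\mathbb{P}(Z\ge c+1)^{1/2}$-type estimates, via Cauchy--Schwarz on $(Z-c)^+\le (Z-\mu)\mathbf 1\{Z>c\}$:
\[
g\le \sqrt{\mathrm{Var}(Z)\,\mathbb{P}(Z>c)}\le\sqrt{\mu\,\mathbb{P}(Z>c)}.
\]
Markov gives $\mathbb{P}(Z>c)\le \mu/(c+1)$, hence
\[
g\le \mu/\sqrt{c+1}\le \mu/\sqrt c.
\]
This is clean and I expect it to work for all integers $c\ge1$. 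The key inequalities are $(Z-c)^+\le (Z-\mu)^+\mathbf 1\{Z>c\}$ (valid since $c\ge\mu$), the variance bound $\mathrm{Var}(Z)\le\mu$ for independent Bernoullis, and Markov's inequality $\mathbb{P}(Z\ge c+1)\le \mu/(c+1)$.

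\textbf{Main obstacle.} The step I would double-check is the Cauchy--Schwarz application $\mathbb{E}[(Z-\mu)\mathbf 1\{Z>c\}]\le\sqrt{\mathbb{E}(Z-\mu)^2}\sqrt{\mathbb{P}(Z>c)}$; it is elementary. Independence is used only to get $\mathrm{Var}(Z)\le\mu$. No case split on $c=1$ is needed after all.
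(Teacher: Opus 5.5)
Your final argument is correct. The paper gives no proof of this lemma; it is imported from \citet{baietal2025}. So your chain is a self-contained alternative rather than a reconstruction of the paper's proof. Writing $\mu=\mathbb{E}[Z]$,
\[
\mathbb{E}[(Z-c)^+]\le \mathbb{E}\bigl[(Z-\mu)\mathbf{1}\{Z>c\}\bigr]\le \sqrt{\mathrm{Var}(Z)\,\mathbb{P}(Z\ge c+1)}\le \sqrt{\mu\cdot\frac{\mu}{c+1}}=\frac{\mu}{\sqrt{c+1}},
\]
where each step checks out:
\begin{itemize}
\item the first inequality uses only $c\ge\mu$;
\item the second is Cauchy--Schwarz;
\item independence enters only through $\mathrm{Var}(Z)\le\mu$;
\item integrality of $Z$ and $c$ lets you apply Markov at $c+1$.
\end{itemize}

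The nearest argument the paper does write out is the proof of Lemma~\ref{main ineq}. It is also a second-moment bound, but it goes through the pointwise estimate $(Z-c)^+\le (Z-\mu)^2/\bigl(4(c-\mu)\bigr)$. Its ratio $1/\bigl(4(c-\mu)\bigr)$ depends only on the slack, which is what drives the square-root rates of Section~\ref{sec:joint}. However, it needs $c>\mu$ strictly and becomes useless as $c-\mu\to0$. That is precisely the case in Theorem~\ref{thm:fixed stock stationary}, where the fluid solution may exhaust the stock. Your Cauchy--Schwarz-plus-Markov route trades dependence on the slack for dependence on $c$ itself, and so it covers $c=\mu$.

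Your route is also more general than needed. Using $\mathbb{P}(Z>c)\le\mu/c$ instead, it gives $\mu/\sqrt{c}$ for every real $c\ge\mu$ with $c>0$. It also applies to every sum of independent $[0,1]$-valued variables, since $\mathrm{Var}(Z)\le\mathbb{E}[Z]$ still holds there.

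For calibration, the sharp value of $\sup\mathbb{E}[(Z-c)^+]/\mathbb{E}[Z]$ over Bernoulli sums with $\mathbb{E}[Z]\le c$ is $e^{-c}c^c/c!\le 1/\sqrt{2\pi c}$. It is approached by many small Bernoullis with total mean $c$. So your bound loses only a constant factor.

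In a write-up, delete the exploratory material: the abandoned case split, the line with the unspecified constant, and the size-biasing digression. The displayed chain is the whole proof.
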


\begin{lemma}\label{exp min bound}
For any inventory-agnostic policy under which the demand for product $i$ from warehouse $\ell$ arrives as a sequence of independent (but not necessarily identical) Bernoulli trials, the expected number of units of product $i$ sold from warehouse $\ell$, $\mathbb{E}[\bar{Z}_i^\ell]$, is lower-bounded by $\mathbb{E}[\bar{Z}_i^\ell] \ge c_i^\ell \left(1 - e^{-\mathbb{E}[Z_i^\ell]/c_i^\ell}\right)$.
\end{lemma}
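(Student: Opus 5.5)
The plan is to reduce the statement to a deterministic inequality about a single inventory pile. Fix $i$ and $\ell$, and write $c=c_i^\ell$, $Z=Z_i^\ell=\sum_{t=1}^T I_t$, where the $I_t$ are independent Bernoulli variables with success probabilities $p_t$. Under an inventory-agnostic policy, a demand for $(i,\ell)$ is served from warehouse $\ell$ exactly when stock remains, and otherwise goes to the virtual warehouse. Demands routed to $\ell$ therefore consume one unit each until the $c$ units run out, and we get the pathwise identity $\bar Z_i^\ell=\min(Z,c)$. The claim thus becomes the purely probabilistic statement
\[
\mathbb{E}[\min(Z,c)]\ \ge\ c\left(1-e^{-\mathbb{E}[Z]/c}\right),
\]
for a sum of independent, non-identical Bernoulli trials.

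Next I would rewrite the left side as $\mathbb{E}[\min(Z,c)]=\sum_{k=1}^{c}\mathbb{P}(Z\ge k)$. I would then argue by a coupling or induction over time periods that replaces each Bernoulli($p_t$) by a Poisson($p_t$) variable. Since $\mathbb{E}[\min(\cdot,c)]$ is concave and nondecreasing, and a Bernoulli($p$) is dominated in the concave order by a Poisson($p$) with the same mean, the replacement can only decrease the expectation. Concretely, conditional on the other summands, the function $x\mapsto\mathbb{E}[\min(x+W,c)]$ is concave in integer $x$, so I would apply the one-variable comparison $\mathbb{E}[g(B)]\ge\mathbb{E}[g(P)]$ for concave $g$ on the integers with $B$ Bernoulli, $P$ Poisson, and equal means. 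This follows because the Poisson law is a mean-preserving spread of the Bernoulli law supported on $\{0,1\}$: the chord of a concave $g$ between $0$ and $1$ lies above $g$ beyond $1$. Swapping the trials one at a time gives $\mathbb{E}[\min(Z,c)]\ge\mathbb{E}[\min(P_\mu,c)]$ with $P_\mu\sim\text{Poisson}(\mu)$ and $\mu=\mathbb{E}[Z]$.

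It then remains to show $\mathbb{E}[\min(P_\mu,c)]\ge c(1-e^{-\mu/c})$. Viewing $P_\mu$ as the count of a unit-rate Poisson process on $[0,\mu]$, I would differentiate in $\mu$:
\[
\frac{d}{d\mu}\mathbb{E}[\min(P_\mu,c)]=\mathbb{P}(P_\mu<c),
\]
and compare with $\frac{d}{d\mu}\,c(1-e^{-\mu/c})=e^{-\mu/c}$. Both functions vanish at $\mu=0$. The cleaner route is to set $h(\mu)=c-\mathbb{E}[\min(P_\mu,c)]=\mathbb{E}[(c-P_\mu)^+]$ and show $h(\mu)\le c\,e^{-\mu/c}$. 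Since $h'(\mu)=-\mathbb{P}(P_\mu<c)$ and $\mathbb{P}(P_\mu<c)\ge h(\mu)/c$ (because $(c-P)^+\le c\,\mathbf 1\{P<c\}$), we get $h'\le -h/c$. Grönwall's inequality then gives $h(\mu)\le h(0)e^{-\mu/c}=c\,e^{-\mu/c}$, which is the claim. Notably, this differential inequality argument works directly for the Bernoulli process as well, by treating the trials as arriving in continuous "mean time". So I might skip the Poisson comparison altogether and run the Grönwall step on the discrete process: each step with success probability $p_t$ multiplies $h$ by at most $(1-p_t/c)\le e^{-p_t/c}$. Indeed,
\[
h_{t}=h_{t-1}-p_t\,\mathbb{P}(\text{stock}>0)\le h_{t-1}(1-p_t/c),
\]
using the same inequality $\mathbb{P}(\text{remaining}>0)\ge \mathbb{E}[\text{remaining}]/c$.

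I expect this last recursion to be the main step, and I would present it as the proof: define $h_t=\mathbb{E}[\text{remaining stock after }t]$ and use independence of trial $t$ from the past to write $h_t=h_{t-1}-p_t\mathbb{P}(\text{remaining}_{t-1}>0)$. Bounding the probability by $h_{t-1}/c$ and multiplying out gives $h_T\le c\prod_t(1-p_t/c)\le c\,e^{-\mu/c}$. The only subtle point is justifying that the trial at $t$ is independent of the remaining stock. This holds because the policy's assortment and routing randomization does not depend on inventory, which is exactly the independent-Bernoulli hypothesis.
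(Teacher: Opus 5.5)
Your argument is correct, and the route you say you would present differs from the paper's.

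\paragraph{The paper's route.} The paper compares the real process with an auxiliary ``randomized bin'' process. In it, each $(i,\ell)$-demand picks one of the $c$ single-unit bins uniformly at random and sells only if that bin is still full. A coupling shows this process sells no more than the real one: pathwise, the number of distinct bins hit is at most $\min(Z,c)$. Its expected sales are computed exactly as $c\bigl(1-\prod_t(1-p_t/c)\bigr)$, and the proof finishes with AM--GM and $(1-x/T)^T\le e^{-x}$.

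\paragraph{Your route.} You work with the true process directly. You track $h_t=\mathbb{E}[R_t]$ and use $\mathbb{P}(R_{t-1}>0)\ge h_{t-1}/c$, which gives the contraction $h_t\le(1-p_t/c)\,h_{t-1}$. Chaining these inequalities is valid because $p_t\le 1\le c$ makes each factor nonnegative. This lands on exactly the same intermediate bound $c\prod_t(1-p_t/c)$ as the paper's bin surrogate. Applying $1-x\le e^{-x}$ termwise then makes the AM--GM step unnecessary.

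\paragraph{Comparison.} Your version needs no auxiliary process or coupling. It also pinpoints where independence enters: the trial at time $t$ is independent of $R_{t-1}$. The paper's version instead offers an intuitive balls-into-bins picture with a closed-form surrogate.

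The Poisson concave-order comparison and the Gr\"onwall argument you sketch first are also correct. They give the sharper intermediate bound $\mathbb{E}[\min(Z,c)]\ge\mathbb{E}[\min(P_\mu,c)]$, but the lemma does not need them.
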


\begin{lemma}\label{1e_e}
For the demand random variable $Z^\ell_i$ generated by Policy 1 with $(\boldsymbol{w}^*, \boldsymbol{y}^*)$, and for any initial stock $c_i^\ell \ge \mathbb{E}[Z_i^\ell]$, we have $\mathbb{E}[(Z^\ell_i - c^\ell_i)^+] \le \frac{1}{e} \mathbb{E}[Z^\ell_i]$.
\end{lemma}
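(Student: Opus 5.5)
The plan is to reduce the statement to Lemma~\ref{exp min bound}. Write $\mu = \mathbb{E}[Z^\ell_i]$ and $c = c^\ell_i$.

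\textbf{Step 1: realized sales equal $\min(Z^\ell_i, c)$.} Under Policy 1, every demand routed to warehouse $\ell$ for product $i$ is fulfilled as long as stock remains. Once the stock is exhausted, all later demands go to the virtual warehouse. Hence $\bar Z^\ell_i = \min(Z^\ell_i, c)$ pathwise, and so
\[
\mathbb{E}[(Z^\ell_i - c)^+] = \mathbb{E}[Z^\ell_i] - \mathbb{E}[\min(Z^\ell_i, c)] = \mu - \mathbb{E}[\bar Z^\ell_i].
\]

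\textbf{Step 2: check the hypothesis of Lemma~\ref{exp min bound}.} In each period $t$, the indicator $I^\ell_{it}$ depends only on the period-$t$ arrival, the sampled assortment, the customer's choice and the sampled warehouse. These are independent across periods, so $Z^\ell_i = \sum_t I^\ell_{it}$ is a sum of independent Bernoulli variables. This holds even without stationarity, although under stationarity the variables are identically distributed with parameter $p$.

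Lemma~\ref{exp min bound} then gives $\mathbb{E}[\bar Z^\ell_i] \ge c(1 - e^{-\mu/c})$. Combining with Step 1,
\[
\mathbb{E}[(Z^\ell_i - c)^+] \le \mu - c\bigl(1 - e^{-\mu/c}\bigr) = \mu\left(1 - \frac{1 - e^{-x}}{x}\right), \qquad x = \mu/c \in (0,1],
\]
where $x \le 1$ follows from the assumption $c \ge \mu$. If $\mu = 0$, both sides of the claimed inequality are $0$, so that case is trivial.

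\textbf{Step 3: calculus.} It remains to show that $g(x) = (1 - e^{-x})/x$ is nonincreasing on $(0,1]$. Its derivative is
\[
g'(x) = \frac{e^{-x}(1+x) - 1}{x^2},
\]
which is $\le 0$ because $e^{x} \ge 1 + x$. Therefore $g(x) \ge g(1) = 1 - 1/e$, so $1 - g(x) \le 1/e$. This yields $\mathbb{E}[(Z^\ell_i - c)^+] \le \frac{1}{e}\mu$, as claimed.

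\textbf{Main obstacle.} The only nonroutine point is Step 1: justifying that sales are exactly $\min(Z,c)$, so that $\mathbb{E}[\min(Z,c)]$ is what Lemma~\ref{exp min bound} lower-bounds. This requires that the policy never declines an order routed to $(i,\ell)$ while stock remains, which holds by the definition of the ``Attempt Sale'' step. Note also that optimality of $(\boldsymbol{w}^*, \boldsymbol{y}^*)$ is not used beyond feasibility; it only enters later, when the condition $c \ge \mu$ is derived from the first LP constraint.
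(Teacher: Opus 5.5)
Your proposal is correct and follows the paper's proof. Both apply Lemma~\ref{exp min bound}, use $\mathbb{E}[\bar Z^\ell_i] = \mathbb{E}[Z^\ell_i] - \mathbb{E}[(Z^\ell_i - c^\ell_i)^+]$, and reduce to the inequality $1-e^{-x} \ge (1-1/e)x$ on $[0,1]$. The paper gets this inequality from concavity of $1-e^{-x}$ (the chord from $(0,0)$ to $(1,1-1/e)$), while you get it from monotonicity of $(1-e^{-x})/x$, which is the same fact.
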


Lemmas \ref{exp min bound} and \ref{1e_e} are closely related to Lemma 5 in \cite{rajan2025}; their proofs are provided in Appendix \ref{app:expmin} and \ref{app:1e}. We now have all the necessary components to conclude the proof of the theorem.

\subsubsection{Concluding the Proof of Theorem~\ref{thm:fixed stock stationary}}
First, as mentioned before, we set the feasible solution used in Policy 1 to be an optimal one. The total expected profit of the policy is the sum of the expected profit from each product-warehouse pair. Pairs with $\mathbb{E}[Z_i^\ell]=0$ contribute zero, so the sums below are restricted to pairs with $\mathbb{E}[Z_i^\ell]>0$. From Lemma~\ref{lem:expected_profit_formula}, we have:
\[
\mathbb{E}[\text{Total Profit}] = \sum_{i \in \mathcal{N}}\sum_{\ell \in \mathcal{L}}\left(\sum_{j \in \mathcal{M}} r^\ell_{ij}\hat{y}^\ell_{ij}\right) \left( 1 - \frac{\mathbb{E}[(Z^\ell_i - c^\ell_i)^+]}{\mathbb{E}[Z^\ell_i]}\right).
\]
We apply each tail bound separately. First, applying Lemma~\ref{1e_sqrt} provides one bound on the profit:
\[
\mathbb{E}[\text{Total Profit}] \ge \sum_{i \in \mathcal{N}}\sum_{\ell \in \mathcal{L}}\left(\sum_{j \in \mathcal{M}} r^\ell_{ij}\hat{y}^\ell_{ij}\right) \left(1 - \frac{1}{\sqrt{c_{\min}}}\right) = \left(1 - \frac{1}{\sqrt{c_{\min}}}\right) \text{LP}(\boldsymbol{c}).
\]
Second, applying Lemma~\ref{1e_e} provides another bound:
\[
\mathbb{E}[\text{Total Profit}] \ge \sum_{i \in \mathcal{N}}\sum_{\ell \in \mathcal{L}}\left(\sum_{j \in \mathcal{M}} r^\ell_{ij}\hat{y}^\ell_{ij}\right) \left(1 - \frac{1}{e}\right) = \left(1 - \frac{1}{e}\right) \text{LP}(\boldsymbol{c}).
\]
Since the policy's expected profit is guaranteed to be at least as large as both of these quantities, its performance relative to the fluid optimum is bounded below by their maximum. Given that $\text{LP}(\boldsymbol{c}) \ge J^*_1(\boldsymbol{c})$, this completes the proof.

\subsection{Proof of Theorem~\ref{thm:fixed stock general advanced}}\label{sec:proofadv}

Now we drop the stationarity assumption. In Section~\ref{sec:proofadv-partA} we construct a 0.322-approximation, while in Section~\ref{sec:proofadv-partB} we construct a $\left(1-\frac{m+1}{\sqrt{c_{\min}}}\right)$-approximation.

\subsubsection{Part A: The Constant Guarantee}\label{subsec:partA}\label{sec:proofadv-partA}
In the non-stationary setting, Policy 1 based on the optimal solution can perform arbitrarily poorly, as we demonstrate through an example in Appendix \ref{app:simgen}. There we also show that a uniform scaling of the optimal fluid solution, $(\hat{\boldsymbol{w}}, \hat{\boldsymbol{y}}) = \alpha(\boldsymbol{w}^*, \boldsymbol{y}^*)$ with $\alpha < 1$, while helping by reducing the cannibalization of high profit sales, can still provide a worst-case guarantee of only $1/4$, achieved at $\alpha = 1/2$, as shown in Lemma \ref{lem:simple_policy_bound}. This is because it reduces the target sales for high profit demands by the same factor as for low profit ones. The key idea behind our advanced policy is to be more selective. Instead of uniform scaling, we design a set of non-uniform, customer-product-specific fulfillment probabilities, $1-H^\ell_{ij}$. These factors are strategically calculated to reduce the likelihood of fulfilling low profit demands, thereby preserving scarce inventory for higher profit opportunities and leading to a better overall guarantee.

We now formalize the policy and the construction of $H^\ell_{ij}$.

\begin{sloppypar}
\vspace{0.5\baselineskip}
\noindent\rule{\textwidth}{0.4pt}
\textbf{Policy 2: Advanced Inventory-Agnostic Assortment and Fulfillment Policy}
\vspace{0.25\baselineskip}
\begin{itemize}[leftmargin=1.5em]
    \item \textbf{Input:} A feasible solution $(\hat{\boldsymbol{w}}, \hat{\boldsymbol{y}})$ to \eqref{eq:fluid} and pre-computed parameters $H^\ell_{ij} \in [0,1]$.
    \item \textbf{Execution:} In each time period $t \in \mathcal{T}$, if a customer of type $j$ arrives:
    \begin{enumerate}[leftmargin=*,label=\arabic*.]
        \item \textbf{Offer Assortment:} Sample an assortment $\hat{S} \sim q_j$, with $q_j$ the assortment distribution defined in Policy 1.
        \item \textbf{Fulfill Order:} If the customer chooses product $i \in \hat{S}$:
            \begin{enumerate}[leftmargin=*,label=(\alph*)]
                \item \textbf{Select Warehouse:} Choose a warehouse $\ell \in \mathcal{L}$ with probability $p_\ell$ given by:
                \[ p_\ell = \rho_{ij}(\ell)\,(1 - H^\ell_{ij}), \qquad \rho_{ij}(\ell) = \frac{\hat{y}^\ell_{ij}}{\sum_{\ell' \in \mathcal{L}_+} \hat{y}^{\ell'}_{ij}}. \]
                With the remaining probability $1-\sum_{\ell \in \mathcal{L}} p_\ell$, select the virtual warehouse.
                \item \textbf{Attempt Sale:} If the selected warehouse $\ell$ has a positive stock level for product $i$, fulfill the order. Otherwise, a stockout occurs, sell from the virtual warehouse instead.
            \end{enumerate}
    \end{enumerate}
\end{itemize}
\noindent\rule{\textwidth}{0.4pt}
\vspace{0.5\baselineskip}
\end{sloppypar}

\paragraph{Construction of the Scaling Parameters \texorpdfstring{$H^\ell_{ij}$}{H}.} Parameters $H^\ell_{ij} \in [0,1]$ are computed independently for each product-warehouse pair $(i,\ell) \in \mathcal{N} \times \mathcal{L}$. For a fixed pair $(i,\ell)$, we first order the customer types $j \in \mathcal{M}$ by their profit, such that $r^\ell_{ij_1} \le r^\ell_{ij_2} \le \dots \le r^\ell_{ij_m}$. For notational convenience, let $r = \sum_{k=1}^m r^\ell_{ij_k} \hat{y}^\ell_{ij_k}$ be the expected total potential profit  and $y = \sum_{k=1}^m \hat{y}^\ell_{ij_k}$ be the expected total demand for $(i,\ell)$ in the fluid solution. If $r=0$, the desired lower bound is zero and the claim is immediate; therefore, we may assume $r,y>0$. The construction uses two constants, $\alpha=0.3848$ and $\beta=0.524$, chosen to approximately maximize the resulting worst-case guarantee, as discussed at the end of this subsection.

First, we define a set of intermediate scaling parameters $h^\ell_{ij}$. Let $u$ be the smallest index for which the cumulative expected potential profit exceeds a fraction $\alpha$ of the total, i.e., $u = \min\{k \in \{1,\dots,m\} \mid \sum_{s=1}^k r^\ell_{ij_s} \hat{y}^\ell_{ij_s} > \alpha r\}$. We then set:
\[
h^\ell_{ij_k} = 
\begin{cases}
    1 & \text{if } k < u \\
    \frac{\alpha r - \sum_{s=1}^{u-1} r^\ell_{ij_s}\hat{y}^\ell_{ij_s}}{r^\ell_{ij_u}\hat{y}^\ell_{ij_u}} & \text{if } k = u \\
    0 & \text{if } k > u.
\end{cases}
\]
By design, these $h$-parameters precisely nullify the fulfillment probabilities for the least profitable demands such that their potential profit sums to $\alpha r$. To be precise, setting $H^\ell_{ij} = h^\ell_{ij}$ would scale the expected total potential profit exactly by a factor of $1-\alpha$ while reducing total expected demand as much as possible.

The three cases below split according to how much low-profit demand the scaling step suppresses. Recall that the $h$-parameters nullify the fulfillment probabilities for the least profitable demands, i.e., those below the profit threshold $u$ (the cutoff index at which the cumulative potential profit first reaches $\alpha r$); writing $\sum_{k=1}^m h^\ell_{ij_k}\hat{y}^\ell_{ij_k} = \beta'y$, the fraction $\beta'$ measures how much of the total expected demand this suppression accounts for. When $\beta'>\beta$, the suppression is large, so little demand survives relative to capacity and a stockout argument alone gives the guarantee. When $\beta'\le\beta$, the guarantee instead comes from the profit of the surviving demand: this profit is high enough on its own in one regime, whereas in the other we must lower the threshold to a smaller index $u'\le u$ and suppress demand in an all-or-nothing way, fully dropping the customer types below $u'$ and fully keeping the rest. These three regimes are formalized as Cases~1, 2, and~3 below.

It remains to define the final fulfillment-suppression parameters $H^\ell_{ij} \in [0,1]$, where $H^\ell_{ij}=1$ suppresses type $j$ for pair $(i,\ell)$ (its fulfillment probability is set to zero), $H^\ell_{ij}=0$ keeps it, and intermediate values suppress it proportionally; Cases~1 and~2 set $H^\ell_{ij}=h^\ell_{ij}$, which is in general fractional, while only Case~3 yields integral $H^\ell_{ij}\in\{0,1\}$. The two constants $\alpha=0.3848$ and $\beta=0.524$ are the fixed numbers introduced in the construction of the $h$-parameters above, and $\beta' = \tfrac{1}{y}\sum_{k=1}^m h^\ell_{ij_k}\hat{y}^\ell_{ij_k}$ is the fraction of the expected $(i,\ell)$-demand that those $h$-parameters already suppress. When $\beta'>\beta$, enough demand is suppressed on its own, and we keep $H^\ell_{ij}=h^\ell_{ij}$. When $\beta'\le\beta$, we instead decide which types to keep by comparing each type's per-sale profit $r^\ell_{ij}$ to a profit threshold and dropping the types that fall below it. The reference threshold is $\frac{\alpha r}{\beta' y}$, the average profit per unit of the demand suppressed by the $h$-parameters (a profit mass $\alpha r$ spread over a demand mass $\beta' y$). Whether the profit $r^\ell_{ij_u}$ of the type $j_u$ lies above or below this threshold determines whether we keep the $h$-based choice (Case 2) or re-select the cutoff (Case 3); to make these two options certify the same guarantee, we offset the threshold by the factors $1\pm\epsilon(\beta')$ built from the auxiliary functions
\[
\psi(x) = \frac{1-e^{x-1}}{1-e^{x/2-1}}, \qquad \epsilon(x) = \frac{1-\psi(x)}{1+\psi(x)}.
\]
Case 2 uses the raised threshold $(1+\epsilon(\beta'))\frac{\alpha r}{\beta'y}$ and Case 3 the lowered threshold $(1-\epsilon(\beta'))\frac{\alpha r}{\beta'y}$; in Case 3 we set $H^\ell_{ij_k}=1$ for exactly those low-profit types $j_k$ whose profit lies below the lowered threshold, and $H^\ell_{ij_k}=0$ for the rest. The offset $\epsilon(\beta')$ is chosen precisely so that the two resulting profit bounds, $(1+\epsilon(\beta'))\frac{\alpha r}{\beta'}(1-e^{\beta'-1})$ and $(1-\epsilon(\beta'))\frac{\alpha r}{\beta'}(1-e^{\beta'/2-1})$, are equal; their common value is $g(\beta')\,\alpha r$, where $g(x) = \frac{2}{1+\psi(x)}\frac{1-e^{x-1}}{x}$. We now make these three regimes precise as Cases~1, 2, and~3.
\vspace{0.2cm}

\noindent\textit{Case 1:} If $\beta' > \beta$, we set $H^\ell_{ij} = h^\ell_{ij}$ for all $j \in \mathcal{M}$.
\vspace{0.2cm}
    
\noindent\textit{Case 2:} If $\beta' \le \beta$ and $r^\ell_{ij_u} \ge (1+\epsilon(\beta'))\frac{\alpha r}{\beta'y}$, we set $H^\ell_{ij} = h^\ell_{ij}$ for all $j \in \mathcal{M}$.
\vspace{0.2cm}
    
\noindent\textit{Case 3:} If $\beta' \le \beta$ and $r^\ell_{ij_u} < (1+\epsilon(\beta'))\frac{\alpha r}{\beta'y}$, let $u'$ be the smallest index such that $r^\ell_{ij_{u'}} \ge (1-\epsilon(\beta'))\frac{\alpha r}{\beta'y}$. We set $H^\ell_{ij_k} = 1$ for $k < u'$, and $H^\ell_{ij_k} = 0$ for $k \ge u'$.

\paragraph{Performance Analysis.}
Here $Z^\ell_{ij}$ and $\bar{Z}^\ell_{ij}$ are the $(i,\ell)$-demand and $(i,\ell)$-sales from type-$j$ customers, now generated under Policy 2, in the notation introduced above. The expected profit of the policy from product-warehouse pair $(i,\ell)$ is equal to $ \sum_{j \in \mathcal{M}} r^\ell_{ij} \mathbb{E}[\bar{Z}^\ell_{ij}]$. The expected $(i,\ell)$-demand from type-$j$ customers is $\mathbb{E}[Z^\ell_{ij}] = \hat{y}^\ell_{ij}(1-H^\ell_{ij})$. We now show that for any $(i,\ell)$ the expected $(i,\ell)$ profit is lower-bounded by $0.322 \sum_{j \in \mathcal{M}} r^\ell_{ij} \hat{y}^\ell_{ij}$ by analyzing each of the three cases.

\vspace{0.5\baselineskip}
\noindent\textit{Case 1 ($ \beta' > \beta $):}
\vspace{0.2cm}

In this case, $H^\ell_{ij} = h^\ell_{ij}$. The total expected demand for $(i,\ell)$ is:
\[
\mathbb{E}[Z_i^\ell] = \sum_{j \in \mathcal{M}} \hat{y}^\ell_{ij}(1-h^\ell_{ij}) = y - \sum_{j \in \mathcal{M}} h^\ell_{ij}\hat{y}^\ell_{ij} = y(1 - \beta').
\]
Since $\beta' > \beta$ by the case condition, we have $\mathbb{E}[Z_i^\ell] < y(1-\beta)$. As the total fluid demand $y$ cannot exceed the capacity $c_i^\ell$, it follows that $\mathbb{E}[Z_i^\ell] < (1-\beta)c_i^\ell$. The probability of a stockout is therefore bounded: by Markov's inequality, $\mathbb{P}(Z_i^\ell \ge c_i^\ell) \le \frac{\mathbb{E}[Z_i^\ell]}{c_i^\ell} < 1-\beta$, which implies that the probability of not stocking out is $\mathbb{P}(Z_i^\ell < c_i^\ell) \ge \beta$.

Let $A_{it}^\ell$ be the indicator that product $i$ has non-empty stock at warehouse $\ell$ at the beginning of time period $t$. The expected sales to customer type $j$ can be expressed as:
\[ 
\mathbb{E}[\bar{Z}_{ij}^\ell] = \sum_{t \in \mathcal{T}} \lambda_{jt}\sum_{S \subseteq \mathcal{N}}\frac{\hat{w}_j(S)}{\tau_j}\phi_{ij}(S)\frac{\hat{y}^{\ell}_{ij}}{\sum_{b \in \mathcal{L}_+} \hat{y}^b_{ij}}(1-h^{\ell}_{ij}) \mathbb{E}[A^{\ell}_{it}]. 
\]
The probability of having stock at the beginning of time period $t$, $\mathbb{E}[A_{it}^\ell]$, is at least the probability of not stocking out over the entire horizon. Thus, $\mathbb{E}[A_{it}^\ell] \ge \mathbb{P}(Z_i^\ell < c_i^\ell) \ge \beta$. The whole summation in the expression for $\mathbb{E}[\bar{Z}_{ij}^\ell]$ without $\mathbb{E}[A_{it}^\ell]$ factor is equal to $\mathbb{E}[Z^\ell_{ij}]$, similarly to what we have proven before. This gives a lower bound on expected sales:
\[
\mathbb{E}[\bar{Z}^\ell_{ij}] \ge \mathbb{E}[Z^\ell_{ij}] \cdot \beta = \hat{y}^\ell_{ij}(1-h^\ell_{ij})\beta.
\]
The total expected profit for $(i,\ell)$ is therefore bounded as follows:
\begin{align*}
\sum_{j \in \mathcal{M}} r^\ell_{ij} \mathbb{E}[\bar{Z}^\ell_{ij}] &\ge \sum_{j \in \mathcal{M}} r^\ell_{ij} \hat{y}^\ell_{ij}(1-h^\ell_{ij})\beta
= \beta \left(\sum_{j \in \mathcal{M}} r^\ell_{ij}\hat{y}^\ell_{ij} - \sum_{j \in \mathcal{M}} r^\ell_{ij}h^\ell_{ij}\hat{y}^\ell_{ij}\right).
\end{align*}
By construction of the $h$-parameters, $\sum_{j \in \mathcal{M}} r^\ell_{ij}h^\ell_{ij}\hat{y}^\ell_{ij} = \alpha r = \alpha \sum_{j \in \mathcal{M}} r^\ell_{ij}\hat{y}^\ell_{ij}$. Substituting this in, we get:
\[
\sum_{j \in \mathcal{M}} r^\ell_{ij} \mathbb{E}[\bar{Z}^\ell_{ij}] \ge \beta(1-\alpha)\sum_{j \in \mathcal{M}} r^\ell_{ij}\hat{y}^\ell_{ij} \ge 0.322 \sum_{j \in \mathcal{M}} r^\ell_{ij}\hat{y}^\ell_{ij},
\]
which completes the proof for Case 1.

\vspace{0.5\baselineskip}
\noindent\textit{Case 2 ($ \beta' \le \beta $ and $ r^\ell_{ij_u} \ge (1+\epsilon(\beta'))\frac{\alpha r}{\beta'y} $):}
\vspace{0.2cm}

Here, $H^\ell_{ij} = h^\ell_{ij}$. By construction, $h^\ell_{ij_k}=1$ for all $k<u$, so the fulfillment probability for these low-profit customer types is zero. The profit is therefore derived only from customers of type $j_k$ for $k \ge u$. Since $r^\ell_{ij_k}$ is non-decreasing in $k$, the expected total profit is lower-bounded by noting that all sales generate a profit of at least $r^\ell_{ij_u}$:
\[
\sum_{j \in \mathcal{M}} r^\ell_{ij} \mathbb{E}[\bar{Z}^\ell_{ij}] = \sum_{k=u}^m r^\ell_{ij_k} \mathbb{E}[\bar{Z}^\ell_{ij_k}] \ge r^\ell_{ij_u} \sum_{k=u}^m \mathbb{E}[\bar{Z}^\ell_{ij_k}] = r^\ell_{ij_u} \mathbb{E}[\bar{Z}_i^\ell],
\]
where $\bar{Z}_i^\ell$ are total $(i,\ell)$-sales.
Using Lemma~\ref{exp min bound}, the fact that the function $f(x)=\frac{1-e^{-x}}{x}$ is decreasing, and that, like in Case 1, $\mathbb{E}\left[Z^{\ell}_i\right] = y(1-\beta')$ and therefore $\mathbb{E}\left[Z^{\ell}_i\right]/c^{\ell}_i \le 1 - \beta'$, we can bound the expected sales:
\[
\mathbb{E}[\bar{Z}_i^\ell] \ge c_i^\ell(1-e^{-\mathbb{E}[Z_i^\ell]/c_i^\ell}) \ge \mathbb{E}[Z_i^\ell] \cdot \frac{1-e^{-(1-\beta')}}{1-\beta'} = y(1-\beta')\frac{1-e^{-(1-\beta')}}{1-\beta'} = y(1-e^{\beta'-1}).
\]
Combining these results with the condition on $r^\ell_{ij_u}$ from the case definition gives:
\begin{align*}
\sum_{j \in \mathcal{M}} r^\ell_{ij} \mathbb{E}[\bar{Z}^\ell_{ij}] &\ge r^\ell_{ij_u} \cdot y(1-e^{\beta'-1}) \\
&\ge \left((1+\epsilon(\beta'))\frac{\alpha r}{\beta'y}\right) \cdot y(1-e^{\beta'-1}) \\
&= \left(\frac{1-\psi(\beta')}{1+\psi(\beta')} + 1\right)\frac{\alpha r}{\beta'} (1-e^{\beta'-1}) \\
&= \frac{2}{1+\psi(\beta')}\frac{\alpha r}{\beta'} (1-e^{\beta'-1}).
\end{align*}
Here, the second inequality follows from the case condition, and the first equality follows from the definition of $\epsilon$. The bound above can be written as $g(\beta') \cdot \alpha r$. A direct computation shows that $g(x)$ is a decreasing function for $x \in (0,1]$. Since the case condition is $\beta' \le \beta$, it follows that $g(\beta') \ge g(\beta)$. Therefore,
\[
\sum_{j \in \mathcal{M}} r^\ell_{ij} \mathbb{E}[\bar{Z}^\ell_{ij}] \ge g(\beta')\alpha r \ge g(\beta)\alpha r \ge 0.322 r = 0.322 \sum_{j \in \mathcal{M}} r^\ell_{ij}\hat{y}^\ell_{ij}.
\]
This concludes the proof for Case 2.

\vspace{0.5\baselineskip}
\noindent\textit{Case 3 ($ \beta' \le \beta $ and $ r^\ell_{ij_u} < (1+\epsilon(\beta'))\frac{\alpha r}{\beta'y} $):}
\vspace{0.2cm}

In this case, we set $H^\ell_{ij_k}=1$ for $k < u'$ and $H^\ell_{ij_k}=0$ for $k \ge u'$, zeroing out the demand from the lowest profit customers (types $j_1, \dots, j_{u'-1}$). The remaining profit is driven by customers of type $j_{u'}$ and higher. An important property in this case is that since the profits $r^\ell_{ij_k}$ are ordered, the construction of $h^\ell_{ij_k}$ implies a lower bound on $r^\ell_{ij_u}$:
\[ \alpha r = \sum_{k=1}^u r^\ell_{ij_k} h^\ell_{ij_k} \hat{y}^\ell_{ij_k} \le r^\ell_{ij_u} \sum_{k=1}^u h^\ell_{ij_k} \hat{y}^\ell_{ij_k} = r^\ell_{ij_u} \beta' y \implies r^\ell_{ij_u} \ge \frac{\alpha r}{\beta' y}. \]
This directly implies $u' \le u$. We first provide a claim about the amount of demand reduction from the intermediate $h$-parameters, proven in Appendix \ref{app:claimadv}.
\begin{claim}\label{claimadv}
$\sum_{s=u'}^u h^\ell_{ij_s}\hat{y}^\ell_{ij_s} \ge \frac{1}{2}\beta'y$.
\end{claim}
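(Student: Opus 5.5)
The plan is a two-sided averaging argument. The suppressed demand mass $\beta' y$ carries suppressed profit exactly $\alpha r$. Types below $u'$ have per-unit profit strictly below the lowered threshold, and types between $u'$ and $u$ have per-unit profit strictly below the raised threshold by the Case~3 condition. So the average $\frac{\alpha r}{\beta' y}$ sits exactly between the two thresholds, and this forces at least half of the suppressed mass into the upper band.

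Notation: write $\theta = \frac{\alpha r}{\beta' y}$, and $y_k = \hat y^\ell_{ij_k}$, $r_k = r^\ell_{ij_k}$, $h_k = h^\ell_{ij_k}$. We may assume $\beta'>0$, since otherwise the claim is trivial. Then $\theta>0$, because $r,y>0$.

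First I record the preliminary facts.
\begin{itemize}
\item We showed $u'\le u$. Since $h_k=1$ for $k<u$, the suppressed mass splits as
\[
A := \sum_{k<u'} h_k y_k, \qquad B := \sum_{s=u'}^{u} h_s y_s, \qquad A+B=\beta' y .
\]
\item Every type $k<u'$ satisfies $r_k<(1-\epsilon(\beta'))\theta$, by minimality of $u'$.
\item Every type $u'\le k\le u$ satisfies $r_k\le r_u<(1+\epsilon(\beta'))\theta$, by the ordering of profits and the Case~3 hypothesis.
\item Only types with $y_k>0$ matter, and for those $r_k\ge 0$ by the standing convention $r^\ell_{ij}\hat y^\ell_{ij}\ge 0$. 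So multiplying these profit bounds by $h_k y_k\ge 0$ preserves the inequalities.
\end{itemize}

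Next, by the defining property of the $h$-parameters, the suppressed profit is
\[
\alpha r=\sum_{k\le u} r_k h_k y_k \le (1-\epsilon(\beta'))\theta A + (1+\epsilon(\beta'))\theta B .
\]
Dividing by $\theta$ and using $\alpha r/\theta=\beta' y$ gives $\beta' y\le (1-\epsilon)A+(1+\epsilon)B$. Substituting $A=\beta' y-B$ yields $\beta' y \le (1-\epsilon)\beta' y + 2\epsilon B$, that is, $\epsilon(\beta')\,\beta' y\le 2\epsilon(\beta')\,B$.

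To divide by $\epsilon(\beta')$ I need $\epsilon(\beta')>0$, equivalently $\psi(\beta')<1$ with $\psi(\beta')>0$. For $x\in(0,1)$ we have $e^{x-1}>e^{x/2-1}$, so $0<1-e^{x-1}<1-e^{x/2-1}$, which gives $0<\psi(x)<1$ and hence $\epsilon(x)\in(0,1)$. This applies since $0<\beta'\le\beta<1$. Dividing then gives $B\ge\frac12\beta' y$, which is the claim.

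The only delicate points are bookkeeping: that $u'\le u$ makes $h_k=1$ on all of $k<u'$, so the split $A+B=\beta'y$ is exact; and the sign of $\epsilon$. I expect the sign of $\epsilon$ to be the main thing to check, and it is handled by the monotonicity of the exponential as above.
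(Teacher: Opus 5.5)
Your proof is correct and is essentially the paper's argument. Both bound the suppressed profit $\alpha r=\theta\beta' y$ using $r_k<(1-\epsilon(\beta'))\theta$ for $k<u'$ and $r_k<(1+\epsilon(\beta'))\theta$ for $u'\le k\le u$. Your forward rearrangement to $\epsilon(\beta')\beta' y\le 2\epsilon(\beta')B$ is the paper's proof by contradiction run directly, and your explicit check that $\epsilon(\beta')\in(0,1)$ makes precise a fact the paper uses tacitly when it writes $1-\epsilon(\beta')<1+\epsilon(\beta')$.
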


The total expected demand under the new parameters $H$ is $\mathbb{E}[Z_i^\ell] = \sum_{k=u'}^m \hat{y}^\ell_{ij_k}$. Let this sum be equal to $(1-\beta'')y$. To bound this term, we use the result from the claim. By definition, $\sum_{k=1}^m h^\ell_{ij_k} \hat{y}^\ell_{ij_k} = \beta'y$. Since $h^\ell_{ij_k}=1$ for $k < u$ (and $u' \le u$), this can be decomposed as:
\[
\beta'y = \sum_{k=1}^{u'-1} \hat{y}^\ell_{ij_k} + \sum_{k=u'}^u h^\ell_{ij_k} \hat{y}^\ell_{ij_k}.
\]
The claim states that $\sum_{k=u'}^u h^\ell_{ij_k} \hat{y}^\ell_{ij_k} \ge \beta'y/2$. This implies that the expected demand reduction from the lowest profit customer types is bounded: $\sum_{k=1}^{u'-1} \hat{y}^\ell_{ij_k} \le \beta'y/2$. The remaining expected demand is therefore:
\[
(1-\beta'')y = y - \sum_{k=1}^{u'-1} \hat{y}^\ell_{ij_k} \ge y - \beta'y/2 = y(1-\beta'/2).
\]
In particular, this gives $\beta''\le\beta'/2$. Because the customer types are ordered by increasing per-sale profit and Case 3 retains exactly the types $j_{u'},\dots,j_m$, every remaining sale earns at least $r^\ell_{ij_{u'}}$, the per-sale profit of the least profitable retained type $j_{u'}$. Following the logic of the Case 2 analysis, we have:
\[
\sum_{j \in \mathcal{M}} r^\ell_{ij} \mathbb{E}[\bar{Z}^\ell_{ij}] \ge r^\ell_{ij_{u'}} \mathbb{E}[\bar{Z}_i^\ell] \ge r^\ell_{ij_{u'}} y(1-e^{\beta''-1}) \ge r^\ell_{ij_{u'}} y(1-e^{\beta'/2-1}).
\]
The last inequality holds because $\beta''\le\beta'/2$, as established above, and $x \mapsto 1-e^{x-1}$ is decreasing, so the smaller argument $\beta''$ yields the larger value. Using the condition on $r^\ell_{ij_{u'}}$ from this case's definition, we obtain the final bound:
\begin{align*}
\sum_{j \in \mathcal{M}} r^\ell_{ij} \mathbb{E}[\bar{Z}^\ell_{ij}] &\ge (1-\epsilon(\beta'))\frac{\alpha r}{\beta'y} \cdot y(1-e^{\beta'/2-1}) 
= g(\beta')\alpha r  \ge g(\beta)\alpha r \ge 0.322r,
\end{align*}
where $g$ is a decreasing function for $x \in (0,1]$, as stated in Case~2. This concludes the proof for Case 3.

\vspace{0.3cm}

When Policy 2 is applied to an optimal solution of \eqref{eq:fluid}, summing the pairwise guarantees gives a total expected profit of at least $0.322 \cdot \text{LP}(\boldsymbol{c}) \ge 0.322 J^*_1(\boldsymbol{c})$.

\vspace{0.1cm}

{\bf Remark.} Parameters $\alpha$ and $\beta$ were chosen by a fine grid search to approximately maximize $\min(\beta(1-\alpha), g(\beta)\alpha)$, with the goal of obtaining the best possible uniform performance guarantee across all three cases.

\subsubsection{Part B: The Asymptotic Guarantee}\label{subsec:partB}\label{sec:proofadv-partB}

We now construct a $\left(1-\frac{m+1}{\sqrt{c_{\min}}}\right)$-approximate policy. 

\paragraph{Policy with Customer-Specific Capacities.}
The core idea for this case is  to create virtual, customer-specific inventory capacities, $c^\ell_{ij}$, that are slightly larger than the expected $(i,\ell)$ demands from that customer type, $\hat{y}^\ell_{ij}$. This allows us to use a powerful concentration inequality (Lemma~\ref{main ineq}) to show that stockouts for any specific customer type are rare.

The asymptotic guarantee is nontrivial only if $\sqrt{c_{\min}}>m+1$, so we assume this condition below; otherwise, Policy 2 applies.

First, we scale down the optimal fluid solution $(\boldsymbol{w}^*, \boldsymbol{y}^*)$ by a factor that depends on $c_{\min}$:
\[
(\hat{\boldsymbol{w}}, \hat{\boldsymbol{y}}) = \left(1 - \frac{m}{\sqrt{c_{\min}}}\right) (\boldsymbol{w}^*, \boldsymbol{y}^*).
\]
Here, as before, with scaling we also make a slight modification of increasing $\hat w_j(\varnothing)$ as needed to preserve feasibility. Next, for each triple $(i,j,\ell)$ with $i \in \mathcal{N}, j \in \mathcal{M}, \ell \in \mathcal{L}$, we define a virtual capacity $c^\ell_{ij} = \lfloor \hat{y}^\ell_{ij} + \sqrt{c_{\min}} \rfloor$ if $c_i^\ell>0$, and set $c^\ell_{ij}=0$ otherwise. Next, we show that these capacities satisfy the original physical constraints. This is immediate when $c_i^\ell=0$. Otherwise, $c_i^\ell\ge c_{\min}$, and since $\lfloor x \rfloor \le x$, we have:
\begin{align*}
\sum_{j \in \mathcal{M}} c^\ell_{ij} &\le \sum_{j \in \mathcal{M}} (\hat{y}^\ell_{ij} + \sqrt{c_{\min}}) = \left(\sum_{j \in \mathcal{M}} \hat{y}^\ell_{ij}\right) + m\sqrt{c_{\min}} \\
&= \left(1 - \frac{m}{\sqrt{c_{\min}}}\right)\left(\sum_{j \in \mathcal{M}} y^{*\ell}_{ij}\right) + m\sqrt{c_{\min}} \\
&\le \left(1 - \frac{m}{\sqrt{c_{\min}}}\right)c^\ell_i + m\sqrt{c_{\min}} \\
&= c^\ell_i - \frac{m c^\ell_i}{\sqrt{c_{\min}}} + m\sqrt{c_{\min}} \le c^\ell_i.
\end{align*}
The policy we introduce is a modification of Policy 1, using these new virtual capacities.

\vspace{0.5\baselineskip}
\noindent\rule{\textwidth}{0.4pt}
\textbf{Policy 3: Modified Simple Inventory-Agnostic Assortment and Fulfillment Policy}
\vspace{0.25\baselineskip}
\begin{itemize}[leftmargin=1.5em]
    \item \textbf{Assumption:} For each product $i$, physical warehouse $\ell$, and customer type $j$, there is a virtual stock level of $c^\ell_{ij}$.
    \item \textbf{Execution:} In each time period $t$, if a type-$j$ customer arrives, the policy follows the same steps as Policy 1 with $(\hat{\boldsymbol{w}}, \hat{\boldsymbol{y}})$, but a (non-virtual) sale is made only if the chosen warehouse $\ell$ has a positive stock of product $i$ for customer type $j$. This stock is depleted by one in the event of sale.
\end{itemize}
\noindent\rule{\textwidth}{0.4pt}

\paragraph{Performance Analysis.} The analysis of this policy relies on the following concentration inequality, proven in Appendix \ref{app:conc}.

\begin{lemma}\label{main ineq}
Let $Z$ be a non-zero sum of independent $[0,1]$-valued random variables. If $c >\mathbb{E}[Z]$, then $\frac{\mathbb{E}[(Z-c)^+]}{\mathbb{E}[Z]} \le \frac{1}{4(c -\mathbb{E}[Z])}$.
\end{lemma}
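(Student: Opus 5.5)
The plan is to bound the numerator by a variance term and then bound the variance by the mean, using that each summand takes values in $[0,1]$. Write $\mu=\mathbb{E}[Z]>0$, $\sigma^2=\mathrm{Var}(Z)$ and $d=c-\mu>0$. The claim then reduces to showing $\mathbb{E}[(Z-c)^+]\le \sigma^2/(4d)$ together with $\sigma^2\le\mu$, since dividing by $\mu$ gives exactly $\frac{1}{4(c-\mu)}$.

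For the first inequality I would use the classical Scarf-type identity $x^+ = \tfrac12(|x|+x)$. It gives
\begin{equation*}
\mathbb{E}[(Z-c)^+] = \tfrac12\bigl(\mathbb{E}|Z-c| + \mathbb{E}[Z-c]\bigr) = \tfrac12\bigl(\mathbb{E}|Z-c| - d\bigr).
\end{equation*}
By Cauchy--Schwarz (or Jensen), $\mathbb{E}|Z-c|\le \sqrt{\mathbb{E}[(Z-c)^2]}=\sqrt{\sigma^2+d^2}$. Hence
\begin{equation*}
\mathbb{E}[(Z-c)^+]\le \tfrac12\bigl(\sqrt{\sigma^2+d^2}-d\bigr)=\frac{\sigma^2}{2\bigl(\sqrt{\sigma^2+d^2}+d\bigr)}\le \frac{\sigma^2}{4d},
\end{equation*}
where the last step uses $\sqrt{\sigma^2+d^2}\ge d$. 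This step holds for any random variable with finite variance and does not use the structure of $Z$.

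For the second inequality, write $Z=\sum_k X_k$ with the $X_k$ independent and $[0,1]$-valued. Independence gives $\sigma^2=\sum_k \mathrm{Var}(X_k)$. Since $X_k\in[0,1]$ we have $X_k^2\le X_k$, so $\mathrm{Var}(X_k)\le \mathbb{E}[X_k^2]\le \mathbb{E}[X_k]$. Summing over $k$ yields $\sigma^2\le\mu$. Combining the two inequalities gives $\mathbb{E}[(Z-c)^+]\le \mu/(4d)$, which is the statement of Lemma~\ref{main ineq}.

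I do not expect a real obstacle. The only point needing care is choosing a bound that scales linearly in $\sigma^2/d$. A naive quadratic majorant of $(z-c)^+$ leaves an additive $d^2$ term and only gives order $(\sigma^2+d^2)/d$. The absolute-value identity followed by rationalizing $\sqrt{\sigma^2+d^2}-d$ avoids this loss.
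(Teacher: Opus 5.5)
Your proof is correct. Your second step is exactly the paper's: independence together with $X_k^2\le X_k$ gives $\mathrm{Var}(Z)\le\mathbb{E}[Z]$.

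The first step is where you differ. In your notation $\mu=\mathbb{E}[Z]$, $d=c-\mu$, the paper uses the pointwise inequality $(Z-c)^+\le (Z-\mu)^2/(4d)$ and then simply takes expectations. This inequality comes from the identity $(x-\mu)^2-4d(x-c)=(x-2c+\mu)^2\ge 0$, i.e. a parabola centered at the mean that touches the line $x-c$ at $x=2c-\mu$.

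You instead write $x^+=\tfrac12(|x|+x)$ and apply Cauchy--Schwarz. This gives the classical Scarf bound $\mathbb{E}[(Z-c)^+]\le\tfrac12\bigl(\sqrt{\sigma^2+d^2}-d\bigr)$, which you relax to $\sigma^2/(4d)$ only at the end.

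The paper's route is a one-line pointwise argument. Yours gives a sharper intermediate estimate; it is the worst case over all distributions with the given mean and variance. Since your bound is increasing in $\sigma^2$, combining it with $\sigma^2\le\mu$ yields $\mathbb{E}[(Z-c)^+]/\mathbb{E}[Z]\le 1/\bigl(2(\sqrt{\mu+d^2}+d)\bigr)$. That quantity stays bounded by $1/(2\sqrt{\mu})$ as $d\to 0$, whereas $1/(4d)$ blows up.

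One correction to your closing remark: a quadratic majorant loses an additive $d^2$ term only if it is centered at $c$. The paper's parabola is centered at $\mu$ and delivers $\sigma^2/(4d)$ with no such loss.
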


Under the modified policy, sales are fulfilled from the customer-specific capacities. The expected profit is:
\[
\mathbb{E}[\text{Total Profit}] = \sum_{i \in \mathcal{N}}\sum_{j\in \mathcal{M}}\sum_{\ell \in \mathcal{L}} r^\ell_{ij} \mathbb{E}[\bar{Z}^\ell_{ij}] = \sum_{i \in \mathcal{N}}\sum_{j\in \mathcal{M}}\sum_{\ell \in \mathcal{L}} r^\ell_{ij} \left(\mathbb{E}[Z^\ell_{ij}] - \mathbb{E}[(Z^\ell_{ij}-c^\ell_{ij})^+]\right).
\]
Similar to Lemma \ref{lem:expected_demand_match}, we have $\mathbb{E}[Z^\ell_{ij}]=\hat{y}^\ell_{ij}$. We can rewrite the expected total profit as:
\[
\mathbb{E}[\text{Total Profit}] = \sum_{i \in \mathcal{N}}\sum_{j\in \mathcal{M}}\sum_{\ell \in \mathcal{L}} r^\ell_{ij}\hat{y}^\ell_{ij} \left(1 - \frac{\mathbb{E}[(Z^\ell_{ij}-c^\ell_{ij})^+]}{\hat{y}^\ell_{ij}}\right),
\]
where terms with $\hat{y}^\ell_{ij}=0$ contribute zero and are omitted.
We use Lemma~\ref{main ineq} to bound the multipliers. Note that $c^\ell_{ij} - \mathbb{E}[Z^\ell_{ij}] = \lfloor \hat{y}^\ell_{ij} + \sqrt{c_{\min}} \rfloor - \hat{y}^\ell_{ij} \ge \hat{y}^\ell_{ij} + \sqrt{c_{\min}} - 1 - \hat{y}^\ell_{ij} = \sqrt{c_{\min}}-1$. For $c_{\min} \ge 4$ (and otherwise the bound is trivial), this is at least $\sqrt{c_{\min}}/2$. Applying the lemma:
\[
\frac{\mathbb{E}[(Z^\ell_{ij}-c^\ell_{ij})^+]}{\mathbb{E}[Z^\ell_{ij}]} \le \frac{1}{4(c^\ell_{ij}-\mathbb{E}[Z^\ell_{ij}])} \le \frac{1}{4(\sqrt{c_{\min}}-1)} \le \frac{1}{2\sqrt{c_{\min}}}.
\]
Substituting this bound back into the expected profit expression and using the definition of $\hat{\boldsymbol{y}}$ gives:
\begin{align*}
\mathbb{E}[\text{Total Profit}] &\ge \sum_{i \in \mathcal{N}}\sum_{j\in \mathcal{M}}\sum_{\ell \in \mathcal{L}} r^\ell_{ij}\hat{y}^\ell_{ij} \left(1 - \frac{1}{2\sqrt{c_{\min}}}\right) \\
&= \left(1 - \frac{1}{2\sqrt{c_{\min}}}\right)  \sum_{i \in \mathcal{N}}\sum_{j\in \mathcal{M}}\sum_{\ell \in \mathcal{L}}  r^\ell_{ij}\hat{y}^\ell_{ij} \\
&= \left(1 - \frac{1}{2\sqrt{c_{\min}}}\right)\left(1 - \frac{m}{\sqrt{c_{\min}}}\right)  \sum_{i \in \mathcal{N}}\sum_{j\in \mathcal{M}}\sum_{\ell \in \mathcal{L}}  r^\ell_{ij}y^{*\ell}_{ij}.
\end{align*}
Since $\sum_{i \in \mathcal{N},j \in \mathcal{M},\ell \in \mathcal{L}} r^\ell_{ij}y^{*\ell}_{ij} = \text{LP}(\boldsymbol{c})$, and for any $a,b\ge0$ we have $(1-a)(1-b) \ge 1-a-b$, we deduce
\[
\left(1 - \frac{1}{2\sqrt{c_{\min}}}\right)\left(1 - \frac{m}{\sqrt{c_{\min}}}\right) \ge 1 - \frac{m}{\sqrt{c_{\min}}} - \frac{1}{2\sqrt{c_{\min}}} > 1 - \frac{m+1}{\sqrt{c_{\min}}}.
\]
This gives $\mathbb{E}[\text{Total Profit}] \ge (1-\frac{m+1}{\sqrt{c_{\min}}}) \text{LP}(\boldsymbol{c}) \ge (1-\frac{m+1}{\sqrt{c_{\min}}}) J^*_1(\boldsymbol{c})$, which yields the desired performance guarantee.

\section{Approximation Algorithms for \texorpdfstring{\ref{ppf}}{PPF} Under a General Choice Model}\label{sec:joint}

We now turn to the main joint optimization problem: \ref{ppf}. In this section, we develop approximation algorithms for the \ref{ppf} problem under a general customer choice model. The performance guarantees depend on whether the customer arrival process is stationary. We use the notation $K^{\min} = \min_{\ell \in \mathcal{L}} K^{\ell}$ and $C_{\min} =\min_{i \in \mathcal{N}}C_i$. The approximation algorithms we develop are asymptotically optimal as $K^{\min}$ and $C_{\min}$ scale.

\begin{theorem} \label{joint stationarity}
For the \ref{ppf} problem under a general choice model and the assumption of stationary arrivals ($\lambda_{jt} = \lambda_{j}$), we can compute a 
$1-2\max\left(\sqrt{\frac{n}{K^{\min}}}, \sqrt{\frac{L}{C_{\min}}}\right)$-approximation in polynomial time.
\end{theorem}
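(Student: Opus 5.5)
We will work from a joint fluid relaxation in which the placement $\boldsymbol{c}$ becomes a continuous decision variable. Concretely, we add nonnegative variables $c_i^\ell$ to \eqref{eq:fluid} subject to $\sum_i c_i^\ell\le K^\ell$ and $\sum_\ell c_i^\ell\le C_i$. We call the resulting optimum $\text{JLP}$. By Lemma~\ref{lem:lpbound}, applied to every integer placement, $\text{OPT}\le\text{OPT}^*\le\text{JLP}$. The JLP can be solved by column generation using the assortment oracle. Take an optimal solution $(\boldsymbol{w}^*,\boldsymbol{y}^*,\boldsymbol{c}^*)$; without loss of generality $c_i^{*\ell}=\sum_j y_{ij}^{*\ell}$.

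Set $\delta=\max(\sqrt{n/K^{\min}},\sqrt{L/C_{\min}})$ and assume $\delta<1/2$, since otherwise the claim is trivial. We scale the fluid solution to $(\hat{\boldsymbol w},\hat{\boldsymbol y})=(1-\delta)(\boldsymbol w^*,\boldsymbol y^*)$, padding $\hat w_j(\varnothing)$ to keep feasibility. The integer placement is chosen to give each pair a buffer above its scaled fluid demand:
\[
c_i^\ell=\lfloor \hat y_i^\ell + s_i^\ell\rfloor,\qquad \hat y_i^\ell=\sum_j \hat y^\ell_{ij},
\]
where the buffers $s_i^\ell$ are chosen so that both constraints are respected. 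We need $\sum_i s_i^\ell\le\delta K^\ell$ and $\sum_\ell s_i^\ell\le \delta C_i$, because then
\[
\sum_i c_i^\ell\le(1-\delta)K^\ell+\delta K^\ell,
\]
and similarly for the supply constraints. A natural choice is $s_i^\ell=\delta\min(K^\ell/n,\,C_i/L)$, which satisfies both sums. Its size is $s_i^\ell\ge \min(\sqrt{K^{\min}/n}\cdot\sqrt{n K^{\min}}/n,\ \dots)$, giving $s_i^\ell\ge \min(\sqrt{K^{\min}/n},\sqrt{C_{\min}/L})=1/\delta$ by the choice of $\delta$. So every pair has additive slack of at least about $1/\delta$ over its expected demand.

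Next, we run Policy 1 with $(\hat{\boldsymbol w},\hat{\boldsymbol y})$ on this placement. Pairs with $\hat y_i^\ell=0$ but positive stock are harmless. The key difference from Theorem~\ref{thm:fixed stock stationary} is that we do not use Lemma~\ref{1e_sqrt}, which would tie the loss to $c_{\min}$ (possibly small). Instead, we use Lemma~\ref{lem:expected_profit_formula}, which relies on stationarity, together with the absolute-deviation bound of Lemma~\ref{main ineq}. Since $Z_i^\ell$ is a sum of independent Bernoulli trials with mean $\hat y_i^\ell$, and $c_i^\ell-\hat y_i^\ell\ge s_i^\ell-1$, we get
\[
\frac{\mathbb{E}[(Z_i^\ell-c_i^\ell)^+]}{\mathbb{E}[Z_i^\ell]}\le\frac{1}{4(s_i^\ell-1)}\le \frac{1}{4(1/\delta-1)}\le\frac{\delta}{2}
\]
for $\delta\le 1/2$. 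Summing over pairs gives expected profit at least
\[
(1-\delta/2)(1-\delta)\,\text{JLP}\ge(1-2\delta)\,\text{JLP}.
\]
Finally, the inventory-aware conversion of Section~\ref{sec:inventorybased} preserves this bound, which gives the claimed ratio.

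The main obstacle is the integer rounding of the placement. Flooring can only lose slack, and slack of order one is absorbed by the $-1$ above. We must still make sure that flooring never pushes a stock below the demand plus buffer by more than one unit, and that the capacity bound holds even when $\hat y_i^\ell+s_i^\ell$ is fractional; flooring handles this because it only decreases stocks. A second concern is that Lemma~\ref{lem:expected_profit_formula} is applied with a scaled, non-optimal solution and with stocks not equal to the fluid values. Its proof used only stationarity and feasibility, so this should go through. If the constant bookkeeping comes out with $1/\delta$ replaced by $1/(2\delta)$, we would instead use the slack $1/\delta - 1\ge 1/(2\delta)$ in Lemma~\ref{main ineq}, which still yields the factor $2\delta$ after combining the scaling loss $\delta$ with the overflow loss of at most $\delta/2$.
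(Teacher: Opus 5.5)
Your proposal is correct and follows essentially the paper's route. You solve the joint fluid, scale it by $1-\delta$, add an additive buffer of at least $1/\delta$ to each product--warehouse pair and round down, run Policy~1, and combine Lemma~\ref{lem:expected_profit_formula} with Lemma~\ref{main ineq} to obtain $(1-\delta/2)(1-\delta)\ge 1-2\delta$. The only difference is cosmetic: you use the per-pair buffer $s_i^\ell=\delta\min(K^\ell/n,C_i/L)$ on top of $\hat y_i^\ell$, whereas the paper uses the uniform buffer $\gamma=1/\delta$ on top of $(1-\delta)c_i^{*\ell}$. Your intermediate expression bounding $s_i^\ell$ contains a typo, but the conclusion $s_i^\ell\ge 1/\delta$ is correct.
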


\begin{theorem} \label{joint general}
For the \ref{ppf} problem under a general choice model with non-stationary arrivals, we can compute a $1-2\max\left(\sqrt{\frac{nm}{K^{\min}}}, \sqrt{\frac{Lm}{C_{\min}}}\right)$-approximation in polynomial time.
\end{theorem}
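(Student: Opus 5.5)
The plan is to prove Theorem~\ref{joint general} with a joint fluid relaxation: one optimal solution yields both the placement and a Policy~3-type dynamic policy with per-customer-type virtual capacities. The joint relaxation is \eqref{eq:fluid} with $\boldsymbol{c}$ also a variable, subject to $\sum_i c_i^\ell\le K^\ell$ and $\sum_\ell c_i^\ell\le C_i$. Call its value $\text{LP}^{\rm J}$. Maximizing Lemma~\ref{lem:lpbound} over feasible integer placements gives $\text{OPT}\le\text{OPT}^*\le \text{LP}^{\rm J}$. The relaxation is solvable in polynomial time by column generation with the assortment oracle. Let $(\boldsymbol{w}^*,\boldsymbol{y}^*,\boldsymbol{c}^*)$ be optimal; we may take $c_i^{*\ell}=\sum_j y^{*\ell}_{ij}$. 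Set $\delta=\max\bigl(\sqrt{nm/K^{\min}},\sqrt{Lm/C_{\min}}\bigr)$ and assume $\delta<1/2$, since otherwise the bound is trivial. Scale to $(\hat{\boldsymbol{w}},\hat{\boldsymbol{y}})=(1-\delta)(\boldsymbol{w}^*,\boldsymbol{y}^*)$, padding $\hat w_j(\varnothing)$ to keep the third constraint.

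\textbf{Step 1 (placement).} For each triple $(i,j,\ell)$ with $\hat y^\ell_{ij}>0$, define a virtual capacity $c^\ell_{ij}=\lfloor \hat y^\ell_{ij}+s_{ij}^\ell\rfloor$ with a safety buffer $s^\ell_{ij}$. Then place $c_i^\ell=\sum_j c^\ell_{ij}$ units. The buffer must satisfy two budget inequalities:
\[
\sum_{i,j} s^\ell_{ij}\le \delta K^\ell \quad\text{for every } \ell, \qquad \sum_{j,\ell} s^\ell_{ij}\le \delta C_i \quad\text{for every } i.
\]
If both hold, then $\sum_i c_i^\ell\le (1-\delta)K^\ell+\delta K^\ell$, and likewise for supply, so the placement is feasible.

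\textbf{Choice of buffer (the main obstacle).} A uniform buffer $s$ would need $nms\le\delta K^{\min}$ and $Lms\le \delta C_{\min}$. That forces $s$ of order $\delta^2\cdot\min(K^{\min}/(nm),\,C_{\min}/(Lm))=\Theta(1)$, which is too small to give a loss of order $\delta$. The square-root rate therefore needs an adaptive buffer. The first choice to try is $s^\ell_{ij}=\sqrt{\hat y^\ell_{ij}\,\kappa}$ for a constant $\kappa$. Cauchy--Schwarz bounds the warehouse budget:
\[
\sum_{i,j}\sqrt{\hat y^\ell_{ij}\kappa}\le\sqrt{nm\,\kappa\,K^\ell}.
\]
For this to be at most $\delta K^\ell$, it suffices that $\kappa\le \delta^2K^{\min}/(nm)$. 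The supply budget works the same way and needs $\kappa\le\delta^2 C_{\min}/(Lm)$. The definition of $\delta$ then lets us take $\kappa=1$. Lemma~\ref{main ineq} bounds the relative loss per triple by $1/(4(s-1))$, which is weak when $\hat y$ is small. Tiny triples can instead be handled with Lemma~\ref{exp min bound}, or with a Chebyshev-type bound $\mathbb{E}[(Z-c)^+]\le \sqrt{\mathrm{Var}(Z)}/2\cdot(\cdot)$ that scales like $\sqrt{\hat y}$ relative to $\hat y$. I expect to need the sharper inequality $\mathbb{E}[(Z-c)^+]\le \tfrac12\sqrt{\mathrm{Var} Z}$ for $c\ge\mathbb{E}Z$, which gives relative loss $\le \tfrac12\sqrt{1/\hat y}$. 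The plan is to use it alongside an absolute-loss accounting, and to balance the scaling factor against the buffer so that the total profit loss is at most $2\delta\,\text{LP}^{\rm J}$.

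\textbf{Step 2 (policy).} Run Policy~3 with these per-type virtual capacities. Each $Z^\ell_{ij}$ is a sum of independent Bernoulli trials across periods, regardless of stationarity, and has mean $\hat y^\ell_{ij}$ by Lemma~\ref{lem:expected_demand_match}. So
\[
\mathbb{E}[\text{profit}]\ge\sum r^\ell_{ij}\bigl(\hat y^\ell_{ij}-\mathbb{E}[(Z^\ell_{ij}-c^\ell_{ij})^+]\bigr).
\]
The scaling costs a factor $(1-\delta)$, and the overflow costs at most another $\delta$ fraction, giving $(1-\delta)^2\ge1-2\delta$. Finally, Section~\ref{sec:inventorybased} converts the policy to an inventory-aware one at no loss.

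For Theorem~\ref{joint stationarity}, the factor $m$ disappears. Capacities are per pair $(i,\ell)$, with buffers shared across types, and the loss is bounded through Lemma~\ref{lem:expected_profit_formula}, since stationarity makes lost profit proportional to lost units. The same budget computation then runs over $n$ or $L$ terms instead of $nm$ or $Lm$.
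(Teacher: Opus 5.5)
There is a genuine gap, and it starts in the step you call the main obstacle. Your architecture is right: joint fluid, scaling by $1-\delta$, per-type virtual capacities $\lfloor\hat y^\ell_{ij}+s^\ell_{ij}\rfloor$, Policy~3, and conversion via Section~\ref{sec:inventorybased}. The problem is that the obstacle rests on an arithmetic slip, and the adaptive buffer you substitute breaks the proof.

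The uniform budget constraints $nms\le\delta K^{\min}$ and $Lms\le\delta C_{\min}$ allow $s=\delta\min\bigl(K^{\min}/(nm),\,C_{\min}/(Lm)\bigr)=\delta\cdot\delta^{-2}=1/\delta$, not $\Theta(1)$. The minimum equals $\delta^{-2}$ by definition of $\delta$; you multiplied it by $\delta^2$ instead of $\delta$. So take the uniform buffer $\gamma=1/\delta$:
\begin{itemize}
\item The budgets hold, since $nm\gamma\le\sqrt{nmK^{\min}}\le\delta K^\ell$ and $Lm\gamma\le\sqrt{LmC_{\min}}\le\delta C_i$.
\item When $\delta\le1/2$, every triple has gap $c^\ell_{ij}-\hat y^\ell_{ij}\ge\gamma-1\ge\gamma/2$.
\item Lemma~\ref{main ineq} then bounds each triple's relative overflow loss by $1/(4(\gamma-1))\le\delta/2$, whatever the size of $\hat y^\ell_{ij}$.
\item The total is at least $(1-\delta)(1-\delta/2)\,\text{LP}^{\rm J}\ge(1-2\delta)\,\text{LP}^{\rm J}$.
\end{itemize}
This is exactly the paper's proof. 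It uses $\hat c^\ell_{ij}=\lfloor\hat y^\ell_{ij}+\gamma\rfloor$ and stocks $\lfloor(1-\delta)c^{*\ell}_i+m\gamma\rfloor\ge\sum_j\hat c^\ell_{ij}$.

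The buffer $s^\ell_{ij}=\sqrt{\hat y^\ell_{ij}}$ cannot give the theorem, so the deferred ``balancing'' step is not just unfinished. Profits are arbitrary, so the LP value can sit almost entirely on a few triples with small $\hat y^\ell_{ij}$. You therefore need $\mathbb{E}[(Z^\ell_{ij}-c^\ell_{ij})^+]\le O(\delta)\,\hat y^\ell_{ij}$ on each profitable triple. With your buffer, none of your tools certifies this for triples with $\hat y^\ell_{ij}$ below order $\delta^{-2}$:
\begin{itemize}
\item Lemma~\ref{main ineq} needs a buffer of order $1/\delta$.
\item The $\tfrac12\sqrt{\mathrm{Var}\,Z}$ bound needs $\hat y^\ell_{ij}$ of order $\delta^{-2}$.
\item Lemma~\ref{exp min bound} needs $\hat y^\ell_{ij}\le O(\delta)\,c^\ell_{ij}$, but here $c^\ell_{ij}$ is either $0$ or within a constant factor of $\hat y^\ell_{ij}$.
\end{itemize}
Concretely, if $\hat y^\ell_{ij}<(3-\sqrt{5})/2\approx0.38$, then $c^\ell_{ij}=\lfloor\hat y^\ell_{ij}+\sqrt{\hat y^\ell_{ij}}\rfloor=0$. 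A customer type with $\tau_j<0.38$ and a huge margin is then never served, and the ratio tends to $0$. Forcing $c^\ell_{ij}\ge1$ does not help. A triple with $\hat y^\ell_{ij}=1$ gets $c^\ell_{ij}=2$, and if arrivals are spread over many periods its nearly Poisson$(1)$ demand loses about $3/e-1\approx0.10$ of its volume, a constant rather than $O(\delta)$. Reverting to the uniform buffer $1/\delta$ closes the gap.
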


Our approach is to first solve a fluid relaxation of the joint problem to find a near-optimal stocking plan, and then run one of the inventory-agnostic policies from Section~\ref{sec:FixedStock} using a scaled optimal solution to the (joint) fluid relaxation and chosen stocking quantities.

\subsection{A Fluid Relaxation for \texorpdfstring{\ref{ppf}}{PPF}}
We relax the integer constraints on the stocking $\boldsymbol{c}$ and combine that with the fluid approximation $\text{LP}(\boldsymbol{c})$ from Lemma~\ref{lem:lpbound}. This yields the following linear program:
\begin{equation}\label{eq:relaxed_joint}
\text{LP}^* = \max_{\boldsymbol{c}, \boldsymbol{w}, \boldsymbol{y} \ge \bf{0}} \quad \left\{  \sum_{i \in \mathcal{N}}\sum_{j\in \mathcal{M}}\sum_{\ell \in \mathcal{L}_+}  r^\ell_{ij}y^\ell_{ij} \;\; \middle| \;\; \text{s.t. constraints in \eqref{eq:fluid} and constraints on } \boldsymbol{c} \right\}.
\end{equation}

Here, the constraints on $\boldsymbol{c}$ are those of the \ref{ppf} problem with integrality relaxed: $\sum_{i \in \mathcal{N}} c^\ell_i \le K^\ell$ for all $\ell \in \mathcal{L}$ and $\sum_{\ell \in \mathcal{L}} c^\ell_i \le C_i$ for all $i \in \mathcal{N}$. We sometimes refer to this linear program as joint fluid approximation (relaxation) or, more concisely, joint fluid. It can be solved in polynomial time under the assumption of an assortment optimization oracle. 

Let $(\boldsymbol{c}^*, \boldsymbol{w}^*, \boldsymbol{y}^*)$ be an optimal solution to the joint fluid \eqref{eq:relaxed_joint}. It follows from Lemma \ref{lem:lpbound} that this value serves as an upper bound on the optimal expected profit. Similarly to Section~\ref{sec:FixedStock}, without loss of generality, we assume $r^\ell_{ij} {y}^{*\ell}_{ij} \ge 0$ for all $i \in \mathcal{N},j \in \mathcal{M},\ell \in \mathcal{L}_+$; if $r^{\ell}_{ij} < 0$, we can set ${y}^{*\ell}_{ij} = 0$ and transfer its value to $y^{*L+1}_{ij}$, without decreasing the objective value, while maintaining feasibility.

\subsection{Proof of Theorem \ref{joint stationarity}}

To build a policy with desired performance, we follow the plan outlined in the beginning of the section.
\vspace{0.5\baselineskip}

\noindent\textbf{Choosing the Stocking and Dynamic Policy.}
For the approximation guarantee to be non-trivial, we assume $K^{\min} \ge 4n$ and $C_{\min} \ge 4L$. We define a scaling factor $\alpha$ and a rounding term $\gamma$ as follows:
\[
\alpha = 1-\max\left(\sqrt{\frac{n}{K^{\min}}},\sqrt{\frac{L}{C_{\min}}}\right), \quad \gamma = \min\left(\sqrt{\frac{K^{\min}}{n}},\sqrt{\frac{C_{\min}}{L}}\right).
\]
We construct our feasible stocking plan $\hat{\boldsymbol{c}}$ by scaling and rounding up the fluid solution:
\[
\hat{c}^\ell_i = \lfloor \alpha c^{*\ell}_i + \gamma \rfloor \quad \text{for all } i \in \mathcal{N}, \ell \in \mathcal{L}.
\]
We verify feasibility of our stocking plan in Appendix \ref{app:jfs}. The assortment and fulfillment policy we use is Policy 1 with $(\hat{\boldsymbol{w}}, \hat{\boldsymbol{y}}) = \alpha(\boldsymbol{w}^*, \boldsymbol{y}^*)$ (as before, with a slight modification of increasing $\hat w_j(\varnothing)$ as needed to maintain feasibility).

\paragraph{Performance Analysis.}
We now analyze the expected profit of running Policy 1    with the scaled optimal joint fluid solution $(\hat{\boldsymbol{w}}, \hat{\boldsymbol{y}}) = \alpha(\boldsymbol{w}^*, \boldsymbol{y}^*)$ and the feasible capacities $\hat{\boldsymbol{c}}$. Note that $\alpha(\boldsymbol{w}^*, \boldsymbol{y}^*)$ is always a feasible solution for \eqref{eq:fluid} with $\hat{\boldsymbol{c}}$. We prove the desired performance guarantee using Lemma~\ref{main ineq}.

\noindent From Lemma~\ref{lem:expected_profit_formula}, which holds under the assumption of stationary arrivals, it follows that the expected profit is:
\[
\mathbb{E}[\text{Total Profit}] = \sum_{i \in \mathcal{N}}\sum_{ \ell \in \mathcal{L}} \left(\sum_{j \in \mathcal{M}} r^\ell_{ij}\hat{y}^\ell_{ij}\right) \left( 1 - \frac{\mathbb{E}[(Z^\ell_i - \hat{c}^\ell_i)^+]}{\mathbb{E}[Z^\ell_i]}\right),
\]
where $Z^\ell_i$ is the random variable for total demand for product $i$ at warehouse $\ell$, as defined in Section~\ref{sec:FixedStock}. To apply Lemma~\ref{main ineq}, we need to lower-bound the gap $\hat{c}^\ell_i - \mathbb{E}[Z_i^\ell]$. Applying Lemma~\ref{lem:expected_demand_match}, we have $\mathbb{E}[Z_i^\ell] = \sum_{j} \hat{y}^\ell_{ij} = \alpha \sum_{j} y^{*\ell}_{ij} \le \alpha c^{*\ell}_i$. Therefore,
\[
\hat{c}^\ell_i - \mathbb{E}[Z_i^\ell] \ge \lfloor \alpha c^{*\ell}_i + \gamma \rfloor - \alpha c^{*\ell}_i \ge (\alpha c^{*\ell}_i + \gamma - 1) - \alpha c^{*\ell}_i = \gamma - 1.
\]
Since our non-triviality assumption implies $\gamma \ge 2$, we have $\gamma-1 \ge \gamma/2$. Let $\delta = \max\left(\sqrt{\frac{n}{K^{\min}}}, \sqrt{\frac{L}{C_{\min}}}\right)$ for notational simplicity. From Lemma~\ref{main ineq} we get
\[
\frac{\mathbb{E}[(Z^\ell_i - \hat{c}^\ell_i)^+]}{\mathbb{E}[Z_i^\ell]} \le \frac{1}{4(\hat{c}^\ell_i - \mathbb{E}[Z_i^\ell])} \le \frac{1}{4(\gamma-1)} \le \frac{1}{2\gamma} = \frac{1}{2}\delta.
\]
Substituting this bound into the profit formula and using the fact that $\sum_{i \in \mathcal{N}, j \in \mathcal{M}, \ell \in \mathcal{L}} r^\ell_{ij}\hat{y}^\ell_{ij} = \alpha \cdot \text{LP}^*$, we get
\begin{align*}
\mathbb{E}[\text{Total Profit}] &\ge \sum_{i \in \mathcal{N}}\sum_{ j \in \mathcal{M}}\sum_{ \ell \in \mathcal{L}} r^\ell_{ij}\hat{y}^\ell_{ij} \left(1 - \frac{\delta}{2}\right) \\
&= \left(1 - \frac{\delta}{2}\right) \alpha \cdot \text{LP}^* \\
&= \left(1 - \frac{\delta}{2}\right) \left(1 - \delta \right) \text{LP}^* \\
&\ge \left(1 - 2\delta\right) \text{LP}^*.
\end{align*}
Substituting the definition of $\delta$ back gives the final guarantee. Since $\text{LP}^*$ is an upper bound on the optimal expected profit, this completes the proof. 

\subsection{Proof of Theorem \ref{joint general}}

The proof for the non-stationary case follows a similar structure to the stationary case. However, we use Policy 3 and different scaling and rounding parameters, accounting for the greater uncertainty inherent in non-stationary arrivals.

\paragraph{Choosing the Stocking and Dynamic Policy.}
For the approximation to be non-trivial, we assume that $K^{\min} \ge 4mn$ and $C_{\min} \ge 4mL$. We define a new scaling factor $\alpha$ and rounding term $\gamma$:
\[
\alpha = 1-\max\left(\sqrt{\frac{mn}{K^{\min}}},\sqrt{\frac{mL}{C_{\min}}}\right), \quad \gamma = \min\left(\sqrt{\frac{K^{\min}}{mn}},\sqrt{\frac{C_{\min}}{mL}}\right).
\]
We construct an aggregate stocking plan $\hat{\boldsymbol{c}}$ and customer-specific virtual capacities $\hat{c}^\ell_{ij}$ as follows:
\[
\hat{c}^\ell_i = \lfloor \alpha c^{*\ell}_i + m\gamma \rfloor \quad \text{and} \quad \hat{c}^\ell_{ij} = \lfloor \hat{y}^\ell_{ij} + \gamma \rfloor.
\]
We verify feasibility of our stocking plan in Appendix \ref{app:jfg}. The assortment and fulfillment policy we use is Policy 3 with $(\hat{\boldsymbol{w}}, \hat{\boldsymbol{y}}) = \alpha(\boldsymbol{w}^*, \boldsymbol{y}^*)$ (as usual, with a slight modification of increasing $\hat w_j(\varnothing)$ as needed to maintain feasibility).  This policy operates using a feasible fluid solution for assortment and fulfillment probabilities, but makes each sale contingent on the corresponding customer-specific virtual capacity $\hat{c}^\ell_{ij}$ being positive. Note that $\alpha(\boldsymbol{w}^*, \boldsymbol{y}^*)$ is always a feasible solution for \eqref{eq:fluid} with $\hat{\boldsymbol{c}}$.

\paragraph{Performance Analysis.}
The expected profit of Policy 3 is the sum of expected profits across all combinations of product, customer type, and warehouse. For a given tuple $(i,j,\ell)$, the expected profit is $r^\ell_{ij} \mathbb{E}[\bar{Z}^\ell_{ij}]$, where $\bar{Z}^\ell_{ij}$ are corresponding sales, which are equal to demand capped by the customer-specific capacity $\hat{c}^\ell_{ij}$. We can write the total expected profit as:
\[
\mathbb{E}[\text{Total Profit}] = \sum_{i \in \mathcal{N}}\sum_{ j \in \mathcal{M}}\sum_{ \ell \in \mathcal{L}} r^\ell_{ij} \left(\mathbb{E}[Z^\ell_{ij}] - \mathbb{E}[(Z^\ell_{ij} - \hat{c}^\ell_{ij})^+]\right).
\]
Here, $Z^\ell_{ij}$ is the random variable for total demand from type-$j$ customers for product $i$ at warehouse $\ell$. Following the same logic as in the proof of Lemma~\ref{lem:expected_demand_match}, the expected demand is $\mathbb{E}[Z^\ell_{ij}] = \hat{y}^\ell_{ij}$. We can therefore rewrite the expected profit as:
\[
\mathbb{E}[\text{Total Profit}] = \sum_{i \in \mathcal{N}}\sum_{ j \in \mathcal{M}}\sum_{ \ell \in \mathcal{L}} r^\ell_{ij}\hat{y}^\ell_{ij} \left(1 - \frac{\mathbb{E}[(Z^\ell_{ij} - \hat{c}^\ell_{ij})^+]}{\hat{y}^\ell_{ij}}\right).
\]
We use Lemma~\ref{main ineq} again to bound the fractional loss for each $(i,j,\ell)$ tuple. The gap term is:
\[
\hat{c}^\ell_{ij} - \mathbb{E}[Z_{ij}^\ell] = \lfloor \hat{y}^\ell_{ij} + \gamma \rfloor - \hat{y}^\ell_{ij} \ge (\hat{y}^\ell_{ij} + \gamma - 1) - \hat{y}^\ell_{ij} = \gamma - 1.
\]
For the bound to be non-trivial, we must have $\gamma \ge 2$, so $\gamma-1 \ge \gamma/2$. Applying Lemma~\ref{main ineq}:
\[
\frac{\mathbb{E}[(Z^\ell_{ij} - \hat{c}^\ell_{ij})^+]}{\mathbb{E}[Z_{ij}^\ell]} \le \frac{1}{4(\hat{c}^\ell_{ij} - \mathbb{E}[Z_{ij}^\ell])} \le \frac{1}{4(\gamma-1)} \le \frac{1}{2\gamma}.
\]
Let $\delta = \max\left(\sqrt{\frac{nm}{K^{\min}}}, \sqrt{\frac{Lm}{C_{\min}}}\right)$. Then $\frac{1}{2\gamma} = \frac{1}{2}\delta$. Substituting this into the expected total profit formula:
\begin{align*}
\mathbb{E}[\text{Total Profit}] &\ge \sum_{i \in \mathcal{N}}\sum_{j \in \mathcal{M}}\sum_{\ell \in \mathcal{L}} r^\ell_{ij}\hat{y}^\ell_{ij} \left(1 - \frac{\delta}{2}\right) \\
&= \left(1 - \frac{\delta}{2}\right) \alpha \cdot \text{LP}^* \\
&= \left(1 - \frac{\delta}{2}\right) \left(1 - \delta \right) \text{LP}^* \\
&\ge \left(1 - 2\delta\right) \text{LP}^*.
\end{align*}
Substituting the definition of $\delta$ back gives the final guarantee. Since $\text{LP}^*$ is an upper bound on the optimal profit, this completes the proof. 
\section{Constant Factor Approximation for \texorpdfstring{\ref{ppf}}{PPF} Under the MNL Choice Model}\label{sec:mnljoint}

In this section, we show that in the case of the widely-used MNL choice model, we can develop a constant-factor approximation algorithm for the full joint optimization problem, \ref{ppf}. The structure inherent in the MNL model allows for a crucial simplification of the fluid relaxation. This, in turn, enables us to formulate an auxiliary optimization problem whose objective function can be shown to be DR-submodular. By leveraging known results for maximizing submodular functions under matroid constraints, we are able to establish a constant-factor performance guarantee.

We begin by formally defining the MNL model in our setting. For each customer type $j \in \mathcal{M}$, there is a set of non-negative preference weights $\{v_{ij} \mid i \in \mathcal{N}\}$. The probability that a type-$j$ customer chooses product $i$ from an offered assortment $S$ is given by: $\phi_{ij}(S) = \frac{v_{ij}}{1 + \sum_{i' \in S} v_{i'j}}.$
The probability of choosing the no-purchase option is $\frac{1}{1 + \sum_{i' \in S} v_{i'j}}$.

Let us briefly sketch our strategy in building the approximation. We approximate $\text{LP}(\boldsymbol{c})$ with a DR-submodular surrogate function and then apply well-known algorithms to optimize that function subject to capacity and supply constraints. This allows us to find approximately optimal stocking quantities for which we can use some of our policies from Section~\ref{sec:FixedStock}. Our focus in this section, therefore, will be on finding the surrogate function and proving its DR-submodularity. First, however, we formalize our key results in this section.

\begin{theorem}[Constant-Factor Approximations for MNL]\label{thm:jointconst}
For the \ref{ppf} problem under the MNL choice model, there exists a polynomial-time algorithm that computes a $\beta$-approximation, where the factor $\beta$ depends on the problem assumptions as follows:
\begin{itemize}[leftmargin=2em]
    \item In the general case: $\beta = 0.0805 - \epsilon$.
    \item If product supplies are unlimited ($C_{\min} = +\infty$): $\beta = 0.161(1 - 1/e)$.
    \item If customer arrivals are stationary: $\beta = \frac{1}{4}(1 - 1/e) - \epsilon$.
    \item If arrivals are stationary and product supplies are unlimited: $\beta = \frac{1}{2}(1 - 1/e)^2 $.
\end{itemize}
Here, $\epsilon > 0$ can be an arbitrarily small positive constant and its inverse, $1/\epsilon$, is treated as a constant in the complexity analysis.
\end{theorem}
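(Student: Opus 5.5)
The constants in the statement factor as products, and this suggests the plan. We have $0.0805 = 0.322\cdot\tfrac12\cdot\tfrac12$, $0.161(1-1/e) = 0.322\cdot\tfrac12\cdot(1-1/e)$, $\tfrac14(1-1/e) = (1-1/e)\cdot\tfrac12\cdot\tfrac12$, and $\tfrac12(1-1/e)^2 = (1-1/e)\cdot\tfrac12\cdot(1-1/e)$. I would therefore prove the theorem as the product of three losses.

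\begin{itemize}
\item[(a)] A surrogate $G(\boldsymbol c)$ sandwiches the fluid value, $G(\boldsymbol c)\le \mathrm{LP}(\boldsymbol c)$, and satisfies $\max_{\boldsymbol c}G(\boldsymbol c)\ge \tfrac12\,\mathrm{LP}^*$ over the feasible placements. This costs a factor $\tfrac12$.
\item[(b)] $G$ is monotone DR-submodular on the integer lattice, so it can be maximized up to a factor $(1-1/e)$ when only the warehouse (partition-type) capacity constraints are present. When the supply constraints are added, the feasible set is an intersection of two partition constraints, and a factor $\tfrac12-\epsilon$ is achievable, e.g.\ by local search or greedy over two matroids after expanding units into elements.
\item[(c)] For the resulting integral $\boldsymbol c$, we run Policy~1 (stationary, factor $1-1/e$ by Theorem~\ref{thm:fixed stock stationary}) or Policy~2 (general, factor $0.322$ by Theorem~\ref{thm:fixed stock general advanced}). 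Each is measured against $\mathrm{LP}(\boldsymbol c)\ge G(\boldsymbol c)$, and finally Section~\ref{sec:inventorybased} converts the policy to an inventory-aware one. Multiplying the three losses and using $\mathrm{OPT}\le\mathrm{OPT}^*\le\mathrm{LP}^*$ gives exactly the four stated constants.
\end{itemize}

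For the surrogate, I would use the MNL sales-based reformulation of \eqref{eq:fluid}. Under MNL, the flow-balance and assortment constraints are equivalent to the following: there exist no-purchase masses $x_{0j}$ with $\sum_{i}\sum_{\ell\in\mathcal L_+}y^\ell_{ij}+x_{0j}=\tau_j$ and $\sum_{\ell\in\mathcal L_+} y^\ell_{ij}\le v_{ij}x_{0j}$. I would define $G(\boldsymbol c)$ as a transportation/matching value that drops the coupling through $x_{0j}$. Concretely, $G$ maximizes $\sum r^\ell_{ij}y^\ell_{ij}$ subject to three constraints: $\sum_j y^\ell_{ij}\le c^\ell_i$; per-pair caps $\sum_\ell y^\ell_{ij}\le \tfrac12\tau_j v_{ij}/(1+v_{ij})$ (or a similar per-type, per-product cap obtained by offering singletons); and a per-type total $\sum_{i,\ell}y^\ell_{ij}\le\tfrac12\tau_j$.

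The lower bound $G\le\mathrm{LP}$ is shown by explicitly realizing any such flow with a mixture of MNL assortments, using the halving slack so that offering each product with the appropriate probability stays within the MNL feasibility region. The bound $\max G\ge\tfrac12\mathrm{LP}^*$ is shown by taking the optimal joint fluid solution, halving it, and checking that the halved flows satisfy the relaxed caps. In both directions the key facts are $y_{ij}\le v_{ij}x_{0j}$ and $x_{0j}\le\tau_j$, which give $\sum_\ell y^\ell_{ij}\le\tau_j v_{ij}/(1+v_{ij})$. I expect some tuning of which caps to keep so that both inequalities hold with the same factor $\tfrac12$. The $\epsilon$ in the supply-constrained cases comes from the approximate two-matroid step (and from discretizing the continuous relaxation in the stationary case); the count of $1/\epsilon$ swaps is a constant, which keeps the algorithm polynomial.

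I expect the main obstacle to be DR-submodularity of $G$ in $\boldsymbol c$ when profits $r^\ell_{ij}$ depend on the triple $(i,j,\ell)$. My first attempt is to observe that, once the choice coupling is removed, $G$ is the optimal value of a max-weight transportation (bipartite $b$-matching) problem. Its supply side is indexed by units $(i,\ell)$ with capacities $c^\ell_i$, and its demand side is indexed by $(i,j)$ together with per-type totals, which forms a laminar family on each side. Value functions of such network flow problems are gross-substitutes / M$^\natural$-concave in the supply vector, and M$^\natural$-concavity implies that marginal values of an extra unit are nonincreasing coordinatewise and across coordinates, i.e.\ DR-submodularity. I would verify this directly with an augmenting-path exchange argument: adding a unit of $(i,\ell)$ at a larger $\boldsymbol c$ yields an augmenting path whose gain is no more than the gain at a smaller $\boldsymbol c$. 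Monotonicity is immediate because $y\ge0$ can be routed to the virtual warehouse. If the laminar per-type constraint breaks the network structure, I would fall back to splitting each type-$j$ total into a separate sink node, which keeps the problem a single-commodity flow.
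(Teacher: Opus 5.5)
Your architecture matches the paper's. You use a $\tfrac12$-sandwiched MNL surrogate, prove DR-submodularity, maximize over unit copies under partition matroids, run Policy~1 or~2 on the resulting placement measured against $\mathrm{LP}(\boldsymbol c)$, and convert via Section~\ref{sec:inventorybased}.

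\textbf{The gap.} One link in your chain is false as stated. You prove $\max_{\boldsymbol c}G(\boldsymbol c)\ge\tfrac12\,\mathrm{LP}^*$ by halving the optimal \emph{joint} fluid solution, and then close with $\mathrm{OPT}\le\mathrm{OPT}^*\le\mathrm{LP}^*$. But the submodular step only approximates $\max G$ over \emph{integer} placements. Halving $(\boldsymbol c^*,\boldsymbol y^*)$ gives a fractional placement, and the integer maximum can be far smaller.

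For example, take one warehouse with $K=1$, two products, and two types with $\tau_1=\tau_2=\tfrac12$. Let type $j$ be attracted only to product $j$ with a large weight $V$, and set all profits to $1$. Then $\mathrm{LP}^*\approx1$ at $\boldsymbol c=(\tfrac12,\tfrac12)$. Any integer placement stocks only one product, so your per-type cap gives $G\le\tfrac14<\tfrac12\mathrm{LP}^*$.

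The fix is to make the comparison pointwise, as in Lemma~\ref{lem:fapp_approx}. For each integer $\boldsymbol c$, halve an optimal solution of the MNL reformulation of $\mathrm{LP}(\boldsymbol c)$ at that same $\boldsymbol c$ to get $G(\boldsymbol c)\ge\tfrac12\mathrm{LP}(\boldsymbol c)$. Then use $\max_{\boldsymbol c\in\mathbb Z}\mathrm{LP}(\boldsymbol c)\ge\max_{\boldsymbol c\in\mathbb Z}J_1^*(\boldsymbol c)=\mathrm{OPT}^*\ge\mathrm{OPT}$ by Lemma~\ref{lem:lpbound}. $\mathrm{LP}^*$ should not appear in the chain.

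Separately, expanding units into elements is polynomial only after capping each $c^\ell_i$ (e.g.\ at $T$). The paper does this and you omit it. The cap is harmless because the per-type caps already force $\sum_j y^\ell_{ij}\le T/2$.

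\textbf{Where your route differs.} With that repaired, your proof departs from the paper's in two acceptable ways.

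First, your pair caps $\tfrac12\tau_jv_{ij}/(1+v_{ij})$ are tighter than the paper's $\tfrac12v_{ij}\tau_j$, but both sandwich inequalities still hold. The halving direction needs $y_{ij}\le\tau_jv_{ij}/(1+v_{ij})$. This follows from $y_{ij}\le v_{ij}x_{0j}$ together with $y_{ij}+x_{0j}\le\tau_j$, not from $x_{0j}\le\tau_j$ alone. The direction $G\le\mathrm{LP}$ is simply $x_{0j}=\tau_j/2$ in Lemma~\ref{lem:mnl_lp_equiv}; no explicit assortment mixture is needed.

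Second, for DR-submodularity the paper dualizes. It writes $f_{app}(\boldsymbol c)=\min_{\boldsymbol\mu}F(\boldsymbol c,\boldsymbol\mu)$ and derives a lattice inequality for $F$ from a fractional-knapsack monotonicity claim for its per-type functions $G_j$. You instead view $G$ as a max-weight flow on source $\to(i,\ell)\to(i,j)\to j\to$ sink. This is already a single-commodity network with laminar caps on the demand side, so no fallback is needed. You then invoke M$^\natural$-concavity of the value in the source-arc capacities. That is valid and more conceptual.

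However, $\tau_j/2$ and the pair caps are fractional, so an integral augmenting-path exchange does not apply directly. You should appeal to polyhedral M$^\natural$-concavity (concave plus submodular, hence DR on the integer points) or to a parametric LP argument. The paper's self-contained dual proof avoids exactly this machinery.

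Finally, the $\epsilon$ in the stationary case comes solely from the two-matroid step; nothing is discretized.
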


The following lemma, proven in Appendix \ref{app:eqlp} is key to our approach. It establishes that under the MNL model, the complex fluid approximation from \eqref{eq:fluid} is equivalent to a much simpler and more tractable linear program. This program is then modified to get a surrogate with the desired properties, similar to the approach of \citet{baietal2025}.

\begin{lemma}[LP Equivalence for MNL]\label{lem:mnl_lp_equiv}
Under the MNL choice model, the optimal value of the fluid approximation \eqref{eq:fluid}, denoted as $\textup{LP}(\boldsymbol{c})$, is equal to the optimal value of the following linear program:
\begin{equation}\label{eq:mnl_lp_compact}
\max_{\boldsymbol{y\ge0}, \boldsymbol{y}_0\boldsymbol{\ge0}} \left\{ \begin{array}{l@{\quad}l}
    \displaystyle\sum_{i \in \mathcal{N}}\sum_{j \in \mathcal{M}}\sum_{\ell \in \mathcal{L}_+} r^\ell_{ij}y^\ell_{ij} : & \displaystyle\sum_{j \in \mathcal{M}} y^\ell_{ij} \le c^\ell_i, \quad \forall i \in \mathcal{N}, \ell \in \mathcal{L}; \\
    & \displaystyle\sum_{i \in \mathcal{N}}\sum_{\ell \in \mathcal{L}_+} y^\ell_{ij} + y_{0j} \le \tau_j, \quad \forall j \in \mathcal{M}; \\
    & \displaystyle\sum_{\ell \in \mathcal{L}_+} y^\ell_{ij} \le v_{ij} y_{0j}, \quad \forall i \in \mathcal{N}, j \in \mathcal{M}
\end{array} \right\}
\end{equation}
where $y_{0j}$ represents the expected number of times a type-$j$ customer leaves without making a purchase.
\end{lemma}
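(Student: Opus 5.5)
We prove the equality by showing two inequalities: the value of \eqref{eq:fluid} is at most the value of \eqref{eq:mnl_lp_compact}, and conversely. The constraint $\sum_{j} y^\ell_{ij}\le c^\ell_i$ and the objective are identical in the two programs. So in both directions it suffices to handle, for each fixed type $j$, the link between the assortment variables $w_j(\cdot)$ and the aggregated purchase quantities $x_{ij}:=\sum_{\ell\in\mathcal L_+}y^\ell_{ij}$. The warehouse split of $x_{ij}$ into the $y^\ell_{ij}$ is carried over unchanged.

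\emph{Direction 1 (fluid $\le$ compact).} Take any feasible $(\boldsymbol w,\boldsymbol y)$ for \eqref{eq:fluid} and set
\[
y_{0j}=\sum_{S\subseteq\mathcal N} w_j(S)\,\frac{1}{1+\sum_{i'\in S}v_{i'j}}.
\]
Under MNL we have $\phi_{ij}(S)=v_{ij}\,\phi_{0j}(S)$ for $i\in S$ and $\phi_{ij}(S)=0$ for $i\notin S$. Hence
\[
x_{ij}=\sum_S w_j(S)\phi_{ij}(S)\le v_{ij}\sum_S w_j(S)\phi_{0j}(S)=v_{ij}y_{0j}.
\]
Moreover, $\sum_i x_{ij}+y_{0j}=\sum_S w_j(S)\bigl(\sum_{i\in S}\phi_{ij}(S)+\phi_{0j}(S)\bigr)=\sum_S w_j(S)=\tau_j$. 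So $(\boldsymbol y,\boldsymbol y_0)$ is feasible for \eqref{eq:mnl_lp_compact} with the same objective value.

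\emph{Direction 2 (compact $\le$ fluid).} This is the main step. Given a feasible $(\boldsymbol y,\boldsymbol y_0)$, we must build $w_j(\cdot)\ge0$ with $\sum_S w_j(S)\phi_{ij}(S)=x_{ij}$ exactly for all $i$ and $\sum_S w_j(S)=\tau_j$. The plan is the standard sales-based-LP decomposition.

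\emph{Step 2a: tighten the inequalities.} The second constraint may be slack, but $y_{0j}$ can then be increased until $\sum_i x_{ij}+y_{0j}=\tau_j$. This only loosens $x_{ij}\le v_{ij}y_{0j}$ and leaves the objective unchanged. If $y_{0j}=0$, then every $x_{ij}=0$ and we put all mass on $S=\varnothing$. Products with $v_{ij}=0$ have $x_{ij}=0$ and are simply never offered.

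\emph{Step 2b: nested decomposition.} Order the products with $v_{ij}>0$ by the ratio $x_{ij}/(v_{ij}y_{0j})\in[0,1]$ in decreasing order, say $\theta_{1}\ge\theta_{2}\ge\cdots\ge\theta_{n}$, and set $\theta_{n+1}=0$. Define nested sets $S_k=\{1,\dots,k\}$ and put
\[
w_j(S_k)=y_{0j}\Bigl(1+\sum_{i\in S_k}v_{ij}\Bigr)(\theta_k-\theta_{k+1})\ge0,
\qquad
w_j(\varnothing)=y_{0j}(1-\theta_1).
\]
Then $\sum_S w_j(S)\phi_{0j}(S)=y_{0j}\sum_k(\theta_k-\theta_{k+1})+y_{0j}(1-\theta_1)=y_{0j}$. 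For product $i$,
\[
\sum_S w_j(S)\phi_{ij}(S)=v_{ij}y_{0j}\sum_{k\ge i}(\theta_k-\theta_{k+1})=v_{ij}y_{0j}\theta_i=x_{ij}.
\]
Finally, $\sum_S w_j(S)=\sum_S w_j(S)\bigl(\phi_{0j}(S)+\sum_i\phi_{ij}(S)\bigr)=y_{0j}+\sum_i x_{ij}=\tau_j$ by Step~2a. So $(\boldsymbol w,\boldsymbol y)$ is feasible for \eqref{eq:fluid} with identical objective, and both directions together give the claimed equality.

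The only delicate points are the telescoping verification in Step~2b and the boundary cases $y_{0j}=0$ and $v_{ij}=0$. Everything else is bookkeeping.
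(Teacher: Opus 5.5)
Your proof is correct, and it takes a genuinely different route from the paper's. The paper argues entirely by duality. It dualizes \eqref{eq:fluid}, so that each type $j$ contributes a constraint $\sigma_j \ge \max_{S}\bigl(-\sum_i \phi_{ij}(S)\beta_{ij}\bigr)$. It replaces this single-customer MNL assortment problem by its exact LP relaxation, a fact imported by citation. It then dualizes that inner LP, substitutes it back, and dualizes the resulting master problem once more, asserting that the outcome simplifies to \eqref{eq:mnl_lp_compact}. You stay in the primal throughout. Direction~1 is the aggregation $y_{0j}=\sum_S w_j(S)\phi_{0j}(S)$. Direction~2 is the nested, sales-based decomposition. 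In effect it proves from scratch the polytope fact the paper relies on: the $\tau_j$-scaled convex hull of MNL choice vectors is exactly $\{x_{ij}\le v_{ij}y_{0j},\ \sum_i x_{ij}+y_{0j}=\tau_j\}$. The paper uses this implicitly when it invokes exactness of the single-customer relaxation. Your route is elementary and every step is checked explicitly, whereas the paper leaves its final dualization step to the reader. The warehouse split, including the virtual warehouse, is never touched, since you only manipulate the aggregates $x_{ij}$; in the dual chain those columns generate a sign condition $\beta_{ij}\le 0$ that must be tracked. Your argument is also constructive. From any optimal solution of \eqref{eq:mnl_lp_compact} it produces an optimal $\hat{\boldsymbol w}$ for \eqref{eq:fluid} supported on at most $n+1$ nested assortments per type. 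That is exactly the input Policies 1--3 need once $\boldsymbol c_{\text{alg}}$ is fixed, and it requires no column generation. The paper's argument yields only equality of optimal values, but it is phrased through dual prices, which matches the oracle and column-generation viewpoint the paper uses to solve \eqref{eq:fluid} for general choice models.
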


\subsection{A DR-Submodular Surrogate for the Fluid Optimum}

The core of our approximation algorithm is to reframe the stocking optimization problem as a maximization of a DR-submodular function, for which efficient approximation algorithms are known. However, the fluid optimum $\text{LP}(\boldsymbol{c})$ itself is known not to be DR-submodular even in a single warehouse setting, as discussed in~\citep{baietal2025}. Following the surrogate construction of \citet{baietal2025}, adapted here to customer-type-dependent profits, we introduce a closely related surrogate function $f_{app}(\boldsymbol{c})$, which approximates $\text{LP}(\boldsymbol{c})$ and possesses the required DR-submodularity. Intuitively, $f_{app}$ caps each customer type's expected sales and each product's MNL attraction at half their natural maxima, restoring diminishing returns in stocking while preserving a constant fraction of the fluid value.

Let $f_{app}(\boldsymbol{c})$ be the optimal value of the following linear program:
\begin{equation}\label{eq:f_app}
\max_{\boldsymbol{y\ge0}} \left\{ \begin{aligned}
    & \sum_{i \in \mathcal{N}} \sum_{j \in \mathcal{M}} \sum_{\ell \in \mathcal{L}_+} r^{\ell}_{ij}\,y^{\ell}_{ij} : && \sum_{j \in \mathcal{M}} y^{\ell}_{ij} \le c^{\ell}_i, \quad \forall i \in \mathcal{N}, \ell \in \mathcal{L}; \\
    & && \sum_{i \in \mathcal{N}}\sum_{\ell \in \mathcal{L}_+} y^{\ell}_{ij} \le \frac{\tau_j}{2}, \quad \forall j \in \mathcal{M}; \\
    & && \sum_{\ell \in \mathcal{L}_+} y^{\ell}_{ij} \le \frac{v_{ij}\,\tau_j}{2}, \quad \forall i \in \mathcal{N}, j \in \mathcal{M}
\end{aligned} \right\}.
\end{equation}
This function has two crucial properties, stated in the following lemmas.

\begin{lemma}[Approximation Property of $f_{app}$]\label{lem:fapp_approx}
For any inventory placement $\boldsymbol{c}$, the surrogate function $f_{app}(\boldsymbol{c})$ provides a $1/2$-approximation to the fluid optimum, i.e.,
\[ \frac{1}{2} \text{LP}(\boldsymbol{c}) \le f_{app}(\boldsymbol{c}) \le \text{LP}(\boldsymbol{c}). \]
\end{lemma}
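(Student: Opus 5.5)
The plan is to prove both inequalities by comparing the compact MNL program \eqref{eq:mnl_lp_compact} from Lemma~\ref{lem:mnl_lp_equiv} with the surrogate program \eqref{eq:f_app}. Throughout I use $\textup{LP}(\boldsymbol{c})$ to mean the optimal value of \eqref{eq:mnl_lp_compact}.

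\emph{Upper bound $f_{app}(\boldsymbol{c}) \le \textup{LP}(\boldsymbol{c})$.} Take any feasible $\boldsymbol{y}$ for \eqref{eq:f_app}. For each type $j$, set $y_{0j} = \tau_j/2$. The capacity constraints are identical in the two programs. The demand constraint holds because
\[
\sum_{i}\sum_{\ell\in\mathcal{L}_+} y^\ell_{ij} + y_{0j} \le \frac{\tau_j}{2} + \frac{\tau_j}{2} = \tau_j .
\]
The MNL constraint holds because
\[
\sum_{\ell\in\mathcal{L}_+} y^\ell_{ij} \le \frac{v_{ij}\tau_j}{2} = v_{ij} y_{0j}.
\]
So $(\boldsymbol{y},\boldsymbol{y}_0)$ is feasible for \eqref{eq:mnl_lp_compact} with the same objective value, and the inequality follows.

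\emph{Lower bound $\tfrac12\textup{LP}(\boldsymbol{c}) \le f_{app}(\boldsymbol{c})$.} Let $(\boldsymbol{y}^*,\boldsymbol{y}_0^*)$ be optimal for \eqref{eq:mnl_lp_compact}, and take the candidate $\tilde{\boldsymbol{y}} = \tfrac12 \boldsymbol{y}^*$. Its objective value is exactly $\tfrac12\textup{LP}(\boldsymbol{c})$, so it suffices to check feasibility in \eqref{eq:f_app}.
\begin{itemize}
\item Capacity: $\sum_j \tilde y^\ell_{ij} \le c^\ell_i/2 \le c^\ell_i$.
\item Per-type sales: $\sum_{i,\ell}\tilde y^\ell_{ij} \le \tfrac12(\tau_j - y^*_{0j}) \le \tau_j/2$.
\item Per-product attraction: $\sum_{\ell}\tilde y^\ell_{ij} \le \tfrac12 v_{ij} y^*_{0j}$. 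Since $y^*_{0j} \le \tau_j$ from the second constraint of \eqref{eq:mnl_lp_compact} together with nonnegativity of $\boldsymbol{y}$, this is at most $v_{ij}\tau_j/2$.
\end{itemize}
Hence $f_{app}(\boldsymbol{c}) \ge \tfrac12 \textup{LP}(\boldsymbol{c})$.

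Using the WLOG convention $r^\ell_{ij}y^\ell_{ij}\ge 0$ is not even needed here, since halving scales every term of the objective exactly. There is no genuine obstacle: the only substantive input is Lemma~\ref{lem:mnl_lp_equiv}. The one point to check carefully is the bound $y^*_{0j}\le\tau_j$ used for the attraction constraint, which follows directly from the demand constraint.
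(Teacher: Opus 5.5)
Your proposal is correct and follows the paper's proof essentially step for step. For the upper bound you pad a feasible solution of \eqref{eq:f_app} with $y_{0j}=\tau_j/2$; for the lower bound you halve an optimal solution of \eqref{eq:mnl_lp_compact}, using $y^*_{0j}\le\tau_j$ for the attraction constraint, with Lemma~\ref{lem:mnl_lp_equiv} as the only substantive input.
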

The proof is deferred to Appendix~\ref{app:fapp_approx}.

\begin{lemma}[DR-Submodularity of $f_{app}$]\label{lem:fapp_dr}
The function $f_{app}(\boldsymbol{c})$ is DR-submodular in $\boldsymbol{c}$. That is, for any integer vectors $\boldsymbol{c} \ge \boldsymbol{b} \ge \bf{0}$ and a unit integer vector $\boldsymbol{e}^{\ell}_i$, we have $$f_{app}(\boldsymbol{c}) + f_{app}(\boldsymbol{b} + \boldsymbol{e}_i^\ell) \ge f_{app}(\boldsymbol{c} + \boldsymbol{e}_i^\ell) + f_{app}(\boldsymbol{b}).$$
\end{lemma}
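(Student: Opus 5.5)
The plan is to recognize $f_{app}$ as the value of a maximum-weight flow in which $\boldsymbol{c}$ appears only as capacities of arcs into the sink. Then DR-submodularity follows from an exchange (augmenting-path) argument of the kind used for valuated transversal / gross-substitutes functions.

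\emph{Network reformulation.} First simplify. By the same reduction as after Lemma~\ref{lem:lpbound}, we may assume $r^\ell_{ij}y^\ell_{ij}\ge 0$. Virtual-warehouse flow earns zero and is only upper-bounded by the other two constraints, so setting $y^{L+1}_{ij}=0$ never lowers the objective; we therefore drop $\ell=L+1$. The network is:
\begin{itemize}
\item a source $s$ and an arc $s\to j$ of capacity $\tau_j/2$ for each type $j$;
\item an arc $j\to(i,j)$ of capacity $v_{ij}\tau_j/2$;
\item an arc $(i,j)\to(i,\ell)$ of unbounded capacity and weight $r^\ell_{ij}$;
\item an arc $(i,\ell)\to t$ of capacity $c^\ell_i$.
\end{itemize}
The three families of constraints in \eqref{eq:f_app} are exactly the capacity constraints of this network. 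Hence $f_{app}(\boldsymbol{c})$ is the maximum total weight of an $s$--$t$ flow (of any value) in which $\boldsymbol{c}$ only appears on the sink arcs.

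\emph{Exchange argument.} Fix integer $\boldsymbol{c}\ge\boldsymbol{b}$ and a unit vector $\boldsymbol{e}^\ell_i$. Take an optimal flow $y^{1}$ for $f_{app}(\boldsymbol{c})$ and an optimal flow $y^{2}$ for $f_{app}(\boldsymbol{b}+\boldsymbol{e}^\ell_i)$. The goal is to build feasible flows $\tilde y^{1}$ for $\boldsymbol{c}+\boldsymbol{e}^\ell_i$ and $\tilde y^{2}$ for $\boldsymbol{b}$ with $\tilde y^1+\tilde y^2=y^1+y^2$ arcwise, which gives the inequality. The steps are as follows.
\begin{enumerate}
\item If $y^2$ uses at most $b^\ell_i$ units on sink arc $(i,\ell)$, take $\tilde y^2=y^2$ and $\tilde y^1=y^1$.
\item Otherwise $y^2$ exceeds $b^\ell_i$ on that arc by some $\theta\in(0,1]$. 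Consider the difference $y^2-y^1$ in the residual sense and decompose it into paths and cycles (standard flow decomposition).
\item Find a collection of paths/cycles of $y^2-y^1$ carrying $\theta$ units through arc $(i,\ell)\to t$. Each such path either returns to $t$ through another sink arc $(i',\ell')$, or ends at $s$ or at a type arc $s\to j$ that has slack.
\item Move these $\theta$ units of path flow from $y^2$ to $y^1$: set $\tilde y^1=y^1+P$ and $\tilde y^2=y^2-P$. The sum is preserved, so the total weight is preserved.
\end{enumerate}

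\emph{Feasibility checks.} After the transfer:
\begin{itemize}
\item On $(i,\ell)$, $\tilde y^1$ gains at most $\theta\le 1$, which is allowed by $\boldsymbol{c}+\boldsymbol{e}^\ell_i$, and $\tilde y^2$ drops to $b^\ell_i$.
\item If the path exits through another sink arc $(i',\ell')$, then $\tilde y^2$ increases there. This is fine whenever $y^2$ had slack relative to $b^{\ell'}_{i'}$. If instead $y^2$ was tight there, then $y^1\le c^{\ell'}_{i'}$ must have been smaller on that arc (that is why the arc appears in $y^2-y^1$), and we reroute through it, using $\boldsymbol{c}\ge\boldsymbol{b}$.
\item Every internal arc's flow in $\tilde y^1,\tilde y^2$ stays between $\min(y^1,y^2)$ and $\max(y^1,y^2)$ along the decomposed paths, so the capacities $\tau_j/2$ and $v_{ij}\tau_j/2$ remain satisfied.
\end{itemize}

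\emph{Main obstacle.} The hard step is choosing the decomposition so that every path carrying the excess of $y^2$ on $(i,\ell)$ terminates somewhere $\tilde y^2$ stays feasible for $\boldsymbol{b}$. Concretely, this means ruling out a path that ends at a sink arc where $y^2$ is tight w.r.t.\ $\boldsymbol{b}$ while $y^1$ is also tight w.r.t.\ $\boldsymbol{c}$.
\begin{itemize}
\item First attempt: use conservation to argue that any path in the decomposition of $y^2-y^1$ ending in sink arc $(i',\ell')$ must have $y^2_{(i',\ell')}>y^1_{(i',\ell')}$ there. Combined with $y^1\le c^{\ell'}_{i'}$ and $\boldsymbol{b}\le\boldsymbol{c}$, that arc then has room in $\tilde y^1$.
\item Fallback: if this bookkeeping gets messy, argue via LP duality. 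Show that the dual optimal prices on sink arcs, viewed as functions of the capacities, are monotone nonincreasing. This is the standard fact that $f_{app}$ is the value of a linear transportation problem, hence an M$^\natural$-concave function of the sink capacities. Since $f_{app}$ is also concave along each coordinate direction, it is DR-submodular on the integer lattice.
\end{itemize}
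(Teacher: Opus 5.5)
Your network model is correct, but the exchange runs in the wrong direction. You start from optimal flows $y^1$ for $\boldsymbol c$ and $y^2$ for $\boldsymbol b+\boldsymbol e_i^\ell$ and try to produce feasible flows for $\boldsymbol c+\boldsymbol e_i^\ell$ and $\boldsymbol b$ with the same arcwise sum. That would prove $f_{app}(\boldsymbol c+\boldsymbol e_i^\ell)+f_{app}(\boldsymbol b)\ge f_{app}(\boldsymbol c)+f_{app}(\boldsymbol b+\boldsymbol e_i^\ell)$, which is the reverse of the lemma, and such a split generally does not exist. Take one type with $\tau_j/2=1$, one warehouse, and two products $i,i'$ with large $v_{ij},v_{i'j}$ and $r^\ell_{ij}=2$, $r^\ell_{i'j}=1$. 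Set $\boldsymbol b=(0,0)$, $\boldsymbol c=(0,1)$, and put the unit vector on $i$, so even $c_i^\ell=b_i^\ell$. Then $f_{app}(\boldsymbol c)+f_{app}(\boldsymbol b+\boldsymbol e_i^\ell)=1+2>2+0=f_{app}(\boldsymbol c+\boldsymbol e_i^\ell)+f_{app}(\boldsymbol b)$. Concretely, any $\tilde y^2$ feasible for $\boldsymbol b$ is zero, so $\tilde y^1=y^1+y^2$ would send $2$ units through $s\to j$, whose capacity is $1$. This is exactly your ``main obstacle'': the only cycle of $y^2-y^1$ through $(i,\ell)\to t$ leaves $t$ through the sink arc of $i'$. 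There $y^1>y^2$ (larger, not smaller as your feasibility bullet says), and $y^2$ is tight at $\boldsymbol b$. So the obstacle is an impossibility, not bookkeeping. Your first attempt cannot help either: it certifies room in $\tilde y^1$, which is never at risk since $\tilde y^1\le\max(y^1,y^2)\le\boldsymbol c+\boldsymbol e_i^\ell$ on sink arcs. The constraint that actually fails is $\tilde y^2\le\boldsymbol b$.

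Reversing the roles repairs the argument, and the obstacle disappears. Take $z^1$ optimal for $\boldsymbol c+\boldsymbol e_i^\ell$ and $z^2$ optimal for $\boldsymbol b$, and let $\delta=(z^1_{(i,\ell)}-c_i^\ell)^+\le 1$. If $\delta>0$, then $z^1-z^2\ge\delta$ on $(i,\ell)\to t$, because $z^2_{(i,\ell)}\le b_i^\ell\le c_i^\ell$. Add a return arc $t\to s$ and decompose $z^1-z^2$ into simple cycles conformal to it. Move cycles through $(i,\ell)\to t$ of total amount $\delta$ from $z^1$ to $z^2$. Each such cycle leaves $t$ either by the return arc or by a sink arc $(i',\ell')\to t$ on which $z^2>z^1$. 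On that arc the new $\boldsymbol c$-flow is at most $z^2_{(i',\ell')}\le b_{i'}^{\ell'}\le c_{i'}^{\ell'}$, and the new $(\boldsymbol b+\boldsymbol e_i^\ell)$-flow only decreases. On $(i,\ell)$ the new flows are at most $c_i^\ell$ and at most $b_i^\ell+1$, and interior arcs stay between $z^1$ and $z^2$. This gives the lemma by a short primal argument, a genuine alternative to the paper's dual proof. The paper writes $f_{app}(\boldsymbol c)=\min_{\boldsymbol\mu\ge 0}F(\boldsymbol c,\boldsymbol\mu)$ and shows $F(\boldsymbol c,\boldsymbol\mu)+F(\boldsymbol b,\boldsymbol\eta)\ge F(\boldsymbol c,\boldsymbol\mu\wedge\boldsymbol\eta)+F(\boldsymbol b,\boldsymbol\mu\vee\boldsymbol\eta)$: the linear term is handled by $\boldsymbol c\ge\boldsymbol b$, and each $G_j$ is submodular via the greedy knapsack dual. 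It then combines the minimizers for $\boldsymbol c$ and $\boldsymbol b+\boldsymbol e_i^\ell$ through meet and join. As written, your fallback only asserts the conclusion. Monotonicity of the sink prices in the capacities is a restatement of the lemma. The network-flow M$^\natural$-concavity theorem is usually stated for integral capacities, whereas $\tau_j/2$ and $v_{ij}\tau_j/2$ are arbitrary reals, so you would need its polyhedral version.
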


The proof of Lemma~\ref{lem:fapp_dr}, which requires a detailed analysis of the dual formulation, is presented in Appendix \ref{app:dr}. We are now ready to construct the approximation algorithm.

\subsection{The Algorithm for Theorem \ref{thm:jointconst}}

In this section we formalize the algorithm with approximation guarantees matching those in the theorem statement. The algorithm proceeds in two main stages:
\begin{enumerate}
    \item \textbf{Offline Planning:} First, we find a near-optimal stocking plan $\boldsymbol{c}_{\text{alg}}$ by approximately maximizing the DR-submodular surrogate function $f_{app}(\boldsymbol{c})$ subject to the capacity and supply constraints.
    \item \textbf{Online Execution:} Second, we run the appropriate assortment and fulfillment policy from Section \ref{sec:FixedStock}, guided by the fluid solution corresponding to this stocking plan $\boldsymbol{c}_{\text{alg}}$.
\end{enumerate}

\paragraph{Framework For Offline Planning: Maximizing a Submodular Set Function.}
To leverage standard results from submodular optimization, we first translate our problem from maximizing a function over the integer lattice to maximizing a set function. We create a ground set $E$ of discrete ``inventory units.'' For each product-warehouse pair $(i,\ell)$, we create a set $E_{i\ell}$ of $\min(K^{\ell}, C_i, T)$ identical units. The ground set is the disjoint union $E = \bigcup_{i,\ell} E_{i\ell}$. Any subset $S \subseteq E$ corresponds to a stocking plan $\boldsymbol{c}$ with $c_i^\ell = |S \cap E_{i\ell}|$. Note that we are allowed to limit each stocking quantity $c^{\ell}_i$ by $\min(K^{\ell}, C_i, T)$ without decreasing the maximum possible expected profit due to feasibility constraints and the fact that stocking more than $T$ units cannot increase expected profit, because no more than $T$ customers arrive over the selling horizon. This constraint ensures that the size of the constructed ground set is polynomial in the original input size.

We define a set function $g: 2^E \to \mathbb{R}_+$ corresponding to $f_{app}(\boldsymbol{c})$ as $g(S) = f_{app}(\boldsymbol{c}_S)$, where $\boldsymbol{c}_S$ is the vector of stockings induced by the set $S$. Because $f_{app}(\boldsymbol{c})$ is monotone and DR-submodular (Lemma~\ref{lem:fapp_dr}), the corresponding set function $g(S)$ is monotone and submodular. The capacity and supply constraints on $\boldsymbol{c}$ can now be modeled as matroid constraints on the set $S$.
\begin{itemize}
    \item \textbf{General Case:} The warehouse capacity constraints, $\sum_{i \in \mathcal{N}} c_i^\ell \le K^\ell$, define a partition matroid $\mathcal{M}_K$ on $E$. The product supply constraints, $\sum_{\ell \in \mathcal{L}} c_i^\ell \le C_i$, define a second partition matroid $\mathcal{M}_C$. The problem is to maximize a monotone submodular function $g(S)$ subject to the intersection of two matroids: $S \in \mathcal{I}(\mathcal{M}_K) \cap \mathcal{I}(\mathcal{M}_C)$. This problem admits a $(1/2-\epsilon)$-approximation algorithm, see \cite{lee2010submodular}.

    \item \textbf{Unlimited Product Supplies ($C_{\min} = +\infty$):} In this case, only the warehouse capacity constraints remain. The problem simplifies to maximizing a monotone submodular function subject to a single partition matroid constraint. This problem admits a randomized $(1-1/e)$-approximation, as well as a deterministic $(1-1/e-\epsilon)$-approximation, see \cite{calinescu2011maximizing}, \cite{BuchbinderFeldman2024}.
\end{itemize}

\paragraph{Deriving the Performance Guarantees.}

The final approximation factor $\beta$ is the product of three distinct performance guarantees: (1) the factor for the submodular maximization algorithm used in the planning stage, (2) the factor from Lemma~\ref{lem:fapp_approx} relating $f_{app}(\boldsymbol{c})$ to the true fluid optimum $\text{LP}(\boldsymbol{c})$, and (3) the performance guarantee of the chosen assortment and fulfillment policy from Section \ref{sec:FixedStock}. Multiplying these factors in each case yields exactly the guarantees stated in the theorem statement as shown in Appendix \ref{app:pg}.

\section{From Inventory-Agnostic to Inventory-Aware Policies}\label{sec:inventorybased}

The policies developed in the previous section are inventory-agnostic: they make assortment and fulfillment decisions based on a pre-determined fluid solution, only checking for stock at the final step. In this section, we show how to convert these policies into more practical inventory-aware policies that only offer products that are in stock, while provably preserving the same performance guarantees. 

The conversion relies on the following key result, which is an application of Proposition~3 in \cite{feng2021near} formulated in our notation; we include the derivation for completeness. It allows us to dynamically adjust a randomized assortment offering to account for a restricted set of available products, without altering the marginal choice probabilities for those available products.

\begin{lemma}\label{lem:dist_conversion}
Let $\phi_i(S)$ be a customer choice model satisfying weak substitutability. Let $p$ be a probability distribution over assortments $S \subseteq \mathcal{N}$ and let $P_i = \mathbb{E}_{S \sim p}[\phi_i(S)]$ for each product $i$. Assume that we can sample from $p$ in polynomial time. Then, for any set $\hat{S} \subseteq \mathcal{N}$, there exists a distribution $p'$ over subsets of $\hat{S}$ such that an assortment can be sampled from it in polynomial time, and for all products $i \in \hat{S}$, the marginal choice probability is preserved: $\mathbb{E}_{S \sim p'}[\phi_i(S)] = P_i$.
\end{lemma}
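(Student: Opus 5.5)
The plan is the standard ``sequential removal'' argument behind Proposition~3 of \cite{feng2021near}. Write the products outside $\hat S$ as $k_1,\dots,k_r$ and remove them one at a time. This produces distributions $p=p_0,p_1,\dots,p_r=p'$, where $p_s$ is supported on subsets of $\mathcal N\setminus\{k_1,\dots,k_s\}$. At each step we preserve the marginal choice probability of every product still present. The statement then follows by induction, since at the end all products of $\hat S$ remain and $p_r$ is supported on subsets of $\hat S$.

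\textbf{Single removal step.} Fix a distribution $q$ and a product $k$ to remove. We construct a sampler for $q'$ as follows. Draw $S\sim q$. If $k\notin S$, output $S$. If $k\in S$, we need a way to redistribute the choice probability $\phi_k(S)$ that product $k$ would have captured. Weak substitutability gives, for every $i\in S\setminus\{k\}$,
\[
\phi_i(S\setminus\{k\})\ge \phi_i(S).
\]
So simply deleting $k$ can only increase the choice probabilities of the other products in $S$.

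The key idea is to output a random subset $S'$ of $S\setminus\{k\}$ such that $\mathbb E[\phi_i(S')]=\phi_i(S)$ for all $i\in S\setminus\{k\}$. Products outside $S$ have choice probability zero in both cases, since $\phi_i(S)=0$ for $i\notin S$, so they are handled automatically.

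\textbf{Constructing $S'$.} We use a ``nested chain'' construction. Start from $A_0=S\setminus\{k\}$ and successively drop the product whose current probability ratio is largest, giving nested sets $A_0\supset A_1\supset\dots\supset\varnothing$. We then mix over this chain with nonnegative weights chosen so that each product's expected choice probability equals $\phi_i(S)$. This is the step I expect to be the main obstacle: verifying that the weights are nonnegative and sum to at most one, with the remainder placed on $\varnothing$.

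Weak substitutability is exactly what makes the inductive weight computation go through. The argument mirrors the proof in \cite{feng2021near}: at each level, the product with the largest ratio $\phi_i(S)/\phi_i(A_s)$ determines the weight placed on $A_s$, and monotonicity of the choice probabilities along the chain keeps every remaining ratio at most one.

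\textbf{Complexity and conclusion.} Each chain has length at most $n$, and each weight requires $O(n)$ evaluations of $\phi$. Composing the at most $n$ removal steps therefore gives a polynomial-time sampler for $p'$, built by calling the sampler for $p$ and then post-processing the draw. Finally, the induction shows that $\mathbb E_{S\sim p'}[\phi_i(S)]=P_i$ for every $i\in\hat S$.
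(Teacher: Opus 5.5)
Your skeleton is sound and rests on the same fact the paper uses: Proposition~3 of \cite{feng2021near}. It says that for any $S' \subseteq S$ one can sample a random $S'' \subseteq S'$ with $\mathbb{E}[\phi_i(S'')] = \phi_i(S)$ for all $i \in S'$. The paper applies it once. It draws $\bar S \sim p$, applies the proposition with $S' = \hat S \cap \bar S$, and concludes by the law of total expectation. Your one-product-at-a-time induction is therefore redundant. Nothing in the chain construction needs $S' = S\setminus\{k\}$, so taking $S' = S \cap \hat S$ directly removes the composition of $r$ samplers.

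The step you actually sketch, however, is stated backwards and fails as written. Suppose the weight on $A_0$ is the largest ratio $\max_i \phi_i(S)/\phi_i(A_0)$. Then every product with a smaller ratio receives $w_0\phi_i(A_0) > \phi_i(S)$ from $A_0$ alone, and no nonnegative weights on later sets can undo that overshoot. The ratios at later levels must also use residual targets rather than $\phi_i(S)$.

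The correct rule is as follows. Set $t_i^{(0)} = \phi_i(S)$ and $A_0 = S'$. Put weight $w_s = \min_{i \in A_s} t_i^{(s)}/\phi_i(A_s)$ on $A_s$, and update $t_i^{(s+1)} = t_i^{(s)} - w_s\phi_i(A_s) \ge 0$. Then drop the minimizers, i.e.\ the products with the smallest residual ratio, so each product leaves the chain exactly when its target is met.

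The bound $\sum_s w_s \le 1$ that you left open follows from the invariant $t_i^{(s)}/\phi_i(A_s) \le 1 - \sum_{s'<s} w_{s'}$ for all $i \in A_s$. It holds at $s=0$ because $\phi_i(S') \ge \phi_i(S)$ by weak substitutability. It propagates because $\phi_i(A_{s+1}) \ge \phi_i(A_s)$ gives $t_i^{(s+1)}/\phi_i(A_{s+1}) \le t_i^{(s)}/\phi_i(A_s) - w_s$. Hence $w_s \le 1 - \sum_{s'<s} w_{s'}$ at every level, and the leftover mass goes to $\varnothing$. Products with $\phi_i(A_s)=0$ have zero target by weak substitutability and can simply be dropped.
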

\begin{proof}
First, we sample an assortment $\bar{S}$ from $p$. Then we apply Proposition 3 from \cite{feng2021near} with $p_i = \phi_i(\bar{S}) \: \: \forall i \in \mathcal{N}$ and $  S = \hat{S} \cap \bar{S},$ sampling an assortment $\tilde{S}$. This sampling procedure is polynomial in time and induces a distribution $p'$ on subsets of $\hat{S}$ such that $\mathbb{E}_{S \sim p'}[\phi_i(S)] = P_i \: \: \forall i \in \hat S$ (by the law of total expectation).
\end{proof}

\subsection{Inventory-Aware Simple Policy}
We now present the inventory-aware counterpart to Policy 1.

\vspace{0.5\baselineskip}
\needspace{18\baselineskip}
\noindent\rule{\textwidth}{0.4pt}
\textbf{Policy 1-IA: Inventory-Aware Simple Policy}
\vspace{0.25\baselineskip}
\begin{itemize}[leftmargin=1.5em]
    \item \textbf{Input:} A feasible solution $(\hat{\boldsymbol{w}}, \hat{\boldsymbol{y}})$ to the fluid approximation \eqref{eq:fluid}.
    \item \textbf{Execution:} In each time period $t \in \mathcal{T}$, if a customer of type $j$ arrives:
    \begin{enumerate}[leftmargin=*,label=\arabic*.]
        \item \textbf{Pre-select Fulfillment Warehouses:} For each product $i \in \mathcal{N}$, independently sample a potential fulfillment warehouse $\ell_i \sim \rho_{ij}$, the fulfillment distribution over $\mathcal{L}_+$ defined in Policy 1.
        
        \item \textbf{Determine Available Set:} Identify the set of products that are currently in stock at their pre-selected, non-virtual warehouse: $S_t = \{i \in \mathcal{N} \mid \ell_i \in \mathcal{L} \text{ and } x_i^{\ell_i}(t) > 0\}$.
        
        \item \textbf{Construct and Offer a Feasible Assortment:}
            \begin{enumerate}[leftmargin=*,label=(\alph*)]
                \item Find a probability distribution $p'_{j,S_t}$ over assortments $S \subseteq S_t$ that preserves each available product's purchase probability, so that every available product $i \in S_t$ has the same purchase probability under $p'_{j,S_t}$ as under the original inventory-agnostic policy.
                \item Offer an assortment $\tilde{S} \sim p'_{j,S_t}$ to the customer.
            \end{enumerate}
        
        \item \textbf{Fulfill Order:} If the customer chooses product $i \in \tilde{S}$, fulfill the order from its pre-selected warehouse $\ell_i$. The corresponding stock is depleted by one.
    \end{enumerate}
\end{itemize}
\noindent\rule{\textwidth}{0.4pt}
\vspace{0.1cm}

Such a distribution $p'_{j,S_t}$ exists by Lemma~\ref{lem:dist_conversion}, applied with available set $\hat{S} = S_t$, type-$j$ choice model $\phi_{ij}$, and input distribution $q_j(S) = \hat{w}_j(S)/\tau_j$, which is the assortment distribution used by the inventory-agnostic policy. For every available product $i \in S_t$, it satisfies
\[  \mathbb{E}_{S \sim p'_{j,S_t}}[\phi_{ij}(S)] = \mathbb{E}_{S \sim q_j}[\phi_{ij}(S)] = \frac{\sum_{\ell' \in \mathcal{L}_+} \hat{y}^{\ell'}_{ij}}{\tau_j}.  \]
The first equality is the marginal-preservation guarantee of Lemma~\ref{lem:dist_conversion}; the second substitutes $q_j(S) = \hat{w}_j(S)/\tau_j$ and applies the flow-balance constraint $\sum_{S \subseteq \mathcal{N}} \phi_{ij}(S)\,\hat{w}_j(S) = \sum_{\ell \in \mathcal{L}_+} \hat{y}^{\ell}_{ij}$ of the fluid approximation~\eqref{eq:fluid}.

\noindent This policy has the same performance as the original inventory-agnostic one, which we formalize in the following theorem, proven by a coupling argument in Appendix \ref{app:ib}.

\begin{theorem}\label{thm:ib}
The expected total profit of the Inventory-Aware Simple Policy (Policy 1-IA) is equal to the expected total profit of Policy 1.
\end{theorem}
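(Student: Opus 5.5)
The plan is to show that the two policies induce the same Markov chain on the inventory state $\boldsymbol{x}$, with the same expected one-period reward in every state. We then conclude by induction over $t$. Concretely, fix a period $t$, a state $\boldsymbol{x}$ and an arriving type $j$. For each triple $(i,\ell)$ with $\ell\in\mathcal{L}$, we compute the probability $\pi^{\ell}_{ij}(\boldsymbol{x})$ that the period ends with one unit of product $i$ sold to the type-$j$ customer from physical warehouse $\ell$. We show this probability is the same under Policy 1 and Policy 1-IA. Every other outcome leaves $\boldsymbol{x}$ unchanged and earns zero profit under both policies. These outcomes are no arrival, no purchase, and, under Policy 1, a sale routed to or diverted to the virtual warehouse.

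\textbf{Step 1 (Policy 1).} The assortment is $\hat S\sim q_j$, the customer chooses $i$, and $\ell$ is drawn from $\rho_{ij}$ independently of $\hat S$. Hence
\[
\pi^{\ell}_{ij}(\boldsymbol{x})=\mathbb{E}_{S\sim q_j}[\phi_{ij}(S)]\,\rho_{ij}(\ell)\,\mathbbm{1}\{x^{\ell}_i>0\}=\frac{\sum_{\ell'\in\mathcal{L}_+}\hat y^{\ell'}_{ij}}{\tau_j}\,\rho_{ij}(\ell)\,\mathbbm{1}\{x^{\ell}_i>0\},
\]
using the flow-balance constraint of \eqref{eq:fluid}.

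\textbf{Step 2 (Policy 1-IA).} A sale of $i$ from $\ell$ occurs exactly when three things happen: $\ell_i=\ell$, $x^{\ell}_i>0$ (so that $i\in S_t$), and the customer picks $i$ from $\tilde S\sim p'_{j,S_t}$. We condition on the whole vector $(\ell_k)_{k\in\mathcal{N}}$, which determines $S_t$. On the event $i\in S_t$, the construction guarantees $\mathbb{E}_{S\sim p'_{j,S_t}}[\phi_{ij}(S)]=\sum_{\ell'}\hat y^{\ell'}_{ij}/\tau_j$, via Lemma~\ref{lem:dist_conversion} and the display following Policy 1-IA. The key point is that this value does not depend on the other draws $\ell_k$, $k\neq i$, or on which other products happen to be available. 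Averaging over $(\ell_k)_{k\ne i}$ and then over $\ell_i\sim\rho_{ij}$ therefore gives
\[
\pi^{\ell}_{ij}(\boldsymbol{x})=\rho_{ij}(\ell)\,\mathbbm{1}\{x^{\ell}_i>0\}\,\frac{\sum_{\ell'}\hat y^{\ell'}_{ij}}{\tau_j},
\]
which is identical to Step 1. We also note two facts. Policy 1-IA never sells from the virtual warehouse. And $\tilde S\subseteq S_t$, so every offered product is fulfillable, which confirms that the policy is inventory-aware and its transitions are well defined.

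\textbf{Step 3 (induction/coupling).} Averaging over $j$ with weights $\lambda_{jt}$, the two policies have the same transition kernel $\boldsymbol{x}\mapsto\boldsymbol{x}-\boldsymbol{e}^{\ell}_i$ and the same expected immediate profit $\sum_{j}\lambda_{jt}\sum_{i,\ell\in\mathcal{L}}r^{\ell}_{ij}\pi^{\ell}_{ij}(\boldsymbol{x})$, for every $t$ and $\boldsymbol{x}$. Both chains start from the same initial inventory $\boldsymbol{c}$. By induction on $t$, the distribution of the state at the start of every period is therefore the same under both policies, and so is the expected profit earned in each period. Summing over $t\in\mathcal{T}$ gives equality of the expected total profits. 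Equivalently, one can construct an explicit coupling that realizes the same (type, product, warehouse) outcome in each period; the kernel argument suffices.

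The main obstacle is Step 2: justifying that, conditional on the random available set $S_t$, the purchase probability of $i$ is exactly the unconditional marginal. This requires $S_t$ to be independent of the customer's choice randomness and of the randomness internal to Lemma~\ref{lem:dist_conversion}, and it requires the marginal-preservation property to hold for every realization of $S_t$ containing $i$. I would handle this by conditioning explicitly on $(\ell_k)_k$ and $\boldsymbol{x}$ before invoking the lemma.
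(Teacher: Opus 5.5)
Your proposal is correct and follows essentially the same route as the paper. The paper inducts on $t$ using the cumulative sales vector $(\bar{Z}^\ell_{ijt})_{i,j,\ell}$ as the conditioning state rather than the inventory $\boldsymbol{x}$. It shows that under both policies each triple $(i,j,\ell)$ is sold in period $t$ with probability $\frac{\lambda_{jt}}{\tau_j}\hat{y}^\ell_{ij}A^\ell_{it}(\boldsymbol{z})$, which is exactly your Step 1/Step 2 computation. Your explicit conditioning on the full pre-selection vector $(\ell_k)_{k\in\mathcal{N}}$ in Step 2 is a slightly more careful justification of the paper's brief appeal to ``the law of total probability.''
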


\subsection{Other Inventory-Aware Policies}

While we explicitly construct the inventory-aware counterpart only for Policy 1, same technique applies for other policies we introduced. To avoid duplication, in this subsection we briefly state the conversion for other policies.

\paragraph{Advanced Inventory-Aware Policy (2-IA)}
The only difference between Policy 2-IA and Policy 1-IA is that pre-select probabilities are scaled by $(1-H^\ell_{ij})$. The performance analysis remains the same with a minor modification of multiplying appropriate expressions by $(1-H^{\ell}_{ij})$.

\paragraph{Modified Inventory-Aware Policy (3-IA)}
The only difference between Policy 3-IA and Policy 1-IA is that stock availability check is done against the customer-specific virtual capacities $c^\ell_{ij}$. The performance analysis remains the same with a minor modification of including customer type parameter in stock indicators.

\subsection{Special Case: The MNL Model}
Under the MNL choice model, formally defined in the beginning of Section~\ref{sec:mnljoint}, the conversion can be done more directly without solving a linear program at each step for any of the policies.  While all other steps are done exactly as for a general choice model, in Step 3 instead of applying the algorithm from Lemma~\ref{lem:dist_conversion}, we simply sample $\tilde S$ from  $$p'_{j,S_t}(S) = \frac{\hat w_j(S) \cdot (1+v_j(S \cap S_t))}{\tau_j \cdot (1+v_j(S))} \quad \forall S \subseteq \mathcal{N}: S \neq \varnothing, \qquad p'_{j,S_t}(\varnothing) = 1 - \sum_{S \neq \varnothing} p'_{j,S_t}(S),$$ and offer $\tilde S \cap S_t$, where $v_j(A) = \sum_{i \in A} v_{ij}$ is the total preference weight of the products in set $A$. To see why the marginal choice probabilities are preserved, first note that $p'_{j,S_t}$ is a valid distribution: because $S \cap S_t \subseteq S$ we have $v_j(S \cap S_t) \le v_j(S)$, so each weight satisfies $p'_{j,S_t}(S) \le \hat w_j(S)/\tau_j$, and since $\sum_{S} \hat w_j(S) = \tau_j$ the residual mass assigned to $\varnothing$ is nonnegative. Now fix an available product $i \in S_t$ and consider the two sources of randomness, the sampled set $S \sim p'_{j,S_t}$ and the customer's choice from the offered set $S \cap S_t$. Since we offer $S \cap S_t$, product $i$ is chosen with probability $v_{ij}\,\mathbbm{1}[i \in S \cap S_t]/(1+v_j(S \cap S_t))$, and only realizations with $i \in S$ contribute because $i \in S_t$ is fixed. Averaging over $S$ using the definition of $p'_{j,S_t}(S)$, the probability that $i$ is chosen equals
$$\sum_{S:\,i \in S} \frac{\hat w_j(S)\,(1+v_j(S \cap S_t))}{\tau_j\,(1+v_j(S))}\cdot\frac{v_{ij}}{1+v_j(S \cap S_t)} = \sum_{S:\,i \in S} \frac{\hat w_j(S)}{\tau_j}\cdot\frac{v_{ij}}{1+v_j(S)} = \mathbb{E}_{S \sim p}[\phi_{ij}(S)],$$
the marginal probability required by Lemma~\ref{lem:dist_conversion}. Intuitively, trimming the sampled set $S$ down to the available set $S \cap S_t$ shrinks the MNL normalizer from $1+v_j(S)$ to $1+v_j(S \cap S_t)$; the weight $p'_{j,S_t}(S)$ carries the compensating factor $(1+v_j(S \cap S_t))/(1+v_j(S))$ that cancels this change exactly, leaving every available product's marginal choice probability equal to $\mathbb{E}_{S \sim p}[\phi_{ij}(S)]$. Because this correcting factor is available in closed form, no linear program is solved at any step, providing a highly efficient implementation.

\section{Computational Experiments}\label{sec:numerics}

In this section, we conduct a series of computational experiments to evaluate the empirical performance of our proposed algorithms. The primary goals of these experiments are (i) to validate our theoretical performance guarantees on a range of synthetically generated data, (ii) to compare the algorithms against a standard benchmark and enhance their performance via simple heuristics, and (iii) to understand the practical trade-offs between theoretical guarantees, and real-world computational complexity and performance. We first detail our experimental design, then describe the algorithms tested, and finally present and discuss the computational findings.

\subsection{Experimental Setup}
Every test instance contains $n=100$ products, $m=50$ customer types, $L=10$ warehouses, and is characterized by randomly generated model inputs, unique for each instance. In our experiments, each customer type is represented by a location, i.e., a point on the map. This representation is what makes the triadic (product, customer type, warehouse) profit heterogeneity clean to formulate: the distances from a customer type's location to the warehouses define the shipping costs, while the customer type's product preferences are carried by the customer type itself. Customer type and warehouse locations are defined by coordinates sampled uniformly from the square $[0, 10] \times [0, 10]$. The revenue $r_i$ for each product $i$ is sampled independently from $\text{Uniform}[10, 20]$, and the profit for fulfilling an order for product $i$ for a type-$j$ customer from warehouse $\ell$ is $r_{ij}^\ell = r_i - \text{dist}(\ell, j)$, where $\text{dist}(\ell, j)$ is the Euclidean distance between warehouse $\ell$ and customer type $j$'s location. The selling horizon consists of $T$ time periods, where we experiment with different values of $T$.

Customer choices follow the MNL model. To introduce heterogeneity, we first determine a consideration set $\mathcal{C}_j$ for each type $j$ by sampling a size $L_j \sim \text{Uniform}\{10, \dots, 40\}$ and then choosing a subset of $\mathcal{N}$ of that size uniformly at random. For each product $i \in \mathcal{C}_j$, we sample a raw preference weight $\tilde{v}_{ij} \sim \text{Uniform}[1, 10]$, and set $\tilde{v}_{ij}=0$ for $i \notin \mathcal{C}_j$. For half of the customer types, we reorder these raw weights in reverse order of the revenues $r_i$ to model price sensitivity, meaning that these customers assign higher preference to lower-priced (lower-revenue) products and thus are less willing to purchase more expensive ones. We fix the no-purchase weight at $v_{0j}=1$ and use the normalized preference weights
\begin{equation*}
v_{ij} = \frac{\tilde{v}_{ij}}{\frac{P_0}{1-P_0}\sum_{k\in\mathcal{C}_j}\tilde{v}_{kj}} \qquad \text{for each } i \in \mathcal{C}_j,
\end{equation*}
which ensures that, when the full consideration set $\mathcal{C}_j$ is offered, the no-purchase probability equals $P_0$, a parameter we vary across experiments.

To model non-stationarity, we make customer types with smaller consideration sets, i.e., ``pickier'' customers more likely to arrive later in the horizon. The arrival probability for type $j$ at time period $t$ is given by $\lambda_{jt} \propto \exp(-\gamma L_j(t - T/2))$, normalized so that $\sum_{j \in \mathcal{M}} \lambda_{jt} = 1$ in every period $t$. We choose the parameter $\gamma$ such that the minimum time-averaged market share across all customer types, $\min_{j \in \mathcal{M}} \frac{1}{T}\sum_t \lambda_{jt}$, equals $\theta_{\min}$.

To set realistic capacity and supply levels, which allow us to test our policies in structurally different settings,  we first calculate a baseline total demand $D_i$ for each product by assuming that, for each type $j \in \mathcal{M}$, the myopic optimal assortment $\widetilde S_j
=\arg\max_{S\subseteq\mathcal{N}}
\sum_{i\in S}r_{ij}\,\phi_{ij}(S)
$ is offered to each incoming customer of that type,
where $r_{ij} = \max_{\ell \in \mathcal{L}}r^{\ell}_{ij}$ is the best profit product $i$ can earn from a type-$j$ customer across all warehouses. Assuming each type-$j$ customer is shown its myopic optimal assortment $\widetilde S_j$, $D_i = \sum_{j \in \mathcal{M}} \tau_j\, \phi_{ij}(\widetilde S_j)$ is the expected number of purchases of product $i$. We then set the total product supply as $C_i = D_i / \rho$ and the capacity of each warehouse as $K^\ell = (\sum_{i \in \mathcal{N}} C_i / L) \cdot \alpha$. The parameters $\rho$ and $\alpha$ control the supply and capacity tightness, respectively. 

Varying $P_0 \in \{0.1,0.3\}$, $\theta_{\min} \in \{\frac{1}{200}, \frac{1}{100}\}$, $\rho \in \{1.0, 1.4, 2.0 \}$, and $\alpha \in \{0.6, 1.0, 1.4 \}$ yields 36 unique parameter configurations. We generate one instance for each configuration for $T = 8{,}000$ and $T = 100{,}000$. In additional experiments, generating multiple instances per parameter configuration produced qualitatively similar results, with comparable quantitative performance, at substantially higher computational cost; we therefore limit the scope to one instance per configuration. Customer type and warehouse locations, revenues, preference weights and consideration sets are generated separately for each instance.

\subsection{Tested Algorithms}

We test the empirical performance of five algorithms. Four closely follow the theoretical algorithms we propose: one from Section \ref{sec:mnljoint} and three variations of those from Section \ref{sec:joint}. These four are enhanced for efficient implementation; relative to the theoretical versions, we simplify the stocking optimization for computational tractability and experiment with different inventory-aware conversion heuristics. The fifth is a standard benchmark that relies on myopic optimization. Each algorithm has three components: (i) an offline stocking decision, (ii) a ``raw'' inventory-agnostic assortment and fulfillment policy, and (iii) a conversion rule that makes the raw policy inventory-aware.  We now provide a description of each algorithm.

\paragraph{DualGreedy and SubSampling (DGSS).}
This policy combines our algorithm from Section~\ref{sec:mnljoint} with the inventory-awareness technique from Section \ref{sec:inventorybased}. To lower computational complexity and improve empirical performance significantly, we make some simplifications.
\begin{itemize}[leftmargin=*]
    \item \textit{Stocking Decision}: To find a near-optimal stocking plan ${\boldsymbol{c}}_{alg}$, we approximately maximize the surrogate function $f_{app}(\boldsymbol{c})$ using a computationally efficient \textit{dual-greedy} heuristic. This heuristic iteratively allocates inventory to the product-warehouse pair with the highest corresponding dual variable in the LP \eqref{eq:f_app}. While this approach currently does not have a theoretical guarantee, it is based on the two following observations. First, it is a classical result that a simple greedy algorithm allows to optimize a monotone DR-submodular function on non-negative integer lattice subject to 2 partition matroid constraints with $1/3$-approximation. Second, $f_{app}(\boldsymbol{c}+{\boldsymbol{e}}^{\ell}_i) - f_{app}(\boldsymbol{c})$ can be approximated by $\frac{\partial f_{app}}{\partial c^{\ell}_i}(\boldsymbol{c})$, where it exists, which is, in turn, equal to the value of dual variable corresponding to $c^{\ell}_i$ in the LP formulation of $f_{app}(\boldsymbol{c})$.
    
    \item \textit{``Raw'' Personalization and Fulfillment Policy}: The raw policy is Policy 1 from Section \ref{sec:FixedStock} with optimal solution $(\boldsymbol{w}^*, \boldsymbol{y}^*)$ to LP \eqref{eq:fluid} with the stocking plan $\boldsymbol{c}_{alg}$. Note that while this policy has a strong theoretical guarantee of $(1-1/e)$-approximation in the case of stationarity, it does not have theoretical guarantees in the general case, which we are testing. However, as is common with algorithms like Policy 2, which artificially lower sales of products to provide a better theoretical guarantee, in practice it is more sensible not to perform such reduction. We will see that this approach pays off for our setting later in this section.
    
    \item \textit{Inventory-Aware Conversion}: The ``raw'' policy is made inventory-aware using the subsampling technique from Section \ref{sec:inventorybased}, which provably preserves the expected profit.
\end{itemize}

\paragraph{Rounding and SubSampling (RSS).}
This is a simplified version of the algorithms from Section~\ref{sec:joint}. We first solve the joint fluid LP \eqref{eq:relaxed_joint} to obtain an optimal fractional stocking plan $\boldsymbol{c}^*$. We then convert $\boldsymbol{c}^*$ into a feasible integer plan $\boldsymbol{c}_{alg}$ by randomized rounding: each product $i$ is stocked at the floor $\lfloor c^{*\ell}_i\rfloor$ in every warehouse $\ell$, and the few remaining units needed to reach $\lceil\sum_\ell c^{*\ell}_i\rceil$ (capped by the supply $C_i$) are assigned to warehouses sampled without replacement with probability proportional to the fractional parts $c^{*\ell}_i-\lfloor c^{*\ell}_i\rfloor$; any warehouse exceeding its capacity $K^\ell$ then drops its added units with the smallest fractional parts until feasible, so that both $\sum_{i\in\mathcal{N}} c^\ell_i\le K^\ell$ and $\sum_{\ell\in\mathcal{L}} c^\ell_i\le C_i$ hold. The assortment and fulfillment policy is the same as in DGSS.

\paragraph{Rounding and Drop-if-Unavailable (RDU).}
This is a modification of RSS which uses the same stocking algorithm and ``raw'' assortment and fulfillment policy but replaces subsampling with a simpler, more intuitive inventory-awareness rule. In each time period $t$, the algorithm determines the set of available products, namely those with positive stock in at least one warehouse where the profit is positive. It then removes all unavailable products from the ``raw'' assortment. If the customer chooses product $i$, the order is fulfilled from an available, positive-profit warehouse $\ell$ with probability proportional to the fluid variable $\hat{y}_{ij}^\ell$. Unlike subsampling, this inventory-awareness rule carries no proven performance guarantee.

\paragraph{Rounding and Drop-at-Selected-Warehouse (RDWH).}
This is another modification of RSS which uses the same stocking algorithm and ``raw'' assortment and fulfillment policy but another inventory-awareness rule. In each time period $t$, the policy first pre-selects a potential fulfillment warehouse $\ell_i$ for each product $i$ according to the fulfillment probabilities from the fluid solution. It then removes all products which are out of stock at their pre-selected warehouse from the ``raw'' assortment. If a customer makes a purchase, the order is fulfilled from the pre-selected warehouse. Unlike subsampling, this inventory-awareness rule carries no proven performance guarantee.

\paragraph{Benchmark (BM).}
This is a static benchmark policy based on myopic optimization. The stocking plan is determined by solving a mixed-integer program that allocates inventory to best serve the demand that would be generated by offering a fixed, myopically optimal assortment $\widetilde{S}_j$, defined earlier in this section, to each customer type $j$. Concretely, for each customer type $j$, the benchmark fixes the assortment offered to all customers of this type to its myopic optimal assortment $\widetilde{S}_j$, so that type $j$ generates an expected demand of $\tau_j\,\phi_{ij}(\widetilde{S}_j)$ units of product $i$, the expected number of type-$j$ customers who choose $i$ if we always show them $\widetilde{S}_j$. It then solves the MIP similar to the LP \eqref{eq:relaxed_joint} but with this fixed demand and the stocking levels $\boldsymbol{c}$ restricted to integers, placing inventory where it best serves that demand subject to the warehouse capacities $K^\ell$ and product supplies $C_i$. The resulting optimal $\boldsymbol{c}_{alg}$ is the benchmark stocking plan. The ``raw'' policy is static: always offer the myopic assortment $\widetilde{S}_j$ to a type-$j$ customer. The inventory-awareness conversion is also straightforward. In each time period $t$, the policy offers the subset of the myopic assortment $\widetilde{S}_j$, consisting of products that are currently in stock at any warehouse with a positive corresponding profit for the current customer type. Fulfillment is routed greedily to an available warehouse with the highest profit.

\subsection{Computational Results and Discussion}

We present the results of our computational experiments in Table~\ref{tab:perf_8000_100000}. Each row corresponds to one test instance, identified in the leftmost column by its parameter tuple $(P_0, \theta_{\min}, \rho, \alpha)$, where $P_0$ is the no-purchase probability, $\theta_{\min}$ the minimum time-averaged market share, and $\rho$ and $\alpha$ control the supply and capacity tightness, respectively. Expected profit of each policy is estimated by averaging over $100$ simulations for each policy. The performance of each policy is reported as a percentage of the upper bound on the optimal profit, which is obtained by solving the joint fluid LP \eqref{eq:relaxed_joint}. 

{ \scriptsize
 \setlength{\tabcolsep}{2pt}
 \begin{longtable}{p{2.4cm}ccccc c ccccc}
   \caption{Performance Comparison Across Different Policies and Instances (T = 8{,}000 vs.\ 100{,}000)}\\
   \label{tab:perf_8000_100000}\\
   \toprule
   \multirow{2}{*}{\shortstack[l]{\textbf{Instance}\\(P$_0,\theta_{\min},\rho,\alpha$)}}
     & \multicolumn{5}{c}{\textbf{T = 8{,}000}}
     & 
     & \multicolumn{5}{c}{\textbf{T = 100{,}000}} \\
   \cmidrule(r){2-6} \cmidrule(l){8-12}
     & \textbf{BM}
     & \textbf{DGSS}
     & \textbf{RSS}
     & \textbf{RDU}
     & \textbf{RDWH}
     & \hspace{1cm}
     & \textbf{BM}
     & \textbf{DGSS}
     & \textbf{RSS}
     & \textbf{RDU}
     & \textbf{RDWH} \\
   \midrule
 \endfirsthead

   \multicolumn{12}{@{}l}{\small\sl continued from previous page}\\
   \toprule
   \multirow{2}{*}{\shortstack[l]{\textbf{Instance}\\(P$_0,\theta_{\min},\rho,\alpha$)}}
     & \multicolumn{5}{c}{\textbf{T = 8{,}000}}
     & 
     & \multicolumn{5}{c}{\textbf{T = 100{,}000}} \\
   \cmidrule(r){2-6} \cmidrule(l){8-12}
     & \textbf{BM}
     & \textbf{DGSS}
     & \textbf{RSS}
     & \textbf{RDU}
     & \textbf{RDWH}
     & \hspace{1cm}
     & \textbf{BM}
     & \textbf{DGSS}
     & \textbf{RSS}
     & \textbf{RDU}
     & \textbf{RDWH} \\
   \midrule
 \endhead

   \midrule
   \multicolumn{12}{r}{\small\sl continued on next page}\\
 \endfoot

   \midrule
   \textbf{Average}
     & 88.51 & 81.16 & 88.31 & 92.47 & 92.74
     & 
     & 89.48 & 88.84 & 96.37 & 95.74 & 98.05 \\
   \bottomrule
 \endlastfoot

 (0.1,0.005,1.0,0.6)  & 90.05 & 89.08 & 90.40 & 94.75 & 94.44 & & 90.35 & 95.16 & 97.22 & 97.90 & 98.52 \\
 (0.1,0.005,1.0,1.0)  & 96.24 & 61.93 & 90.54 & 95.80 & 95.30 & & 98.63 & 65.37 & 97.30 & 98.80 & 98.69 \\
 (0.1,0.005,1.0,1.4)  & 95.94 & 63.58 & 91.17 & 96.00 & 95.56 & & 98.23 & 66.99 & 97.45 & 98.80 & 98.73 \\
 (0.1,0.005,1.4,0.6)  & 88.73 & 89.43 & 89.82 & 93.74 & 93.55 & & 89.15 & 96.09 & 96.82 & 95.73 & 98.20 \\
 (0.1,0.005,1.4,1.0)  & 91.79 & 78.97 & 89.80 & 94.69 & 94.98 & & 92.16 & 84.93 & 96.93 & 97.27 & 98.70 \\
 (0.1,0.005,1.4,1.4)  & 90.29 & 80.44 & 91.14 & 94.48 & 95.28 & & 90.79 & 85.70 & 97.40 & 97.65 & 98.81 \\
 (0.1,0.005,2.0,0.6)  & 83.27 & 86.89 & 89.58 & 94.52 & 93.66 & & 83.16 & 94.62 & 96.72 & 96.20 & 98.32 \\
 (0.1,0.005,2.0,1.0)  & 81.69 & 88.93 & 90.87 & 93.40 & 95.32 & & 81.80 & 95.22 & 97.18 & 96.14 & 98.60 \\
 (0.1,0.005,2.0,1.4)  & 86.23 & 87.81 & 89.61 & 92.04 & 94.16 & & 86.47 & 95.70 & 96.89 & 94.60 & 98.42 \\
 (0.1,0.01,1.0,0.6)   & 83.64 & 88.52 & 91.02 & 94.43 & 94.30 & & 83.66 & 94.36 & 97.41 & 97.48 & 98.53 \\
 (0.1,0.01,1.0,1.0)   & 94.68 & 65.92 & 90.52 & 95.49 & 95.00 & & 96.32 & 69.89 & 97.27 & 98.68 & 98.61 \\
 (0.1,0.01,1.0,1.4)   & 95.76 & 60.75 & 90.55 & 95.04 & 95.35 & & 98.57 & 64.41 & 97.35 & 98.58 & 98.72 \\
 (0.1,0.01,1.4,0.6)   & 81.47 & 88.06 & 89.85 & 94.32 & 93.61 & & 81.72 & 94.16 & 96.81 & 96.69 & 98.32 \\
 (0.1,0.01,1.4,1.0)   & 90.40 & 79.13 & 89.52 & 94.87 & 94.49 & & 90.93 & 85.56 & 96.86 & 96.75 & 98.53 \\
 (0.1,0.01,1.4,1.4)   & 92.41 & 79.68 & 90.31 & 94.53 & 95.17 & & 92.66 & 85.65 & 96.81 & 96.49 & 98.70 \\
 (0.1,0.01,2.0,0.6)   & 82.36 & 88.57 & 88.85 & 92.20 & 93.08 & & 82.53 & 95.52 & 96.37 & 95.27 & 98.13 \\
 (0.1,0.01,2.0,1.0)   & 83.82 & 87.94 & 88.21 & 91.22 & 94.32 & & 84.11 & 95.30 & 96.33 & 94.84 & 98.52 \\
 (0.1,0.01,2.0,1.4)   & 86.85 & 87.83 & 88.76 & 91.96 & 94.04 & & 87.19 & 95.74 & 96.76 & 95.05 & 98.43 \\
 (0.3,0.005,1.0,0.6)  & 89.13 & 85.15 & 86.84 & 92.05 & 90.92 & & 90.03 & 94.89 & 96.05 & 95.75 & 97.55 \\
 (0.3,0.005,1.0,1.0)  & 94.05 & 74.94 & 87.60 & 93.45 & 92.12 & & 97.11 & 82.10 & 96.48 & 98.14 & 97.85 \\
 (0.3,0.005,1.0,1.4)  & 94.03 & 70.55 & 87.68 & 92.89 & 92.67 & & 97.84 & 77.32 & 96.43 & 97.86 & 97.92 \\
 (0.3,0.005,1.4,0.6)  & 84.99 & 84.75 & 86.36 & 90.52 & 89.67 & & 85.38 & 94.51 & 95.63 & 94.40 & 97.22 \\
 (0.3,0.005,1.4,1.0)  & 91.64 & 85.11 & 87.77 & 92.78 & 92.22 & & 92.22 & 94.34 & 95.90 & 96.08 & 97.89 \\
 (0.3,0.005,1.4,1.4)  & 92.08 & 84.86 & 87.04 & 92.32 & 91.78 & & 92.98 & 94.44 & 95.95 & 96.50 & 97.75 \\
 (0.3,0.005,2.0,0.6)  & 84.73 & 83.82 & 86.08 & 87.80 & 89.07 & & 85.29 & 95.29 & 95.65 & 91.21 & 97.38 \\
 (0.3,0.005,2.0,1.0)  & 84.62 & 82.98 & 85.07 & 89.15 & 90.14 & & 85.11 & 93.72 & 95.08 & 92.64 & 97.43 \\
 (0.3,0.005,2.0,1.4)  & 85.53 & 84.59 & 86.79 & 90.65 & 90.92 & & 85.68 & 94.31 & 95.77 & 92.82 & 97.66 \\
 (0.3,0.01,1.0,0.6)   & 89.42 & 85.47 & 87.28 & 92.22 & 90.57 & & 90.66 & 95.25 & 96.26 & 96.70 & 97.47 \\
 (0.3,0.01,1.0,1.0)   & 92.73 & 72.37 & 86.15 & 92.52 & 91.45 & & 95.38 & 79.98 & 96.07 & 97.89 & 97.66 \\
 (0.3,0.01,1.0,1.4)   & 93.62 & 70.14 & 86.87 & 92.60 & 92.01 & & 97.06 & 76.80 & 96.25 & 97.86 & 97.83 \\
 (0.3,0.01,1.4,0.6)   & 86.19 & 84.36 & 87.10 & 90.30 & 90.14 & & 86.58 & 94.67 & 95.63 & 93.34 & 97.39 \\
 (0.3,0.01,1.4,1.0)   & 88.60 & 84.77 & 86.15 & 91.22 & 91.20 & & 89.13 & 93.57 & 95.76 & 94.84 & 97.66 \\
 (0.3,0.01,1.4,1.4)   & 87.98 & 84.33 & 86.14 & 90.54 & 91.18 & & 88.87 & 94.38 & 95.42 & 95.05 & 97.59 \\
 (0.3,0.01,2.0,0.6)   & 83.12 & 84.94 & 86.35 & 88.11 & 89.86 & & 83.52 & 95.00 & 95.69 & 91.48 & 97.27 \\
 (0.3,0.01,2.0,1.0)   & 83.44 & 82.03 & 84.65 & 85.45 & 90.21 & & 84.28 & 93.57 & 93.78 & 88.75 & 97.46 \\
 (0.3,0.01,2.0,1.4)   & 85.02 & 83.12 & 86.67 & 90.81 & 90.96 & & 85.63 & 93.72 & 95.78 & 92.55 & 97.55 \\
\end{longtable}
}
\vspace{0.5\baselineskip}

Our analysis of the computational results, presented in Table \ref{tab:perf_8000_100000}, highlights several key insights. First, the policies using a stocking plan derived from the joint fluid LP \eqref{eq:relaxed_joint}, namely RSS, RDU, and RDWH, are highly efficient: RDU and RDWH outperform the benchmark on average at both horizons, while RSS is on par with the benchmark at the shorter horizon and substantially exceeds it at the longer one. This provides strong empirical support for our joint fluid-based optimization approach from Section~\ref{sec:joint}. This effectiveness is particularly pronounced as the problem scales, which aligns with our asymptotic theoretical guarantees in Section \ref{sec:joint}. For instance, the average performance of the RSS algorithm, the closest to our asymptotic theoretical algorithms, jumps by 8 percentage points (from 88.31\% to 96.37\%) as the horizon $T$ increases from $8{,}000$ to $100{,}000$, while the static benchmark remains flat. This is consistent with the theoretical guarantee that, as the problem scales, the optimum converges to our fluid approximations, which our policies effectively exploit.

\enlargethispage{\baselineskip}
We also observe a distinction between the two main theoretical approaches. The DGSS policy, implementing the DR-submodular framework from Section \ref{sec:mnljoint} via a dual-greedy heuristic, is outperformed by the rounding-based methods, which correspond to the algorithms from Section \ref{sec:joint}. The results suggest that the main point of weakness of DGSS is the substantial instance-to-instance variation in the objective value attained by the submodular optimization approximation algorithm. This suggests that while the DR-submodular $f_{app}$ function is a powerful theoretical tool, the combination of a $1/2$-approximation in the surrogate (Lemma \ref{lem:fapp_approx}) and the need for complex submodular optimization makes it less robust in practice than the joint fluid-based approach.

At the same time, relative to the benchmark and to its theoretical guarantees, DGSS still performs excellently, far exceeding the guarantees from Theorem \ref{thm:jointconst} and having average performance similar to the benchmark on $T=100{,}000$. Keeping in mind that synthetic data construction inherently favors the benchmark due to underlying synergy (the supply levels $C_i = D_i/\rho$ and warehouse capacities are calibrated from the baseline demand $D_i$ generated by the same myopic optimal assortments $\widetilde S_j$ that the benchmark offers), these results are very encouraging. 

In addition, we gain practical insights from comparing the inventory-aware conversion heuristics. For $T=100{,}000$, the theoretically-sound subsampling-based algorithm (RSS) achieves an excellent 96.37\%. However, the RDWH algorithm, which pre-selects a fulfillment decision according to the fluid solution, performs even better at 98.05\%. This is in line with the intuitive understanding that subsampling tends to be overly conservative in practice, causing the policy to show the empty assortment too often to prevent overselling. At the same time, the RDU algorithm, which only filters products that are out of stock everywhere, while having a strong performance for smaller $T$,  performs worst of the three when $T$ grows to $100{,}000$, indicating that it tends to oversell products from distant warehouses. 

Finally, note that while to get theoretical guarantees without the stationarity assumption we need to artificially reduce demand by scaling fluid solutions, and there are instances when algorithms with unscaled input perform arbitrarily badly, in practice it is best not to apply such demand reduction. Overall, the outstanding performance of the joint fluid-based algorithms, combined with excellent performance of the submodular surrogate approach, suggests that carefully coordinating the inventory placement, assortment personalization, and order fulfillment decisions is both theoretically grounded and effective in practice.

\section{Conclusion}\label{sec:concl}

In this work, we studied the joint inventory placement, assortment personalization, and order fulfillment problem motivated by online retailers serving geographically dispersed customers and operating networks of warehouses. To the best of our knowledge, we provide the first performance guarantees for this joint problem with distributed warehouses and profit heterogeneity incorporating customer characteristics and fulfillment locations.
For general choice models, our algorithms are asymptotically optimal as the minimum warehouse capacity $K^{\min}$ and minimum product supply $C_{\min}$ grow, attaining a square-root convergence rate that notably improves on the cubic-root rate of \citet{baietal2025}, whose guarantee applies only to the much less general single-warehouse setting with profits depending only on products. We provide constant-factor approximation algorithms for the MNL choice model and our advanced assortment and fulfillment policy for the non-stationary case introduces a novel, sophisticated selective demand reduction method, which is far more efficient than the classical scaling approach based on Markov's inequality.

Several interesting directions for future research emerge from our work. First, while we provide the first constant-factor guarantees for the joint problem under MNL, improving these constants remains an important direction. Particularly, one can try enhancing advanced assortment and fulfillment policy or introducing a completely new approach without artificial demand reduction. Second, a significant open question is whether a constant-factor approximation exists for a general choice model under assortment oracle assumption, bridging the gap between our asymptotic results and the MNL case. Third, one can study upper bounds on possible polynomial approximation ratio conditioned on $\text{P} \neq \text{NP}$. Fourth, one could work on extending the results to a setting when products are reusable, for example when products return to the firm in their original state after a random duration (see, for example, \cite{HuangFeldmanZhang2024}). Finally, our model assumes customer types are known upon arrival. Another interesting direction would be to develop robust algorithms that can handle uncertainty or misidentification of customer types, which is a practical challenge for online platforms.

\addcontentsline{toc}{section}{Bibliography}
\bibliographystyle{plainnat}
\bibliography{biblio}

\appendix
\appendix
\makeatletter
\gdef\thesection{\@Alph\c@section}
\renewcommand{\@seccntformat}[1]{\appendixname\ \csname the#1\endcsname:~} 
\makeatother
\section{Proof of Lemma \ref{lem:lpbound}}\label{lpbound}
Consider any feasible assortment and fulfillment policy. Let $Y^{\ell}_{ij}$ denote the number of type-~$j$ customers who purchase product $i$ from warehouse $\ell$, and let $\hat{y}^{\ell}_{ij} = \mathbb{E}[Y^{\ell}_{ij}]$. Similarly, let $W_j(S)$ denote the number of times assortment $S$ is shown to type-$j$ customers, and let $\hat{w}_j(S) = \mathbb{E}[W_j(S)]$. We show that $(\hat{\boldsymbol{w}}, \hat{\boldsymbol{y}})$ is feasible for \eqref{eq:fluid} and its objective value equals the policy's expected profit.
\vspace{0.5\baselineskip}

\noindent\textit{Verifying the first constraint of \eqref{eq:fluid}:} Since a policy cannot sell more than the available inventory, we have $\sum_{j \in \mathcal{M}}Y^{\ell}_{ij} \le c^{\ell}_i$ (almost surely) for all $i \in \mathcal{N}, \ell \in \mathcal{L}$. Taking expectations yields $\sum_{j \in \mathcal{M}}\hat{y}^{\ell}_{ij} \le c^{\ell}_i$.
\vspace{0.5\baselineskip}

\noindent\textit{Verifying the second constraint of \eqref{eq:fluid}:} Let $Y_{ij} = \sum_{\ell \in \mathcal{L}_+}Y_{ij}^{\ell}$ be the total sales of product $i$ to type-$j$ customers. Let $I_{ijt}$ be the indicator that a type-$j$ customer arrives at time period $t$ and chooses product $i$. Also, let $W_{jt}(S)$ be the indicator that a type-$j$ customer arrives at time period $t$ and is offered assortment $S$. The expected sales of product $i$ to type-$j$ customers can be written as
\begin{align*}
\sum_{\ell\in\mathcal{L}_+}\hat{y}^{\ell}_{ij}
&= \mathbb{E}[Y_{ij}]
= \sum_{t\in\mathcal{T}}\mathbb{E}[I_{ijt}] \\
&= \sum_{t\in\mathcal{T}}\sum_{S\subseteq\mathcal{N}}\mathbb{P}(I_{ijt}=1\mid W_{jt}(S)=1)\mathbb{P}(W_{jt}(S)=1) \\
&= \sum_{t\in\mathcal{T}}\sum_{S\subseteq\mathcal{N}}\phi_{ij}(S)\mathbb{E}[W_{jt}(S)] = \sum_{S\subseteq\mathcal{N}}\phi_{ij}(S)\hat{w}_{j}(S).
\end{align*}

\noindent\textit{Verifying the third constraint of \eqref{eq:fluid}:} Since some assortment is shown to every arriving customer, $\sum_{S \subseteq \mathcal{N}} W_j(S)$ equals the total number of type-$j$ arrivals. Taking expectations and using $\tau_j = \sum_{t \in \mathcal{T}} \lambda_{jt}$ yields $\sum_{S \subseteq \mathcal{N}}\hat{w}_j(S) = \tau_j$.
\vspace{0.5\baselineskip}

\noindent Since $W_j(S) \ge 0$ and $Y^{\ell}_{ij} \ge 0$, we have $\hat{w}_j(S) \ge 0$ and $\hat{y}^{\ell}_{ij} \ge 0$. Thus, $(\hat{\boldsymbol{w}}, \hat{\boldsymbol{y}})$ is feasible for \eqref{eq:fluid}. The policy's expected profit is $\mathbb{E}\left[\sum_{i\in\mathcal N}\sum_{j\in\mathcal M}\sum_{\ell\in\mathcal L_+}r^{\ell}_{ij}Y^{\ell}_{ij}\right] = \sum_{i\in\mathcal N}\sum_{j\in\mathcal M}\sum_{\ell\in\mathcal L_+}r^{\ell}_{ij}\hat{y}^{\ell}_{ij},$ which is the objective value of \eqref{eq:fluid} at this solution.

\section{Proof of Lemma \ref{lem:expected_demand_match}}\label{exp_dem_match}
The expected demand $\mathbb{E}[Z^\ell_{ij}]$ is the sum over all time periods of the probability of a type-$j$ customer arriving, being offered an assortment $S$, choosing product $i$, and having that choice assigned to warehouse $\ell$. Therefore, we have
\[
\mathbb{E}[Z^\ell_{ij}] = \sum_{t\in\mathcal{T}} \lambda_{jt} \sum_{S\subseteq\mathcal{N}} \left(\frac{\hat{w}_j(S)}{\tau_j}\right) \phi_{ij}(S) \left(\frac{\hat{y}^\ell_{ij}}{\sum_{\ell' \in \mathcal{L}_+} \hat{y}^{\ell'}_{ij}}\right).
\]
Factoring out terms that do not depend on $t$ and using the definition $\tau_j = \sum_{t\in\mathcal{T}} \lambda_{jt}$, the expression simplifies to
\[
\mathbb{E}[Z^\ell_{ij}] = \frac{\tau_j}{\tau_j} \left(\sum_{S\subseteq\mathcal{N}} \hat{w}_j(S) \phi_{ij}(S)\right) \left(\frac{\hat{y}^\ell_{ij}}{\sum_{\ell' \in \mathcal{L}_+} \hat{y}^{\ell'}_{ij}}\right).
\]
From the flow conservation constraint in the fluid approximation \eqref{eq:fluid}, we know that $\sum_{S\subseteq\mathcal{N}} \hat{w}_j(S) \phi_{ij}(S) = \sum_{\ell' \in \mathcal{L}_+} \hat{y}^{\ell'}_{ij}$. Substituting this into the equation above yields
\[
\mathbb{E}[Z^\ell_{ij}] = \left(\sum_{\ell' \in \mathcal{L}_+} \hat{y}^{\ell'}_{ij}\right) \left(\frac{\hat{y}^\ell_{ij}}{\sum_{\ell' \in \mathcal{L}_+} \hat{y}^{\ell'}_{ij}}\right) = \hat{y}^\ell_{ij}.
\]
The second statement in the lemma follows directly by summing over $j \in \mathcal{M}$.
\section{Proof of Technical Lemmas from Section \ref{prelem}}
In this appendix, we treat product $i$ and a warehouse $\ell$ as fixed and, for notational convenience, whenever we say ``demand'',``sale(s)'', or ``profit'', we only consider demand/sale(s) for product $i$ from warehouse $\ell$.
\subsection{Proof of Claim \ref{lem:cond_prob}}\label{claim3.6}
We first compute joint probability of a type-$j$ arrival and a demand at a given time period $t$.
\begin{align*}
\mathbb{P}(D_{jt} = 1, I^\ell_{it} = 1) &= \lambda_j \sum_{S\subseteq\mathcal{N}} \left(\frac{\hat{w}_j(S)}{\tau_j}\right) \phi_{ij}(S) \left(\frac{\hat{y}^\ell_{ij}}{\sum_{\ell' \in \mathcal{L}_+} \hat{y}^{\ell'}_{ij}}\right) \\
&= \frac{\lambda_j}{\tau_j} \left(\sum_{S\subseteq\mathcal{N}} \hat{w}_j(S) \phi_{ij}(S)\right) \left(\frac{\hat{y}^\ell_{ij}}{\sum_{\ell' \in \mathcal{L}_+} \hat{y}^{\ell'}_{ij}}\right).
\end{align*}
From stationarity it follows that $\tau_j = T \lambda_j$, so $\lambda_j/\tau_j = 1/T$. Furthermore, by the flow constraint in \eqref{eq:fluid}, $\sum_{S\subseteq\mathcal{N}} \hat{w}_j(S) \phi_{ij}(S) = \sum_{\ell' \in \mathcal{L}_+} \hat{y}^{\ell'}_{ij}$. Substituting these gives
\[
\mathbb{P}(D_{jt} = 1, I^\ell_{it} = 1) = \frac{1}{T} \left(\sum_{\ell' \in \mathcal{L}_+} \hat{y}^{\ell'}_{ij}\right) \left(\frac{\hat{y}^\ell_{ij}}{\sum_{\ell' \in \mathcal{L}_+} \hat{y}^{\ell'}_{ij}}\right) = \frac{\hat{y}^\ell_{ij}}{T}.
\]
Next, we find the marginal probability of a demand at time period $t$ by summing over all customer types:
\[
\mathbb{P}(I^\ell_{it} = 1) = \sum_{j' \in \mathcal{M}} \mathbb{P}(D_{j't} = 1, I^\ell_{it} = 1) = \frac{1}{T}\sum_{j' \in \mathcal{M}}\hat{y}^\ell_{ij'}.
\]
The conditional probability is the ratio of the joint and marginal probabilities, which yields the desired result.

\subsection{Proof of Lemma~\ref{exp min bound}}\label{app:expmin}

Let $p^\ell_{it} = \mathbb{P}(\text{demand for }(i,\ell)\text{ at time period } t)$. The expected total  demand is $\mathbb{E}[Z_i^\ell] = \sum_{t \in \mathcal{T}} p^\ell_{it}$. To lower-bound the expected sales, we compare our policy to a more constrained process. Consider a system with $c_i^\ell$ distinct bins, each holding one unit of product $i$. When a demand occurs at time period $t$, we select one of the $c_i^\ell$ bins uniformly at random to fulfill it. A sale is made only if the chosen bin is not empty. By a coupling argument on a common demand sequence, this ``randomized bin'' policy has expected sales less or equal than those of our original policy: it fails to sell to an incoming demand whenever the randomly chosen bin is already empty, even if other units of product $i$ remain in stock at warehouse $\ell$, whereas our original policy sells as long as any unit remains.

The probability that a specific bin is not chosen at time period $t$ is $1 - p^\ell_{it}/c_i^\ell$. The probability it is never chosen over the horizon is $\prod_{t \in \mathcal{T}}(1 - p^\ell_{it}/c_i^\ell)$. Thus, the probability it is chosen at least once is $1 - \prod_{t \in \mathcal{T}}(1 - p^\ell_{it}/c_i^\ell)$. The total expected sales for the bin policy is $c_i^\ell$ times this probability. Since our policy sells at least as much as the bin policy by the coupling argument, its expected sales satisfy the following chain of inequalities:
\begin{align*}
\mathbb{E}[\bar{Z}_i^\ell] &\ge c_i^\ell \left(1 - \prod_{t \in \mathcal{T}}\left(1 - \frac{p^\ell_{it}}{c_i^\ell}\right)\right) \\
&\ge c_i^\ell \left(1 - \left(\frac{1}{T}\sum_{t \in \mathcal{T}}\left(1 - \frac{p^\ell_{it}}{c_i^\ell}\right)\right)^T\right) \\
&= c_i^\ell \left(1 - \left(1 - \frac{\sum_{t \in \mathcal{T}} p^\ell_{it}}{T c_i^\ell}\right)^T\right) \\
&\ge c_i^\ell \left(1 - e^{-\sum_{t \in \mathcal{T}} p^\ell_{it} / c_i^\ell}\right) = c_i^\ell \left(1 - e^{-\mathbb{E}[Z_i^\ell]/c_i^\ell}\right).
\end{align*}
Here, the second inequality is the AM-GM inequality applied to the $T$ nonnegative factors $1 - p^\ell_{it}/c_i^\ell$, and the final inequality follows from the fact that $(1-x/n)^n \le e^{-x}$ for $x \ge 0, n \in \mathbb{N}$.

\subsection{Proof of Lemma~\ref{1e_e}}\label{app:1e}
From Lemma~\ref{exp min bound}, we have $\mathbb{E}[\bar{Z}_i^\ell] \ge c_i^\ell(1 - e^{-\mathbb{E}[Z_i^\ell]/c_i^\ell})$. Let $x = \mathbb{E}[Z_i^\ell]/c_i^\ell$. Since we assume $c_i^\ell \ge \mathbb{E}[Z_i^\ell]$, we have $0 \le x \le 1$. The function $f(x)=1-e^{-x}$ is concave on $[0,1]$, so it lies above the chord connecting $(0,0)$ and $(1, 1-1/e)$. Therefore, $1-e^{-x} \ge (1-1/e)x$ for $x \in [0,1]$. Applying this, we get:
\[
\mathbb{E}[\bar{Z}_i^\ell] \ge c_i^\ell \left( (1-1/e) \frac{\mathbb{E}[Z_i^\ell]}{c_i^\ell} \right) = (1-1/e)\mathbb{E}[Z_i^\ell].
\]
By definition, $\mathbb{E}[\bar{Z}_i^\ell] = \mathbb{E}[Z_i^\ell] - \mathbb{E}[(Z_i^\ell - c_i^\ell)^+]$. Substituting this into the inequality and rearranging yields the desired result.

\section{Technical results from Section \ref{sec:proofadv}}

\subsection{Proof of Claim \ref{claimadv}}\label{app:claimadv}
Suppose for contradiction that the sum is less than $\frac{1}{2}\beta'y$. The expected total profit  loss from the $h$-parameters can be decomposed and bounded using the definitions of $u'$ and the case condition:
\begin{align*}
\alpha r &= \sum_{s=1}^{u'-1} r^\ell_{ij_s}h^\ell_{ij_s}\hat{y}^\ell_{ij_s} + \sum_{s=u'}^u r^\ell_{ij_s}h^\ell_{ij_s}\hat{y}^\ell_{ij_s} \\
&< (1-\epsilon(\beta'))\frac{\alpha r}{\beta'y} \sum_{s=1}^{u'-1}h^\ell_{ij_s}\hat{y}^\ell_{ij_s} + (1+\epsilon(\beta'))\frac{\alpha r}{\beta'y} \sum_{s=u'}^u h^\ell_{ij_s}\hat{y}^\ell_{ij_s}.
\end{align*}
Note that $\sum_{s=1}^{u'-1}h^\ell_{ij_s}\hat{y}^\ell_{ij_s} = \beta'y - \sum_{s=u'}^u h^\ell_{ij_s}\hat{y}^\ell_{ij_s} > \beta'y/2$ by the assumption, and we know $(1-\epsilon(\beta')) < (1+\epsilon(\beta'))$. Because the two sums total $\beta'y$ and the larger coefficient $(1+\epsilon(\beta'))$ multiplies the smaller sum $\sum_{s=u'}^u h^\ell_{ij_s}\hat{y}^\ell_{ij_s}$, replacing each sum by $\beta'y/2$ can only increase the right-hand side, so we have
\[
\alpha r < (1-\epsilon(\beta'))\frac{\alpha r}{\beta'y}(\beta'y/2) + (1+\epsilon(\beta'))\frac{\alpha r}{\beta'y}(\beta'y/2) = \frac{\alpha r}{2}(1-\epsilon(\beta') + 1+\epsilon(\beta')) = \alpha r.
\]
This is a contradiction, so the claim holds.

\subsection{Proof of Lemma \ref{main ineq}}\label{app:conc}
The proof consists of two steps. First, we establish an inequality for random variables, then we take expectations. The quadratic function $f(x) = (x-\mathbb{E}[Z])^2 - 4(x-c)(c-\mathbb{E}[Z])$ has its minimum at $x = 2c - \mathbb{E}[Z]$. At this point, $f(x)=0$, which implies $f(x) \ge 0$ for all $x$.  For any realization of $Z$, the term $(Z-c)^+$ is non-zero only if $Z>c$. For this case, rearranging $f(Z)\ge 0$ gives the pointwise inequality:
\[
(Z - c)^+ \le \frac{(Z-\mathbb{E}[Z])^2}{4(c - \mathbb{E}[Z])}.
\]
Here we use the assumption $c > \mathbb{E}[Z]$, which keeps the denominator $4(c-\mathbb{E}[Z])$ positive so that dividing by it preserves the inequality. Now we take expectations of both sides. Let $Z = \sum_{k=1}^n Z_k$, where $Z_k$ are independent $[0,1]$-valued random variables.
\begin{align*}
\mathbb{E}[(Z-c)^+] &\le \frac{\mathbb{E}[(Z-\mathbb{E}[Z])^2]}{4(c - \mathbb{E}[Z])} = \frac{\mathrm{Var}(Z)}{4(c-\mathbb{E}[Z])} = \frac{\sum_{k=1}^n \mathrm{Var}(Z_k)}{4(c-\mathbb{E}[Z])} \\
&\le \frac{\sum_{k=1}^n \mathbb{E}[Z_k^2]}{4(c-\mathbb{E}[Z])} \le \frac{\sum_{k=1}^n \mathbb{E}[Z_k]}{4(c-\mathbb{E}[Z])} = \frac{\mathbb{E}[Z]}{4(c-\mathbb{E}[Z])}.
\end{align*}
The last inequality holds because $Z_k \in [0,1]$ implies $Z_k^2 \le Z_k$. Dividing by $\mathbb{E}[Z]$ yields the result.

\section{Performance of Policy 1 in the Non-Stationary Case}\label{app:simgen}

To further motivate the need for the advanced policy, we analyze the performance of Policy 1 when applied to the non-stationary case with a uniform scaling parameter $\alpha$. The following lemma shows that this simpler approach has a significantly weaker performance guarantee.

\begin{lemma}[Performance of Policy 1]\label{lem:simple_policy_bound}
Let $Alg_1(\boldsymbol{c}, \alpha)$ be the expected profit of Policy 1 using a fluid solution scaled by $\alpha \in (0,1]$. The performance is bounded by $\mathbb{E}[Alg_1(\boldsymbol{c}, \alpha)] \ge \alpha(1-\alpha)J_1^*(\boldsymbol{c})$. This bound is tight.
\end{lemma}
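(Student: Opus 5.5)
Two things are needed: the lower bound $\alpha(1-\alpha)J_1^*(\boldsymbol{c})$, and an instance showing it is tight.

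\emph{Lower bound.} Run Policy 1 with $(\hat{\boldsymbol{w}},\hat{\boldsymbol{y}})=\alpha(\boldsymbol{w}^*,\boldsymbol{y}^*)$, with the usual padding of $\hat w_j(\varnothing)$. By Lemma~\ref{lem:expected_demand_match}, $\mathbb{E}[Z^\ell_{ij}]=\alpha y^{*\ell}_{ij}$, so $\mathbb{E}[Z^\ell_i]\le \alpha c^\ell_i$ for every pair $(i,\ell)$. I would follow Case~1 of Part~A. Write $A^\ell_{it}$ for the indicator that stock of $i$ remains at $\ell$ at the start of period $t$. Then
\[
\mathbb{E}[\bar Z^\ell_{ij}]=\sum_t \lambda_{jt}\sum_S \frac{\hat w_j(S)}{\tau_j}\phi_{ij}(S)\rho_{ij}(\ell)\,\mathbb{E}[A^\ell_{it}].
\]
This uses that the period-$t$ arrival, assortment and warehouse draw are independent of $A^\ell_{it}$, which is determined by earlier periods. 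Next, $\mathbb{E}[A^\ell_{it}]\ge \mathbb{P}(Z^\ell_i<c^\ell_i)\ge 1-\mathbb{E}[Z^\ell_i]/c^\ell_i\ge 1-\alpha$ by Markov's inequality. Hence $\mathbb{E}[\bar Z^\ell_{ij}]\ge(1-\alpha)\alpha y^{*\ell}_{ij}$. Multiply by $r^\ell_{ij}$, which is nonnegative wherever $y^{*\ell}_{ij}>0$ by the WLOG assumption, and sum over $(i,j,\ell)$. This gives $\alpha(1-\alpha)\text{LP}(\boldsymbol{c})$, and Lemma~\ref{lem:lpbound} gives $\text{LP}(\boldsymbol{c})\ge J_1^*(\boldsymbol{c})$.

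\emph{Tightness.} The Markov step is where the loss comes from: early, low-profit demand can drain the stock before high-profit demand arrives. A bad instance should therefore make exhaustion by cheap early demand almost certain on the event $Z\ge c$, while the early demand itself contributes almost no profit. I would use one product, one warehouse, stock $c=1$, and the trivial choice model where the product is bought whenever offered. There are two customer types. Type~1 has profit $\varepsilon$ and arrives in the first $T-1$ periods with total mass $\tau_1=1$. Type~2 has profit $1$ and arrives only in the last period, with probability $p$.

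For small $\varepsilon$ the fluid solution sends the unit to type~2 and spreads the remainder $1-p$ over type~1, so $\text{LP}\approx p$. The optimal policy reserves the unit for type~2 and also gets about $p$, so $J_1^*\approx p$ and the LP bound is essentially tight here.

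Under the scaled policy, the type-1 demand up to the end is approximately Poisson with mean $\alpha(1-p)$ as $T\to\infty$. The unit therefore survives to the last period with probability about $e^{-\alpha(1-p)}$, and type~2 then requests it with probability $\alpha p$. So the expected profit is about $\alpha p\, e^{-\alpha(1-p)}+O(\varepsilon)$, and the ratio is about $\alpha e^{-\alpha(1-p)}$.

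\emph{Main obstacle.} This single-unit example gives $\alpha e^{-\alpha}$ as $p\to 0$, which exceeds $\alpha(1-\alpha)$. The gap comes from replacing the Poisson survival probability with Markov's bound. To close it, I would scale the instance: stock $c$ large, $c$ units of type-2 demand at the end, and low-profit early demand of total mass $c(1-\delta)$. Concentration then makes the early demand consume about $\alpha(1-\delta)c$ units nearly deterministically, and the remaining stock is split between the two kinds of demand. I expect this construction to need tuning, possibly with a three-level profit structure, to drive the ratio all the way down to $\alpha(1-\alpha)$. If exact tightness cannot be reached, I would at least state that the worst case over $\alpha$ approaches $1/4$, attained at $\alpha=1/2$, as the section quotes.
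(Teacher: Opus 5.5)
Your lower-bound argument is correct and is the paper's argument. Markov's inequality with $\mathbb{E}[Z^\ell_i]\le\alpha c^\ell_i$ gives $\mathbb{E}[A^\ell_{it}]\ge\mathbb{P}(Z^\ell_i<c^\ell_i)\ge 1-\alpha$, hence $\mathbb{E}[\bar Z^\ell_{ij}]\ge(1-\alpha)\alpha y^{*\ell}_{ij}$. Summing and applying Lemma~\ref{lem:lpbound} finishes it.

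The gap is the tightness claim, which your proposal does not establish. Your instance gives ratio about $\alpha e^{-\alpha(1-p)}>\alpha(1-\alpha)$, as you note. Your proposed repair goes in the wrong direction. With $c$ large and many independent per-period trials, the cheap demand concentrates, so the high-profit demand is rarely starved; this is exactly what Lemmas~\ref{1e_sqrt} and~\ref{main ineq} exploit. The ratio therefore climbs toward $\alpha$ rather than falling to $\alpha(1-\alpha)$. Also, if the late demand alone already fills the stock, the fluid solution allocates nothing to the early type, so Policy~1 never offers it and there is no cannibalization at all.

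The missing observation is that Markov's bound $\mathbb{P}(Z\ge c)\le\mathbb{E}[Z]/c$ is attained only by an all-or-nothing $Z\in\{0,c\}$. For a sum of independent per-period Bernoulli demands, that essentially forces a single unit against a single random trial. Spreading type~1 over $T-1\to\infty$ periods is precisely what turns the survival probability $1-\alpha(1-p)$ into $e^{-\alpha(1-p)}$.

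The fix is to take $T=2$ in your own instance. Set $c=1$, $\lambda_{11}=1-p$, $\lambda_{22}=p$, $r^1_{11}=\varepsilon$, $r^1_{12}=1$, with the product always bought when offered. The fluid optimum is $y^1_{11}=1-p$, $y^1_{12}=p$, so the $\alpha$-scaled Policy~1 offers the product to each arriving customer with probability $\alpha$. The unit then survives period~$1$ with probability exactly $1-\alpha(1-p)$, which tends to the Markov bound $1-\alpha$ as $p\to 0$. The expected profit is $\alpha(1-p)\varepsilon+\alpha p\bigl(1-\alpha(1-p)\bigr)$. Meanwhile $J_1^*(\boldsymbol{c})=p$, obtained by reserving the unit, which is optimal once $\varepsilon\le p$. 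The ratio is $\alpha\bigl(1-\alpha(1-p)\bigr)+\alpha(1-p)\varepsilon/p\to\alpha(1-\alpha)$ as $p\to0$ with $\varepsilon/p\to0$.

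This is the paper's instance after rescaling profits ($p=1/k$, $\varepsilon=1/k^2$). Falling back on the statement that the worst case over $\alpha$ is $1/4$ would not prove the lemma's tightness claim.
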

\begin{proof}
The proof consists of two parts: establishing the lower bound and then showing its tightness with a worst-case example.

\paragraph{Proof of the Lower Bound.}
The expected $(i,\ell)$-sales to customer type $j$ are given by
\[
\mathbb{E}[\bar{Z}_{ij}^\ell] = \sum_{t \in \mathcal{T}} \lambda_{jt}\sum_{S \subseteq \mathcal{N}}\frac{\hat{w}_j(S)}{\tau_j}\phi_{ij}(S)\frac{\hat{y}^{\ell}_{ij}}{\sum_{h \in \mathcal{L}_+} \hat{y}^h_{ij}} \mathbb{E}[A^{\ell}_{it}],
\]
where $A_{it}^\ell$ is the indicator that stock of product $i$ is available at the beginning of time period $t$ on warehouse $\ell$. The factor $\sum_{t \in \mathcal{T}} \lambda_{jt}\sum_{S \subseteq \mathcal{N}}\frac{\hat{w}_j(S)}{\tau_j}\phi_{ij}(S)\frac{\hat{y}^{\ell}_{ij}}{\sum_{h \in \mathcal{L}_+} \hat{y}^h_{ij}}$ appearing here equals $\mathbb{E}[Z_{ij}^\ell]$ by Lemma~\ref{lem:expected_demand_match}, so a lower bound on $\mathbb{E}[A_{it}^\ell]$ directly lower-bounds $\mathbb{E}[\bar{Z}_{ij}^\ell]$. The probability of having stock at the beginning of time period $t$ is at least the probability of not stocking out over the entire horizon: $\mathbb{E}[A_{it}^\ell] \ge \mathbb{P}(Z_i^\ell < c_i^\ell)$. By Markov's inequality:
\[
\mathbb{P}(Z_i^\ell \ge c_i^\ell) \le \frac{\mathbb{E}[Z_i^\ell]}{c_i^\ell} = \frac{\sum_{j \in \mathcal{M}}\hat{y}^\ell_{ij}}{c_i^\ell} \le \frac{\alpha \sum_{j \in \mathcal{M}}y^{*\ell}_{ij}}{c_i^\ell} \le \frac{\alpha c_i^\ell}{c_i^\ell} = \alpha.
\]
Thus, $\mathbb{P}(Z_i^\ell < c_i^\ell) \ge 1-\alpha$. This provides a lower bound on expected sales:
\[
\mathbb{E}[\bar{Z}^\ell_{ij}] \ge \mathbb{E}[Z^\ell_{ij}] \cdot (1-\alpha) = (1-\alpha)\hat{y}^\ell_{ij}.
\]
The total expected profit is therefore bounded by:
\[
 \sum_{i \in \mathcal{N}}\sum_{j\in \mathcal{M}}\sum_{\ell \in \mathcal{L}}  r^\ell_{ij}\mathbb{E}[\bar{Z}^\ell_{ij}] \ge (1-\alpha) \sum_{i \in \mathcal{N}}\sum_{j\in \mathcal{M}}\sum_{\ell \in \mathcal{L}} r^\ell_{ij}\hat{y}^\ell_{ij} = (1-\alpha)\alpha \cdot \text{LP}(\boldsymbol{c}) \ge \alpha(1-\alpha)J_1^*(\boldsymbol{c}).
\]
The quadratic $\alpha(1-\alpha)$ is maximized at $\alpha=1/2$, yielding a best-possible guarantee of $0.25$ for this simple policy.
\end{proof}

\paragraph{Proof of Tightness.} We show the $\alpha(1-\alpha)$ bound is tight by constructing a worst-case instance. Consider an instance with a single product ($n=1$), two customer types ($m=2$), a single warehouse ($L=1$), and a time horizon of two periods ($T=2$), with an initial capacity of $c_1^1=1$. The arrivals are non-stationary:
\begin{itemize}
    \item At $t=1$: A type 1 customer arrives with probability $\lambda_{11}=1-1/k$.
    \item At $t=2$: A type 2 customer arrives with probability $\lambda_{22}=1/k$.
\end{itemize}
All other arrival probabilities are zero. The profits are highly skewed, with $r_{11}^1=1$ and $r_{12}^1=k^2$ for some large $k$. The choice probabilities are $\phi_{11}(\{1\}) = \phi_{12}(\{1\}) = 1$.

\vspace{0.5\baselineskip}
\noindent\textit{Fluid and True Optimal Solutions.}
The optimal fluid solution does not account for the sequential nature of inventory depletion. Its optimal solution is to set $w_1(\{1\}) = 1 - 1/k$, $w_2(\{1\}) = 1/k$, $y^1_{11} = 1 - 1/k$, and $y^1_{12} = 1/k$, with zero sales from the virtual warehouse ($y_{11}^2=y_{12}^2=0$). The corresponding objective value is $\text{LP}(\boldsymbol{c}) = 1\cdot(1-1/k) + k^2\cdot(1/k) = k+1-1/k$.
The true optimal policy, however, must decide at $t=1$ whether to sell to the low profit customer or wait. The optimal strategy is to always wait for the type-$2$ customer, which yields an expected profit of $J_1^*(\boldsymbol{c}) = \mathbb{P}(\text{arrival at } t=2) \cdot r^1_{12} = (1/k) \cdot k^2 = k$.

\vspace{0.5\baselineskip}
\noindent\textit{Policy 1 Performance.}
We now analyze the performance of Policy 1 with parameter $\alpha$. Its expected total profit is the sum of the expected profits from each time period:
\begin{itemize}
    \item \textit{Expected profit at $t=1$:} A type 1 customer arrives with probability $1-1/k$. The policy offers the product with probability $\alpha$, yielding a profit of 1. The expected profit from this time period is $(1-1/k) \cdot \alpha \cdot 1 = \alpha(1-1/k)$.
    
    \item \textit{Expected profit at $t=2$:} A type 2 customer arrives with probability $1/k$. A sale is possible only if no sale occurred at $t=1$ (which happens with probability $1-\alpha(1-1/k)$). The policy offers the product with probability $\alpha$, yielding a profit of $r^1_{12}=k^2$. The expected profit from this time period is $\frac{1}{k} \cdot (1-\alpha(1-1/k)) \cdot \alpha \cdot k^2$.
\end{itemize}
Summing these two terms, the expected total profit for the policy is:
\[
\mathbb{E}[Alg_1(\boldsymbol{c}, \alpha)] = \alpha(1-1/k) + \alpha k(1-\alpha(1-1/k)) = \alpha(1-\alpha)k + \alpha^2 + \alpha - \frac{\alpha}{k}.
\]
The ratio of the policy's profit to the optimum, $J_1^*(\boldsymbol{c})=k$, is therefore:
\[
\frac{\mathbb{E}[Alg_1(\boldsymbol{c}, \alpha)]}{J_1^*(\boldsymbol{c})} = \frac{\alpha(1-\alpha)k + \alpha^2 + \alpha - \frac{\alpha}{k}}{k} = \alpha(1-\alpha) + \frac{\alpha^2 + \alpha}{k} - \frac{\alpha}{k^2}.
\]
As $k \to \infty$, this ratio approaches $\alpha(1-\alpha)$, proving the bound is tight. This result highlights the limitations of Policy 1 and motivates the need for the more sophisticated machinery of Policy 2 to achieve a better guarantee in the non-stationary case. 

\section{Tightness of the Fluid Bound \texorpdfstring{\eqref{eq:fluid}}{(LP)}}
In this section, we analyze tightness of the fluid approximation both under stationarity assumption and without it.
\subsection{Tightness of the Fluid Bound under Stationarity.}
We show that the $1-\frac{1}{e}$ component of the approximation guarantee is tight with respect to the fluid benchmark $\text{LP}(\boldsymbol{c})$. To do this, we construct an instance where the ratio of the optimal expected profit, $J_1^*(\boldsymbol{c})$, to the value of the fluid solution approaches $1-\frac{1}{e}$. This implies two important consequences: (1) no policy can achieve a constant factor better than $1-\frac{1}{e}$ when its performance is measured against the specific upper bound provided by $\text{LP}(\boldsymbol{c})$, and, consequently, (2) any attempt to prove a better guarantee would require a different, stronger benchmark.
\vspace{0.2cm}

\noindent\textit{Instance Construction.} Consider an instance with a single product, customer type, and warehouse ($n=m=L=1$), with capacity $c_1^1=1$ and profit $r_{11}^1=1$. Arrivals are stationary with $\lambda_{1t} = 1/T$ for all $t \in \mathcal{T}$, so the total expected arrivals are $\tau_1=1$. The customer always chooses the product if it is offered, i.e., $\phi_{11}(\{1\})=1$.

The optimal solution to the fluid LP \eqref{eq:fluid} for this instance is to set $w_1(\{1\}) = 1$, $y_{11}^1=1$, and $y_{11}^2=0$ (where warehouse 2 is the virtual one), which yields an objective value of $\text{LP}(\boldsymbol{c}) = 1$.

However, the true optimal dynamic policy is to offer the product in every period until it is sold. A sale occurs if at least one customer arrives during the $T$ periods. The probability of this event is $1 - (1-1/T)^T$. The optimal expected profit is therefore:
\[
J_1^*(\boldsymbol{c}) = 1 \cdot \left(1 - \left(1-\frac{1}{T}\right)^T\right).
\]
As the time horizon $T \to \infty$, the optimal expected profit $J_1^*(\boldsymbol{c})$ approaches $1-\frac{1}{e}$. Thus, the ratio $J_1^*(\boldsymbol{c})/\text{LP}(\boldsymbol{c})$ can be arbitrarily close to $1-\frac{1}{e}$, demonstrating the tightness of the analysis with respect to the fluid bound.

\subsection{Tightness of the Fluid Bound under Non-Stationarity}
We show that for the non-stationary case, there is a larger gap between the fluid approximation and the optimal expected profit. We construct an instance where the ratio $J^*_1(\boldsymbol{c})/\text{LP}(\boldsymbol{c})$ can be arbitrarily close to $1/2$.

\noindent\textit{Instance.}
Consider an instance with one product ($n=1$), one physical warehouse ($L=1$), a capacity of $c_1^1=1$, and two customer types ($m=2$) arriving over two periods ($T=2$). The arrivals are non-stationary:

At $t=1$: A type 1 customer arrives with probability $\lambda_{11}=1-1/k$. No type 2 customer arrives ($\lambda_{21}=0$).

At $t=2$: A type 2 customer arrives with probability $\lambda_{22}=1/k$. No type 1 customer arrives ($\lambda_{12}=0$).

The profits are skewed: selling to a type 1 customer yields $r_{11}^1=1$, while selling to a type 2 customer yields $r_{12}^1=k$ for a large $k$. Customers always purchase if the product is offered: $\phi_{11}(\{1\}) = \phi_{12}(\{1\}) = 1$.
\vspace{0.2cm}

\noindent\textit{Fluid Solution Value.}
The fluid LP \eqref{eq:fluid} does not account for the sequential nature of the problem and the fact that a sale at $t=1$ depletes inventory for $t=2$. Its optimal solution serves both expected customer arrivals. Thus, the optimal fluid expected sales are $y_{11}^1 = 1-1/k$ and $y_{12}^1 = 1/k$. The complete solution specifies $w_1(\{1\})=1-1/k$, $w_2(\{1\})=1/k$, and zero expected sales from the virtual warehouse ($y_{11}^2=y_{12}^2=0$). The optimal value of the fluid solution is therefore:
\[
\text{LP}(\boldsymbol{c}) = 1 \cdot y_{11}^1 + k \cdot y_{12}^1 = 1 \cdot (1-1/k) + k \cdot (1/k) = 2 - 1/k.
\]

\noindent\textit{True Optimal Expected Profit.}
The true optimal policy faces a crucial decision at $t=1$: sell to the arriving type 1 customer or save the single unit of inventory for the potentially more profitable type 2 customer. We compare the expected profits of these two actions:

\textit{Strategy 1: Sell at $t=1$.} If the type 1 customer arrives (probability $1-1/k$), the retailer sells and gets a profit of 1. The inventory is then depleted. The total expected profit from this strategy is $1 \cdot (1-1/k) + k \cdot (1/k)^2 = 1$.

\textit{Strategy 2: Wait for $t=2$.} The retailer forgoes the sale to the type 1 customer. At $t=2$, a type 2 customer arrives with probability $1/k$, yielding a profit of $k$. The total expected profit from this strategy is $k \cdot (1/k) = 1$.

Thus, the two strategies tie, each yielding expected profit $1$, and the optimal expected profit is $J_1^*(\boldsymbol{c})=1$.
\vspace{0.2cm}

\noindent The ratio of the optimal expected profit to the fluid bound is equal to $ \frac{1}{2-1/k}$. As $k \to \infty$, this ratio approaches $1/2$.

\section{Technical Results For Section \ref{sec:joint}}

\vspace{0.5\baselineskip}

\subsection{Verifying Feasibility of the Stocking Plan in Theorem \ref{joint stationarity}}\label{app:jfs}
We show that $\hat{\boldsymbol{c}}$ satisfies the supply and capacity constraints. For any product $i \in \mathcal{N}$:
\begin{align*}
\sum_{\ell \in \mathcal{L}} \hat{c}^\ell_i &\le \sum_{\ell \in \mathcal{L}} (\alpha c^{*\ell}_i + \gamma) = \alpha \sum_{\ell \in \mathcal{L}} c^{*\ell}_i + L\gamma \\
&\le \alpha C_i + L \min\left(\sqrt{\frac{K^{\min}}{n}},\sqrt{\frac{C_{\min}}{L}}\right) \le \left(1-\sqrt{\frac{L}{C_{\min}}}\right)C_i + L\sqrt{\frac{C_{\min}}{L}} \\
&\le C_i - \sqrt{L C_{\min}} + \sqrt{L C_{\min}} = C_i.
\end{align*}
Similarly, for any warehouse $\ell \in \mathcal{L}$:
\begin{align*}
\sum_{i \in \mathcal{N}} \hat{c}^\ell_i &\le \sum_{i \in \mathcal{N}} (\alpha c^{*\ell}_i + \gamma) = \alpha \sum_{i \in \mathcal{N}} c^{*\ell}_i + n\gamma \\
&\le \alpha K^\ell + n \min\left(\sqrt{\frac{K^{\min}}{n}},\sqrt{\frac{C_{\min}}{L}}\right) \le \left(1-\sqrt{\frac{n}{K^{\min}}}\right)K^\ell + n\sqrt{\frac{K^{\min}}{n}} \\
&\le K^\ell - \sqrt{n K^{\min}} + \sqrt{n K^{\min}} \le K^\ell.
\end{align*}
Thus, the stocking plan $\hat{\boldsymbol{c}}$ is feasible.

\subsection{Verifying Feasibility of the Stocking Plan in Theorem \ref{joint general}} \label{app:jfg}
We first show that the aggregate stocking plan $\hat{\boldsymbol{c}}$ satisfies the physical capacity and supply constraints. For any product $i \in \mathcal{N}$:
\begin{align*}
\sum_{\ell \in \mathcal{L}} \hat{c}^\ell_i &\le \sum_{\ell \in \mathcal{L}} (\alpha c^{*\ell}_i + m\gamma) = \alpha \sum_{\ell \in \mathcal{L}} c^{*\ell}_i + Lm\gamma \\
&\le \alpha C_i + Lm \min\left(\sqrt{\frac{K^{\min}}{mn}},\sqrt{\frac{C_{\min}}{mL}}\right) \le \left(1-\sqrt{\frac{Lm}{C_{\min}}}\right)C_i + Lm\sqrt{\frac{C_{\min}}{Lm}} \le C_i.
\end{align*}
Similarly, for any warehouse $\ell \in \mathcal{L}$, the capacity constraint is met:
\begin{align*}
\sum_{i \in \mathcal{N}} \hat{c}^\ell_i &\le \sum_{i \in \mathcal{N}} (\alpha c^{*\ell}_i + m\gamma) = \alpha \sum_{i \in \mathcal{N}} c^{*\ell}_i + nm\gamma \\
&\le \alpha K^\ell + nm \min\left(\sqrt{\frac{K^{\min}}{mn}},\sqrt{\frac{C_{\min}}{mL}}\right) \le \left(1-\sqrt{\frac{nm}{K^{\min}}}\right)K^\ell + nm\sqrt{\frac{K^{\min}}{nm}} \le K^\ell.
\end{align*}
Next, we show that the customer-specific capacities $\hat{c}^\ell_{ij}$ are consistent with the aggregate capacities $\hat{c}^\ell_i$. Using the property that $\sum \lfloor x_k \rfloor \le \left\lfloor \sum x_k \right\rfloor$:
\[
\sum_{j \in \mathcal{M}} \hat{c}^\ell_{ij} = \sum_{j \in \mathcal{M}} \lfloor \hat{y}^\ell_{ij} + \gamma \rfloor \le \left\lfloor \sum_{j \in \mathcal{M}} (\hat{y}^\ell_{ij} + \gamma) \right\rfloor \mathrel{\le} \lfloor \alpha c^{*\ell}_i + m\gamma \rfloor = \hat{c}^\ell_i.
\]
Thus, the stocking plan is feasible at both the aggregate and customer-specific levels.

\section{Proof of Lemma \ref{lem:mnl_lp_equiv}}\label{app:eqlp}
The proof proceeds by a chain of duality arguments. We begin with the dual of \eqref{eq:fluid}, which is:
\begin{equation}\label{eq:proof_dual_1}
\min_{{\boldsymbol{\gamma\ge0},\boldsymbol{\sigma,\beta}}} \left\{ \begin{array}{l@{\quad}l}
     \displaystyle \sum_{i \in \mathcal{N}}\sum_{\ell \in \mathcal{L}} c^\ell_i \gamma_i^\ell + \sum_{j \in \mathcal{M}} \tau_j \sigma_j : & \gamma^\ell_i - \beta_{ij} \ge r^\ell_{ij}, \quad \forall i \in \mathcal{N}, j \in \mathcal{M}, \ell \in \mathcal{L}; \\
     & \sigma_j \ge \displaystyle\max_{S\subseteq\mathcal{N}}\left(-\sum_{i \in \mathcal{N}}\phi_{ij}(S)\beta_{ij}\right), \quad \forall j \in \mathcal{M}
\end{array} \right\}
\end{equation}
For each $j \in \mathcal{M}$, the value of the inner maximization over $S$ is equal to the value of the standard LP relaxation for the single-customer MNL assortment optimization problem, similar to \citet{topaloglu2013joint}:
\begin{equation}\label{eq:proof_inner_primal}
\max_{\boldsymbol{z\ge0}} \left\{ -\sum_{i \in \mathcal{N}} \beta_{ij}z_{ij} : \quad \sum_{i \in \mathcal{N}} z_{ij} + z_{0j} \le 1, \quad z_{ij} \le v_{ij}z_{0j}, \quad \forall i \in \mathcal{N} \right\}
\end{equation}
By strong duality, we can replace the value of each such LP with the value of its dual:
\begin{equation}\label{eq:proof_inner_dual}
\min_{\boldsymbol{w\ge0} } \left\{ w_{0j} : \quad w_{0j} - \sum_{i \in \mathcal{N}} v_{ij}w_{ij} \ge 0, \quad w_{0j} + w_{ij} \ge -\beta_{ij}, \quad \forall i \in \mathcal{N} \right\}
\end{equation}
Substituting this back into the main dual formulation \eqref{eq:proof_dual_1} gives the equivalent dual problem:
\begin{equation}\label{eq:proof_master_dual}
\min_{\boldsymbol{\gamma \ge 0,w\ge0}, \boldsymbol{\sigma,\beta}}  \left\{ \begin{array}{l@{\quad}l}
    \displaystyle \sum_{i \in \mathcal{N}}\sum_{\ell \in \mathcal{L}} c^\ell_i\gamma_i^\ell + \sum_{j \in \mathcal{M}} \tau_j \sigma_j : & \gamma^\ell_i - \beta_{ij} \ge r^\ell_{ij}, \quad \forall i \in \mathcal{N}, j \in \mathcal{M}, \ell \in \mathcal{L}; \\
    & \sigma_j \ge w_{0j}, \quad \forall j \in \mathcal{M}; \\
    & \displaystyle w_{0j} - \sum_{i \in \mathcal{N}} v_{ij}w_{ij} \ge 0, \quad \forall j \in \mathcal{M}; \\
    & w_{0j} + w_{ij} \ge -\beta_{ij}, \quad \forall i \in \mathcal{N}, j \in \mathcal{M}
\end{array} \right\}
\end{equation}
Finally, taking the dual of \eqref{eq:proof_master_dual} yields an LP that directly simplifies to \eqref{eq:mnl_lp_compact}.

\section{Proof of Lemma~\ref{lem:fapp_approx}}\label{app:fapp_approx}

For the lower bound, let $(\boldsymbol{y^*, y_{0}^*})$ be an optimal solution to the LP \eqref{eq:mnl_lp_compact} defining $\text{LP}(\boldsymbol{c})$ (as our choice model is MNL). Consider the scaled solution $\bar{\boldsymbol{y}} = \frac{1}{2}\boldsymbol{y}^*$. This solution is feasible for the LP \eqref{eq:f_app}; the first constraint is met by scaling, and the second and third constraints are met because $\sum_{i \in \mathcal{N},\ell \in \mathcal{L}_+} \bar{y}^\ell_{ij} = \frac{1}{2}\sum_{i \in \mathcal{N},\ell \in \mathcal{L}_+} y^{*\ell}_{ij} \le \frac{1}{2} \tau_j$ and $\sum_{\ell \in \mathcal{L}_+}  \bar{y}^\ell_{ij} = \frac{1}{2}\sum_{\ell \in \mathcal{L}_+} y^{*\ell}_{ij} \le \frac{1}{2}v_{ij}y^*_{0j} \le \frac{1}{2}v_{ij}\tau_j$ (last inequalities in both chains follow directly from constraints in  \eqref{eq:mnl_lp_compact}). The objective value of this feasible solution is $\frac{1}{2}\text{LP}(\boldsymbol{c})$, which proves the lower bound.

For the upper bound, let $\bar{\boldsymbol{y}}$ be an optimal solution to the LP \eqref{eq:f_app} defining $f_{app}(\boldsymbol{c})$. We can construct a feasible solution for the LP \eqref{eq:mnl_lp_compact} by setting $\boldsymbol{y} = \bar{\boldsymbol{y}}$ and $y_{0j} = \tau_j/2$ for all $j \in \mathcal{M}$. This solution has an objective value of $f_{app}(\boldsymbol{c})$. Since $\text{LP}(\boldsymbol{c})$ is the optimal value of \eqref{eq:mnl_lp_compact}, we must have $f_{app}(\boldsymbol{c}) \le \text{LP}(\boldsymbol{c})$.

\section{Proof of Lemma \ref{lem:fapp_dr}}\label{app:dr}

We begin our proof by analyzing the dual of the LP \eqref{eq:f_app} that defines $f_{app}(\boldsymbol{c})$.

\paragraph{Dual Formulation.}
Let $\mu_i^\ell \ge 0$, $\sigma_j \ge 0$, and $\theta_{ij} \ge 0$ be the dual variables for the three sets of constraints in \eqref{eq:f_app}, respectively. The dual LP is:
\begin{equation*}
\min_{\bm{\mu \ge 0, \sigma\ge 0, \theta \ge 0}}  \left\{ \begin{aligned}
&\sum_{i \in \mathcal{N}}\sum_{\ell \in \mathcal{L}} c_i^{\ell} \mu_i^{\ell} + \sum_{j \in \mathcal{M}} \frac{\tau_j}{2} \sigma_j + \sum_{i \in \mathcal{N}}\sum_{j \in \mathcal{M}} \frac{v_{ij}\tau_j}{2} \theta_{ij} : \\
&\quad \mu_i^{\ell} + \sigma_j + \theta_{ij} \ge r^{\ell}_{ij}, \;\; \forall i \in \mathcal{N},\, j \in \mathcal{M},\, \ell \in \mathcal{L}
\end{aligned} \right\}.
\end{equation*}

For fixed $\bm{\mu}$, this problem can be decomposed by customer types. For notational convenience, let us first define the inner minimization problem for each customer type $j \in \mathcal{M}$ as a function of $\bm{\mu}$:
\begin{equation}\label{eq:G_j_def}
G_j(\bm{\mu}) \triangleq \min_{\sigma_j \ge 0, \bm{\theta}_j \boldsymbol{\ge 0}} \left\{ \sigma_j + \sum_{i \in \mathcal{N}} v_{ij} \theta_{ij} : \quad \sigma_j + \theta_{ij} \ge \left(r^{\ell}_{ij} - \mu_i^{\ell}\right)^+ , \; \forall i \in \mathcal{N},  \ell \in \mathcal{L}\right\}.
\end{equation}
Using this definition, we can express $f_{app}(\boldsymbol{c})$ more compactly. Let $L(\boldsymbol{c}, \bm{\mu}) = \sum_{i \in \mathcal{N}}\sum_{\ell \in \mathcal{L}} c_i^{\ell} \mu_i^{\ell}$ and define the auxiliary function $F(\boldsymbol{c}, \bm{\mu})$ as:
\begin{equation}\label{eq:F_def}
F(\boldsymbol{c}, \bm{\mu}) \triangleq L(\boldsymbol{c}, \bm{\mu}) + \sum_{j \in \mathcal{M}} \frac{\tau_j}{2} G_j(\bm{\mu}).
\end{equation}
Then, the surrogate function is the minimization of $F$ over $\bm{\mu}$: $f_{app}(\boldsymbol{c}) = \min_{\bm{\mu}\ge \bf{0}} F(\boldsymbol{c}, \bm{\mu})$.

\subsection{Main Proof Argument}
The proof relies on showing the following property of $F$:

\begin{lemma}\label{lem:F_weak_submodular}
For any real vectors $\boldsymbol{c} \ge \boldsymbol{b} \ge \bf{0}$ and $\bm{\mu}, \bm{\eta} \ge \bf{0}$, we have  $F(\boldsymbol{c},\bm{\mu}) + F(\boldsymbol{b},\bm{\eta}) \ge F(\boldsymbol{c},\bm{\mu} \wedge \bm{\eta}) + F(\boldsymbol{b},\bm{\mu} \vee \bm{\eta})$.
\end{lemma}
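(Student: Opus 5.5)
The plan is to split $F(\boldsymbol{c},\bm{\mu}) = L(\boldsymbol{c},\bm{\mu}) + \sum_{j\in\mathcal M}\frac{\tau_j}{2}G_j(\bm{\mu})$ into its linear part and its $G_j$ part, and to prove the inequality separately for each.

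\textbf{Linear part.} Use the coordinatewise identity $\bm{\mu}-\bm{\mu}\wedge\bm{\eta} = (\bm{\mu}-\bm{\eta})^+ = \bm{\mu}\vee\bm{\eta}-\bm{\eta}$. It gives
\[
L(\boldsymbol{c},\bm{\mu})+L(\boldsymbol{b},\bm{\eta})-L(\boldsymbol{c},\bm{\mu}\wedge\bm{\eta})-L(\boldsymbol{b},\bm{\mu}\vee\bm{\eta}) = \sum_{i,\ell}(c_i^\ell-b_i^\ell)(\mu_i^\ell-\eta_i^\ell)^+ \ge 0,
\]
and this is the only place where $\boldsymbol{c}\ge\boldsymbol{b}$ is used. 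It therefore suffices to show that each $G_j$ is submodular on the lattice $\mathbb{R}_+^{nL}$, i.e. $G_j(\bm{\mu})+G_j(\bm{\eta})\ge G_j(\bm{\mu}\wedge\bm{\eta})+G_j(\bm{\mu}\vee\bm{\eta})$.

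\textbf{Reduction to per-product thresholds.} In \eqref{eq:G_j_def}, $\bm{\mu}$ enters only through the numbers $a_i(\bm{\mu}) = \max_{\ell\in\mathcal L}(r^\ell_{ij}-\mu^\ell_i)^+$, since the constraints are $\sigma_j+\theta_{ij}\ge a_i$. So $G_j(\bm{\mu}) = g(\boldsymbol{a}(\bm{\mu}))$ with
\[
g(\boldsymbol{a}) = \min_{\sigma\ge0}\Bigl\{\sigma+\sum_i v_{ij}(a_i-\sigma)^+\Bigr\}.
\]
Each map $t\mapsto(r-t)^+$ is nonincreasing, and the maximum over $\ell$ commutes with the coordinatewise operations. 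Hence
\[
\boldsymbol{a}(\bm{\mu}\wedge\bm{\eta}) = \boldsymbol{a}(\bm{\mu})\vee\boldsymbol{a}(\bm{\eta}), \qquad \boldsymbol{a}(\bm{\mu}\vee\bm{\eta}) \le \boldsymbol{a}(\bm{\mu})\wedge\boldsymbol{a}(\bm{\eta}).
\]
Since $g$ is clearly nondecreasing in $\boldsymbol{a}$, it then suffices to prove the lattice submodularity $g(\boldsymbol{a})+g(\boldsymbol{a}')\ge g(\boldsymbol{a}\vee\boldsymbol{a}')+g(\boldsymbol{a}\wedge\boldsymbol{a}')$ for nonnegative $\boldsymbol{a},\boldsymbol{a}'$.

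\textbf{Submodularity of $g$ (main obstacle).} A direct exchange argument on the minimizers $\sigma,\sigma'$ fails, because a single $\sigma$ is shared across coordinates whose order between $\boldsymbol{a}$ and $\boldsymbol{a}'$ may differ. I would instead pass to LP duality. The dual gives
\[
g(\boldsymbol{a}) = \max\Bigl\{\sum_i a_i z_i : 0\le z_i\le v_{ij},\ \sum_i z_i\le 1\Bigr\}.
\]
The feasible region is the polymatroid of the set function $\rho(S)=\min\bigl(\sum_{i\in S}v_{ij},1\bigr)$, which is submodular as a concave function of a modular function. By Edmonds' greedy theorem, $g$ restricted to $\boldsymbol{a}\ge\boldsymbol{0}$ is the Lovász extension of $\rho$: $g(\boldsymbol{a})=\int_0^\infty \rho(\{i:a_i>t\})\,dt$. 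Submodularity then follows level by level, since $\{i:(a\vee a')_i>t\}$ is the union and $\{i:(a\wedge a')_i>t\}$ is the intersection of the two level sets $\{i:a_i>t\}$ and $\{i:a'_i>t\}$. Integrating $\rho(A)+\rho(B)\ge\rho(A\cup B)+\rho(A\cap B)$ over $t$ gives the claim. This step is where the care goes: checking that the dual LP is exactly this polymatroid, and that nonnegativity of $\boldsymbol{a}$ lets the layer-cake formula start at $0$.

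Combining the linear part with $\sum_j\frac{\tau_j}{2}G_j$ (nonnegative weights) yields the lemma.
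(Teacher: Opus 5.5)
Your proof is correct. At the top level it matches the paper. The paper also splits $F$ into the linear term (Lemma~\ref{lem:L_submodular_full}, proved with exactly your identity $\bm{\mu}-\bm{\mu}\wedge\bm{\eta}=\bm{\mu}\vee\bm{\eta}-\bm{\eta}$) and the terms $\frac{\tau_j}{2}G_j$, whose lattice submodularity is Lemma~\ref{lem:G_submodular_full}.

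The genuine difference is in how you prove that $G_j$ is submodular. The paper's argument runs as follows:
\begin{itemize}
\item It uses the coordinatewise characterization of continuous submodularity: compare marginal increases along a coordinate on which $\bm{\mu}\ge\bm{\eta}$ agree.
\item It identifies the almost-everywhere derivative of $t\mapsto G_j(\bm{\mu}+t\boldsymbol{e}_i^\ell)$ with minus the largest optimal value of $z_{ij}^\ell$ in the knapsack dual \eqref{eq:knapsack_dual_full}.
\item Through the greedy structure (the sets $B_{\bm{\mu}}$ and $A_{\bm{\mu}}$), it shows this value can only grow when the other coordinates grow, and then integrates.
\end{itemize}
Your argument instead runs as follows:
\begin{itemize}
\item You collapse the warehouses into one threshold $a_i(\bm{\mu})$ per product.
\item You note that $\boldsymbol{a}(\cdot)$ sends meets exactly to joins and sends joins below the meet. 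The maximum over $\ell$ commutes only with $\vee$, which is why you rightly state an inequality for $\bm{\mu}\vee\bm{\eta}$.
\item You absorb that inequality through monotonicity of $g$.
\item You obtain submodularity of $g$ from the fact that, on nonnegative vectors, it is the Lov\'asz extension of $\rho(S)=\min\bigl(\sum_{i\in S}v_{ij},1\bigr)$. The polymatroid of $\rho$ is exactly the dual feasible region.
\end{itemize}

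Your route avoids differentiability and tie-handling among optimal dual solutions, and it needs no external characterization theorem. It also makes the competition among warehouses for the same product (the paper's $B_{\bm{\mu}}$ case) disappear. It explains structurally why $G_j$ is submodular: it is a polymatroid support function composed with the order-reversing threshold map. The paper's argument is more hands-on but stays within the knapsack picture. It produces explicit directional derivatives of $G_j$, that is, the marginal value of raising a single $\mu_i^\ell$.
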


This lemma directly follows from the following properties of $L$ and $G_j$, as $F$ is a non-negative linear combination of these functions. 

\begin{lemma}\label{lem:L_submodular_full}
For any real vectors $\boldsymbol{c} \ge \boldsymbol{b} \ge \bf{0}$ and $\bm{\mu}, \bm{\eta} \ge \bf{0}$, we have $L(\boldsymbol{c},\bm{\mu}) + L(\boldsymbol{b},\bm{\eta}) \ge L(\boldsymbol{c},\bm{\mu} \wedge \bm{\eta}) + L(\boldsymbol{b},\bm{\mu} \vee \bm{\eta})$.
\end{lemma}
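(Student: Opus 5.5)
The plan is to reduce the inequality to a coordinatewise statement, since $L(\boldsymbol{c},\bm{\mu}) = \sum_{i\in\mathcal N}\sum_{\ell\in\mathcal L} c_i^\ell \mu_i^\ell$ is a sum over pairs $(i,\ell)$. The meet and join are taken componentwise: $(\bm{\mu}\wedge\bm{\eta})_i^\ell = \min(\mu_i^\ell,\eta_i^\ell)$ and $(\bm{\mu}\vee\bm{\eta})_i^\ell = \max(\mu_i^\ell,\eta_i^\ell)$. Hence both sides split into sums of per-coordinate terms. It suffices to show, for each fixed $(i,\ell)$ with scalars $c \ge b \ge 0$ and $\mu,\eta \ge 0$, that
\[
c\mu + b\eta \;\ge\; c\min(\mu,\eta) + b\max(\mu,\eta).
\]

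For the scalar inequality, I would split into two cases according to which of $\mu,\eta$ is smaller.

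If $\mu \le \eta$, then $\min(\mu,\eta)=\mu$ and $\max(\mu,\eta)=\eta$. The right side equals $c\mu + b\eta$, so the inequality holds with equality.

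If $\mu > \eta$, then the right side is $c\eta + b\mu$. The difference between the left and right sides is
\[
c\mu + b\eta - c\eta - b\mu = (c-b)(\mu-\eta).
\]
This is nonnegative because $c \ge b$ and $\mu > \eta$.

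Summing the per-coordinate inequalities over all $(i,\ell)$ then gives the claim. Nonnegativity of $\bm{\mu},\bm{\eta}$ is not actually needed here; only the ordering $\boldsymbol{c}\ge\boldsymbol{b}$ is used. This is the standard fact that the bilinear form $L$ has increasing differences, i.e. it is supermodular in $(\boldsymbol{c},\bm{\mu})$ under the order that reverses $\bm{\mu}$.

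I do not expect a real obstacle. The only point needing care is noting that meet and join act coordinatewise, so the separable sum decomposes exactly. The substantive difficulty of Lemma~\ref{lem:F_weak_submodular} lies instead in the companion property for $G_j$, which is not required here.
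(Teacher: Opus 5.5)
Your proof is correct and is essentially the paper's argument: the paper sets $\delta_i^\ell = \mu_i^\ell - (\mu_i^\ell \wedge \eta_i^\ell) = (\mu_i^\ell \vee \eta_i^\ell) - \eta_i^\ell \ge 0$ and uses $c_i^\ell \delta_i^\ell \ge b_i^\ell \delta_i^\ell$, which is your per-coordinate gap $(c-b)(\mu-\eta)^+$ written without the case split. One small slip in your closing aside: under the order that reverses $\bm{\mu}$, this inequality expresses \emph{submodularity} of $L$ (equivalently, increasing differences in the usual order), not supermodularity, though this does not affect the proof.
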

\begin{lemma}\label{lem:G_submodular_full}
The function $G_j(\bm{\mu})$ is weak-DR submodular i.e. for any real vectors $\bm{\mu}, \bm{\eta} \ge 0$,  we have $G_j(\bm{\mu}) + G_j(\bm{\eta}) \ge G_j(\bm{\mu} \wedge \bm{\eta}) + G_j(\bm{\mu} \vee \bm{\eta})$.
\end{lemma}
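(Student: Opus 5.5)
The plan is to reduce $G_j$ to a one-dimensional parametric minimization and prove lattice submodularity directly through a reduced variable. The steps are: eliminate $\boldsymbol{\theta}_j$, pass from $\bm{\mu}$ to a reduced vector $\boldsymbol{a}$, and then prove submodularity in $\boldsymbol{a}$ by exchanging the optimal scalar multipliers.

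\emph{Step 1 (reduction).} The constraints in \eqref{eq:G_j_def} for fixed $i$ and varying $\ell$ collapse to the single constraint $\sigma_j+\theta_{ij}\ge a_i(\bm{\mu})$, where
\[
a_i(\bm{\mu}) := \max_{\ell\in\mathcal{L}} \bigl(r^\ell_{ij}-\mu^\ell_i\bigr)^+ .
\]
For fixed $\sigma_j$, the optimal choice is $\theta_{ij}=(a_i-\sigma_j)^+$, using $v_{ij}\ge 0$. Hence $G_j(\bm{\mu})=H(\boldsymbol{a}(\bm{\mu}))$, where
\[
H(\boldsymbol{a}) := \min_{\sigma\ge 0} \varphi(\sigma,\boldsymbol{a}), \qquad \varphi(\sigma,\boldsymbol{a}) := \sigma+\sum_{i} v_{ij}\,(a_i-\sigma)^+ .
\]
The function $H$ is clearly nondecreasing in $\boldsymbol{a}$. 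Moreover, the minimum is attained on a bounded interval, since $\varphi(\sigma,\boldsymbol{a})\ge\sigma$ and $\varphi(0,\boldsymbol{a})<\infty$.

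\emph{Step 2 (lattice behavior of $\boldsymbol{a}$).} Since $t\mapsto (r-t)^+$ is nonincreasing, we have $(r-\min(\mu,\eta))^+=\max\{(r-\mu)^+,(r-\eta)^+\}$. Taking the maximum over $\ell$ gives the exact identity $\boldsymbol{a}(\bm{\mu}\wedge\bm{\eta})=\boldsymbol{a}(\bm{\mu})\vee\boldsymbol{a}(\bm{\eta})$. On the other side, each term $(r^\ell_{ij}-\max(\mu^\ell_i,\eta^\ell_i))^+$ is at most both $(r^\ell_{ij}-\mu^\ell_i)^+$ and $(r^\ell_{ij}-\eta^\ell_i)^+$, so $\boldsymbol{a}(\bm{\mu}\vee\bm{\eta})\le \boldsymbol{a}(\bm{\mu})\wedge\boldsymbol{a}(\bm{\eta})$. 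Combined with the monotonicity of $H$, it therefore suffices to show that $H$ is submodular:
\[
H(\boldsymbol{a})+H(\boldsymbol{a}')\ge H(\boldsymbol{a}\vee\boldsymbol{a}')+H(\boldsymbol{a}\wedge\boldsymbol{a}').
\]

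\emph{Step 3 (submodularity of $H$, the main step).} Let $\sigma_1$ be a minimizer for $\boldsymbol{a}$ and $\sigma_2$ a minimizer for $\boldsymbol{a}'$. By the symmetry of the target inequality we may assume $\sigma_1\le\sigma_2$. We then use the feasible (not necessarily optimal) choices $\sigma_1$ for $\boldsymbol{a}\vee\boldsymbol{a}'$ and $\sigma_2$ for $\boldsymbol{a}\wedge\boldsymbol{a}'$. The $\sigma$ terms match on both sides, so it suffices to check, coordinate by coordinate, that
\[
(a_i-\sigma_1)^+ + (a'_i-\sigma_2)^+ \ge (\max(a_i,a'_i)-\sigma_1)^+ + (\min(a_i,a'_i)-\sigma_2)^+ .
\]
If $a_i\ge a'_i$ the two sides are equal. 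If $a_i<a'_i$, the inequality rearranges to $k(a'_i)\ge k(a_i)$ for
\[
k(x) := (x-\sigma_1)^+-(x-\sigma_2)^+ .
\]
This holds because $k$ is nondecreasing whenever $\sigma_1\le\sigma_2$. Multiplying each coordinate inequality by $v_{ij}\ge 0$ and summing gives
\[
H(\boldsymbol{a})+H(\boldsymbol{a}')=\varphi(\sigma_1,\boldsymbol{a})+\varphi(\sigma_2,\boldsymbol{a}')\ge \varphi(\sigma_1,\boldsymbol{a}\vee\boldsymbol{a}')+\varphi(\sigma_2,\boldsymbol{a}\wedge\boldsymbol{a}')\ge H(\boldsymbol{a}\vee\boldsymbol{a}')+H(\boldsymbol{a}\wedge\boldsymbol{a}').
\]

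\emph{Conclusion.} Chaining the three steps,
\[
G_j(\bm{\mu})+G_j(\bm{\eta})\ge H(\boldsymbol{a}(\bm{\mu})\vee \boldsymbol{a}(\bm{\eta}))+H(\boldsymbol{a}(\bm{\mu})\wedge\boldsymbol{a}(\bm{\eta})),
\]
where the first term equals $G_j(\bm{\mu}\wedge\bm{\eta})$ by the identity in Step 2, and the second is at least $G_j(\bm{\mu}\vee\bm{\eta})$ by the inequality in Step 2 and the monotonicity of $H$. This is the statement of Lemma~\ref{lem:G_submodular_full}.

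The only delicate point is Step 3: one must assign the smaller optimal multiplier to the componentwise maximum $\boldsymbol{a}\vee\boldsymbol{a}'$. With the opposite assignment, the coordinate inequality would require $k$ to be nondecreasing with $\sigma_1>\sigma_2$, which fails.
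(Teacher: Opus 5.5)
\textbf{Step 3 fails as written, and the multiplier assignment is backwards.} This is exactly the step you flagged as delicate. For a coordinate with $a_i<a_i'$, subtracting the right side of your coordinate inequality from the left gives
\[
\bigl[(a_i-\sigma_1)^+-(a_i-\sigma_2)^+\bigr]-\bigl[(a_i'-\sigma_1)^+-(a_i'-\sigma_2)^+\bigr]=k(a_i)-k(a_i').
\]
So you need $k(a_i)\ge k(a_i')$, not $k(a_i')\ge k(a_i)$. Since $k$ is nondecreasing when $\sigma_1\le\sigma_2$, the inequality goes the wrong way.

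Here is a concrete failure. Take two products with $v_{1j}=v_{2j}=1$, $\boldsymbol a=(1,0)$ and $\boldsymbol a'=(0,1)$. Every $\sigma\in[0,1]$ minimizes both $\varphi(\cdot,\boldsymbol a)$ and $\varphi(\cdot,\boldsymbol a')$, with value $1$. So $\sigma_1=0$, $\sigma_2=1$ is a legitimate choice. Yet
\[
\varphi(0,\boldsymbol a)+\varphi(1,\boldsymbol a')=2<3=\varphi(0,(1,1))+\varphi(1,(0,0)).
\]
The middle inequality of your chain is therefore false. Your closing remark, that the opposite assignment is the one that fails, is also reversed.

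\textbf{The fix is to pair the larger minimizer with the join.} Bound $H(\boldsymbol a\vee\boldsymbol a')\le\varphi(\sigma_2,\boldsymbol a\vee\boldsymbol a')$ and $H(\boldsymbol a\wedge\boldsymbol a')\le\varphi(\sigma_1,\boldsymbol a\wedge\boldsymbol a')$. Coordinates with $a_i\le a_i'$ then give equality. Coordinates with $a_i>a_i'$ reduce to $k(a_i)\ge k(a_i')$, which does follow from monotonicity of $k$. In the example, $\varphi(1,(1,1))+\varphi(0,(0,0))=1\le 2$. This is Topkis's partial-minimization argument. $\varphi$ is submodular in $(\boldsymbol a,\sigma)$ because each $(a_i-\sigma)^+$ is a convex function of $a_i-\sigma$. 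Minimizing out $\sigma$ preserves submodularity, with $\sigma_1\vee\sigma_2$ serving the join and $\sigma_1\wedge\sigma_2$ the meet.

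\textbf{Comparison with the paper.} With this correction, your Steps 1 and 2 and the concluding chain are sound, and your route differs genuinely from the paper's. The paper invokes the weak-DR characterization of \cite{bian2017continuous} and differentiates $G_j$ along $\boldsymbol e_i^\ell$ via LP sensitivity. It then proves that the maximal optimal knapsack variable $z^{\ell*}_{ij}$ is monotone by a greedy argument, and integrates. Your reduction to $H(\boldsymbol a(\bm\mu))$ collapses the warehouse index up front; the identity $\boldsymbol a(\bm\mu\wedge\bm\eta)=\boldsymbol a(\bm\mu)\vee\boldsymbol a(\bm\eta)$ together with monotonicity of $H$ handles the rest. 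It replaces the paper's derivative, tie-breaking and dual-solution bookkeeping with a one-line exchange of minimizers.
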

These technical lemmas are proven in the next subsections. We now move to the final stage of proving DR-submodularity of $f_{app}$. For any vector $\boldsymbol{x}\ge\bf{0}$ we define $\tilde{\bm{\mu}}_{\boldsymbol{x}} = \arg\min_{\bm{\mu\ge0}} F(\boldsymbol{x}, \bm{\mu})$ and $\tilde{\bm{\mu}}_{\boldsymbol{x}}^+ = \arg\min_{\bm{\mu\ge0}} F(\boldsymbol{x} + \boldsymbol{e}_i^{\ell}, \bm{\mu})$. Consider any integer vectors $\boldsymbol{c} \ge \boldsymbol{b} \ge \bf{0}$ and a unit integer vector $\boldsymbol{e}^{\ell}_i$. The desired inequality is established by the following chain of relations:
\begin{align*}
f_{app}(\boldsymbol{c}) + f_{app}(\boldsymbol{b} + \boldsymbol{e}_i^{\ell}) &\stackrel{(a)}{=} F(\boldsymbol{c},\tilde{\bm{\mu}}_{\boldsymbol{c}}) + F(\boldsymbol{b} + \boldsymbol{e}_i^{\ell},\tilde{\bm{\mu}}_{\boldsymbol{b}}^+) \\
&\stackrel{(b)}{=} F(\boldsymbol{c},\tilde{\bm{\mu}}_{\boldsymbol{c}}) + F(\boldsymbol{b},\tilde{\bm{\mu}}_{\boldsymbol{b}}^+) + (\tilde{\bm{\mu}}_{\boldsymbol{b}}^+)_i^\ell \\
&\stackrel{(c)}{\ge} F(\boldsymbol{c},\tilde{\bm{\mu}}_{\boldsymbol{c}} \wedge \tilde{\bm{\mu}}_{\boldsymbol{b}}^+) + F(\boldsymbol{b},\tilde{\bm{\mu}}_{\boldsymbol{c}} \vee \tilde{\bm{\mu}}_{\boldsymbol{b}}^+) + (\tilde{\bm{\mu}}_{\boldsymbol{b}}^+)_i^\ell \\
&\stackrel{(d)}{\ge} F(\boldsymbol{c},\tilde{\bm{\mu}}_{\boldsymbol{c}} \wedge \tilde{\bm{\mu}}_{\boldsymbol{b}}^+) + F(\boldsymbol{b},\tilde{\bm{\mu}}_{\boldsymbol{c}} \vee \tilde{\bm{\mu}}_{\boldsymbol{b}}^+) + (\tilde{\bm{\mu}}_{\boldsymbol{c}} \wedge \tilde{\bm{\mu}}_{\boldsymbol{b}}^+)_i^\ell \\
&\stackrel{(e)}{=} F(\boldsymbol{c} + \boldsymbol{e}_i^{\ell},\tilde{\bm{\mu}}_{\boldsymbol{c}} \wedge \tilde{\bm{\mu}}_{\boldsymbol{b}}^+) + F(\boldsymbol{b},\tilde{\bm{\mu}}_{\boldsymbol{c}} \vee \tilde{\bm{\mu}}_{\boldsymbol{b}}^+) \\
&\stackrel{(f)}{\ge} F(\boldsymbol{c} + \boldsymbol{e}_i^{\ell},\tilde{\bm{\mu}}_{\boldsymbol{c}}^+) + F(\boldsymbol{b},\tilde{\bm{\mu}}_{\boldsymbol{b}}) \\
&\stackrel{(g)}{=} f_{app}(\boldsymbol{c} + \boldsymbol{e}_i^{\ell}) + f_{app}(\boldsymbol{b}).
\end{align*}
Here, (a) and (g) follow from the definition of $f_{app}$ and $\tilde{\bm{\mu}}$. Equalities (b) and (e) use the property that $F(\boldsymbol{x} + \boldsymbol{e}_i^{\ell}, \bm{\mu}) = F(\boldsymbol{x}, \bm{\mu}) + \mu_i^\ell$. Inequality (c) is a direct application of Lemma \ref{lem:F_weak_submodular}. Inequality (d) holds because  $(\tilde{\bm{\mu}}_{\boldsymbol{b}}^+)_i^\ell \ge (\tilde{\bm{\mu}}_{\boldsymbol{c}} \wedge \tilde{\bm{\mu}}_{\boldsymbol{b}}^+)_i^\ell$. Finally, step (f) follows from the definition of $\tilde{\bm{\mu}}$ as a minimizer. This completes the main proof.

\subsection{Proof of Lemma~\ref{lem:L_submodular_full}}
The lemma follows from a simple identity: $x+y = (x \wedge y) + (x \vee y)$. Indeed, let $\delta_i^\ell = \mu_i^\ell - (\mu_i^\ell \wedge \eta_i^\ell) = (\mu_i^\ell \vee \eta_i^\ell) - \eta_i^\ell \ge 0$. We have:
\begin{align*}
L(\boldsymbol{c}, \bm{\mu}) - L(\boldsymbol{c}, \bm{\mu} \wedge \bm{\eta}) &= \sum_{i \in \mathcal{N}}\sum_{\ell \in \mathcal{L}} c_i^\ell (\mu_i^\ell - (\mu_i^\ell \wedge \eta_i^\ell)) = \sum_{i \in \mathcal{N}}\sum_{\ell \in \mathcal{L}} c_i^\ell \delta_i^\ell \\
&\ge \sum_{i \in \mathcal{N}}\sum_{\ell \in \mathcal{L}} b_i^\ell \delta_i^\ell 
= \sum_{i \in \mathcal{N}}\sum_{\ell \in \mathcal{L}} b_i^\ell ((\mu_i^\ell \vee \eta_i^\ell) - \eta_i^\ell)\\&=  L(\boldsymbol{b}, \bm{\mu} \vee \bm{\eta}) - L(\boldsymbol{b}, \bm{\eta}),
\end{align*}
where the inequality follows from $\boldsymbol{c} \ge \boldsymbol{b} \text{ and } \delta_i^\ell \ge 0$. Rearranging the terms gives the desired inequality.

\subsection{Proof of Lemma~\ref{lem:G_submodular_full}}
To prove that $G_j(\bm{\mu})$ is weak-DR submodular, we use an equivalent definition from \cite{bian2017continuous}: $G_j$ is weak-DR submodular if and only if for any non-negative real vectors $\bm{\mu} \ge \bm{\eta}$ and any single component $(i,\ell)$ where they are equal ($\mu_i^\ell = \eta_i^\ell$), the following inequality holds for any $h > 0$:
\[
G_j(\bm{\mu} + h \boldsymbol{e}_i^{\ell}) - G_j(\bm{\mu}) \le G_j(\bm{\eta} + h \boldsymbol{e}_i^{\ell}) - G_j(\bm{\eta}).
\]
Our strategy is to analyze the rate of change of the function $G_j$ along the direction $\boldsymbol{e}_i^{\ell}$. To do this, let us fix a component $(i,\ell)$ and define two single-variable helper functions: $g_{\bm{\mu}}(t) = G_j(\bm{\mu} + t \boldsymbol{e}_i^{\ell})$ and $g_{\bm{\eta}}(t) = G_j(\bm{\eta} + t \boldsymbol{e}_i^{\ell})$. As value functions of a linear program with bounded feasible region, these helper functions are continuous and piecewise-linear, which implies they are differentiable everywhere except for a finite number of points. Where the derivative exists, it is given by the negative of the optimal value of the dual variable corresponding to the $(i,\ell)$-constraint. The dual of the linear program for $G_j(\bm{\mu})$ in \eqref{eq:G_j_def} is the \textit{(modified) continuous knapsack problem}:
\begin{equation}\label{eq:knapsack_dual_full}
\max_{\boldsymbol{z}_j \ge \bf{0}} \left\{ \sum_{k \in \mathcal{N}} \sum_{w \in \mathcal{L}} \left(r^w_{kj} - \mu_k^w\right)^+ \cdot z_{kj}^w : \sum_{k \in \mathcal{N},w \in \mathcal{L}} z_{kj}^w \le 1, \; \sum_{w \in \mathcal{L}} z_{kj}^w \le v_{kj}, \; \forall k \in \mathcal{N} \right\}.
\end{equation}
Let $z_{ij}^{\ell*}(\bm{\mu})$ be the maximal value that the variable $z_{ij}^\ell$ takes across the set of all optimal solutions to this LP. The derivatives of our helper functions, where they exist, are given by $g'_{\bm{\mu}}(t) = -z_{ij}^{\ell*}(\bm{\mu} + t \boldsymbol{e}_i^{\ell})\,\mathbbm{1}[\mu_i^{\ell}+t < r^{\ell}_{ij}]$ and $g'_{\bm{\eta}}(t) = -z_{ij}^{\ell*}(\bm{\eta} + t \boldsymbol{e}_i^{\ell})\,\mathbbm{1}[\eta_i^{\ell}+t < r^{\ell}_{ij}]$. The proof now hinges on the monotonicity of this optimal solution, which we establish in the following claim.

\begin{claim}
If $\bm{\mu} \ge \bm{\eta}$ and $\mu_i^\ell = \eta_i^\ell$ for some $(i,\ell)$, then $z_{ij}^{\ell*}(\bm{\mu}) \ge z_{ij}^{\ell*}(\bm{\eta})$.
\end{claim}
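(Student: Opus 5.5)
The plan is to compute $z_{ij}^{\ell*}(\bm{\mu})$ in closed form from the greedy structure of the knapsack LP \eqref{eq:knapsack_dual_full}, and then read off monotonicity from that formula.

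Fix $j$ and write $a_{kw}(\bm{\mu}) = (r^w_{kj}-\mu^w_k)^+$. Set $a = a_{i\ell}(\bm{\mu})$, which equals $a_{i\ell}(\bm{\eta})$ because $\mu^\ell_i=\eta^\ell_i$. For $k\ne i$, let $b_k(\bm{\mu})=\max_{w\in\mathcal{L}} a_{kw}(\bm{\mu})$ be the best value of product $k$. Let $b'_i(\bm{\mu})=\max_{w\ne \ell} a_{iw}(\bm{\mu})$ be the best value of product $i$ over the other warehouses, with $b_i'=-\infty$ if $L=1$.

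The first step is to show the following identity:
\[
z_{ij}^{\ell*}(\bm{\mu}) \;=\; \mathbbm{1}\bigl[b'_i(\bm{\mu})\le a\bigr]\cdot \min\Bigl\{v_{ij},\;\Bigl(1-\sum_{k\ne i:\, b_k(\bm{\mu})>a} v_{kj}\Bigr)^+\Bigr\}.
\]
Two exchange arguments on optimal solutions give this.

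\emph{Upper bound.} First suppose some other warehouse $w$ of product $i$ has $a_{iw}>a$. Then any positive mass on $(i,\ell)$ can be moved to $(i,w)$. This keeps both constraints and strictly increases the objective, so every optimal solution has $z^\ell_{ij}=0$. Now suppose instead that some product $k\ne i$ with $b_k>a$ is not saturated, i.e.\ $\sum_w z^w_{kj}<v_{kj}$. Then mass on $(i,\ell)$ can be shifted to $k$'s best warehouse with a strict gain. Hence every optimal solution saturates all such $k$, which yields the bound $z^\ell_{ij}\le \bigl(1-\sum_{k\ne i,\, b_k>a}v_{kj}\bigr)^+$. The bound $z^\ell_{ij}\le v_{ij}$ is a constraint of the LP.

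\emph{Attainment.} I would build an optimal solution greedily. Process items in decreasing order of value. Among equal values, put $(i,\ell)$ first, and for each product use only one best warehouse. This solution attains the formula. Its optimality follows from the standard greedy optimality for this fractional knapsack with per-product caps, where a product can be collapsed to its best warehouse. Ties are harmless because equal-valued items are interchangeable without changing the objective. This covers the case $a=0$ too: items with value $0$ may be filled arbitrarily, so the tie-breaking rule lets $(i,\ell)$ take all leftover capacity.

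The second step is monotonicity. Since $\bm{\mu}\ge\bm{\eta}$, every value satisfies $a_{kw}(\bm{\eta})\ge a_{kw}(\bm{\mu})$, while $a$ is unchanged. Therefore $b'_i(\bm{\eta})\ge b'_i(\bm{\mu})$, so the indicator can only switch from $1$ to $0$ when passing from $\bm{\mu}$ to $\bm{\eta}$. Likewise $\{k\ne i: b_k(\bm{\eta})>a\}\supseteq\{k\ne i: b_k(\bm{\mu})>a\}$, so the subtracted sum can only grow. Every factor in the formula is therefore weakly smaller at $\bm{\eta}$, which gives $z_{ij}^{\ell*}(\bm{\mu})\ge z_{ij}^{\ell*}(\bm{\eta})$.

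I expect the main obstacle to be the first step, not the monotonicity. The difficulty is making the "maximal value over all optimal solutions" characterization airtight. This means handling ties between $(i,\ell)$ and other items, the degenerate case $a=0$ where zero-value items are freely fillable, and the interaction between the per-product cap $v_{ij}$ and the other warehouses of product $i$. I would handle these by proving the upper bound purely through strict-improvement exchanges, which use only strict inequalities, and proving attainment with the explicit tie-broken greedy solution. Doing it this way avoids appealing to uniqueness of optimal solutions.
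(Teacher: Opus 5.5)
Your proposal is correct and follows essentially the paper's route: the indicator $\mathbbm{1}[b'_i(\bm{\mu})\le a]$ encodes the paper's case split on whether the set $B_{\bm{\mu}}$ of strictly better warehouses for product $i$ is empty, and your sum over $\{k\ne i:\ b_k(\bm{\mu})>a\}$ is the paper's set $A_{\bm{\mu}}$ in the greedy formula. Monotonicity then follows from the same two facts the paper uses, namely $B_{\bm{\mu}}\ne\varnothing \Rightarrow B_{\bm{\eta}}\ne\varnothing$ and $A_{\bm{\mu}}\subseteq A_{\bm{\eta}}$. Your justification of the closed form (an exchange-argument upper bound plus a tie-broken greedy solution for attainment) is more careful than the paper's direct appeal to greedy. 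One small fix: state the saturation step only for optimal solutions with $z^\ell_{ij}>0$, which is all the bound needs, rather than for every optimal solution.
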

\begin{proof}
Let the dual objective coefficient for a variable $z_{kj}^w$ be $\pi_{kj}^w(\bm{\mu}) = (r^w_{kj} - \mu_k^w)^+$. Since $\mu_i^\ell = \eta_i^\ell$, the objective coefficient $\pi_{ij}^\ell$ of $(i,\ell)$ is the same in both problems: $\pi_{ij}^\ell(\bm{\mu}) = \pi_{ij}^\ell(\bm{\eta})$. For any other product-warehouse pair $(k,w)$, the condition $\bm{\mu} \ge \bm{\eta}$ implies $\pi_{kj}^w(\bm{\mu}) \le \pi_{kj}^w(\bm{\eta})$. These two properties of $\pi$ will be key in our proof.

If the set of warehouses offering better value for product $i$ than $\ell$ under $\bm{\mu}$, $B_{\bm{\mu}} = \{ w \in \mathcal{L} \mid\pi_{ij}^w(\bm{\mu}) > \pi_{ij}^\ell(\bm{\mu}) \} $, is non-empty, same is true for the similarly defined $B_{\bm{\eta}}$: for any $w \in B_{\bm{\mu}}$, the equality $\pi_{ij}^\ell(\bm{\mu}) = \pi_{ij}^\ell(\bm{\eta})$ and the inequality $\pi_{ij}^w(\bm{\eta}) \ge \pi_{ij}^w(\bm{\mu})$ give $\pi_{ij}^w(\bm{\eta}) \ge \pi_{ij}^w(\bm{\mu}) > \pi_{ij}^\ell(\bm{\mu}) = \pi_{ij}^\ell(\bm{\eta})$, so $w \in B_{\bm{\eta}}$. Moreover, whenever either $B_{\bm{\mu}}$ or $B_{\bm{\eta}}$ is non-empty, the corresponding optimal value $z^{\ell*}_{ij}(\bm{\mu})$ or $z^{\ell*}_{ij}(\bm{\eta})$ equals zero: the constraint $\sum_{w \in \mathcal{L}} z_{ij}^w \le v_{ij}$ pools all warehouses serving product $i$ under a single budget, and every warehouse in $B_{\bm{\mu}}$ (respectively $B_{\bm{\eta}}$) carries a strictly larger objective coefficient than $\ell$, so any optimal solution directs the entire allocation for product $i$ away from $\ell$. Consequently, if $B_{\bm{\eta}} \neq \varnothing$ then $z^{\ell*}_{ij}(\bm{\eta}) = 0 \le z^{\ell*}_{ij}(\bm{\mu})$ and the claim holds, while if $B_{\bm{\eta}} = \varnothing$ then $B_{\bm{\mu}} = \varnothing$ as well; it therefore remains to treat the case $B_{\bm{\mu}} = B_{\bm{\eta}} = \varnothing$. Suppose, then, that both sets are empty.

Let $A_{\bm{\mu}} = \{ k \in \mathcal{N} \mid \exists w \in \mathcal{L}: \pi_{kj}^w(\bm{\mu}) > \pi_{ij}^\ell(\bm{\mu}) \}$ be the set of product-warehouse pairs that outperform $(i,\ell)$ under $\bm{\mu}$. Similarly, define $A_{\bm{\eta}}$. Key properties of $\pi$ imply that $A_{\bm{\mu}} \subseteq A_{\bm{\eta}}$. Problem \eqref{eq:knapsack_dual_full} is a continuous (fractional) knapsack with a single unit budget $\sum_{k \in \mathcal{N}, w \in \mathcal{L}} z_{kj}^w \le 1$, so it is solved greedily: the products are sorted in decreasing order of their best coefficient $\max_{w \in \mathcal{L}} \pi_{kj}^w$ and each is allocated up to $v_{kj}$ until the budget is exhausted. The maximal allocation to $(i,\ell)$ is therefore determined by the residual capacity left after filling the products with higher maximum value:
\[
z_{ij}^{\ell*}(\bm{\mu}) = \min \left\{ v_{ij}, \left(1 - \sum_{k \in A_{\bm{\mu}}} v_{kj}\right)^+ \right\} \quad \text{and} \quad z_{ij}^{\ell*}(\bm{\eta}) = \min \left\{ v_{ij}, \left(1 - \sum_{k \in A_{\bm{\eta}}} v_{kj}\right)^+ \right\}.
\]
Since $A_{\bm{\mu}} \subseteq A_{\bm{\eta}}$, the sum over $A_{\bm{\mu}}$ is smaller, leaving more residual capacity. Thus, $z_{ij}^{\ell*}(\bm{\mu}) \ge z_{ij}^{\ell*}(\bm{\eta})$.
\end{proof}
The claim implies that $g'_{\bm{\mu}}(t) \le g'_{\bm{\eta}}(t)$ for all $t$ where both derivatives exist. Since this inequality holds almost everywhere on the interval $[0, h]$ and the functions are continuous, we can integrate both sides of $g'_{\bm{\mu}}(t) \le g'_{\bm{\eta}}(t)$ over $t \in [0,h]$ to obtain $g_{\bm{\mu}}(h) - g_{\bm{\mu}}(0) \le g_{\bm{\eta}}(h) - g_{\bm{\eta}}(0)$. Substituting back the definitions of the helper functions, $g_{\bm{\mu}}(h) = G_j(\bm{\mu} + h \boldsymbol{e}_i^{\ell})$ and $g_{\bm{\mu}}(0) = G_j(\bm{\mu})$ (and similarly for $\bm{\eta}$), yields the desired inequality:
\[
G_j(\bm{\mu} + h \boldsymbol{e}_i^{\ell}) - G_j(\bm{\mu}) \le G_j(\bm{\eta} + h \boldsymbol{e}_i^{\ell}) - G_j(\bm{\eta}).
\]
This is precisely the condition we set out to prove, which completes the proof of the lemma.

\section{Final Details of the Proof of Theorem \ref{thm:jointconst}}\label{app:pg}

In this appendix we explicitly write out the factors in the performance guarantees of the algorithm.

\noindent\textit{Case 1: General Case (No special assumptions).}
The guarantee is the product of:
\begin{enumerate}[label=(\roman*)]
    \item The $(1/2-\epsilon)$ factor for maximizing $f_{app}$ over a two-matroid intersection.
    \item The $1/2$ factor from Lemma~\ref{lem:fapp_approx}, relating $f_{app}(\boldsymbol{c})$ to $\text{LP}(\boldsymbol{c})$.
    \item The $0.322$ factor from Policy 2 for the general non-stationary case.
\end{enumerate}
The combined factor is $\beta = (1/2-\epsilon) \cdot 1/2 \cdot 0.322$ . We therefore have a $(0.0805 - \epsilon)$-approximation.

\vspace{0.5\baselineskip}
\noindent\textit{Case 2: Unlimited Product Supplies ($C_{\min} = +\infty$).}
The guarantee is the product of:
\begin{enumerate}[label=(\roman*)]
    \item The $(1-1/e)$ factor for maximizing $f_{app}$ over a single matroid.
    \item The $1/2$ factor from Lemma~\ref{lem:fapp_approx}.
    \item The $0.322$ factor from Policy 2.
\end{enumerate}
The combined factor is  $\beta = (1-1/e) \cdot 1/2 \cdot 0.322$. This gives a $0.161(1-1/e)$-approximation.

\vspace{0.5\baselineskip}
\noindent\textit{Case 3: Stationary Arrivals.}
The guarantee is the product of:
\begin{enumerate}[label=(\roman*)]
    \item The $(1/2-\epsilon)$ factor for maximizing $f_{app}$ over a two-matroid intersection.
    \item The $1/2$ factor from Lemma~\ref{lem:fapp_approx}.
    \item The $(1-1/e)$ factor from Policy 1 under stationarity.
\end{enumerate}
The combined factor is $\beta = (1/2-\epsilon) \cdot \frac{1}{2} \cdot (1-1/e)$. This gives a $(\frac{1}{4}(1-1/e) - \epsilon)$-approximation.

\vspace{0.5\baselineskip}
\noindent\textit{Case 4: Stationary Arrivals and Unlimited Supplies.}
The guarantee is the product of:
\begin{enumerate}[label=(\roman*)]
    \item The $(1-1/e)$ factor for maximizing $f_{app}$ over a single matroid.
    \item The $1/2$ factor from Lemma~\ref{lem:fapp_approx}.
    \item The $(1-1/e)$ factor from Policy 1.
\end{enumerate}
The combined factor is $\beta = (1-1/e)  \cdot \frac{1}{2} \cdot (1-1/e) = \frac{1}{2}(1-1/e)^2$. This gives a $\frac{1}{2}(1-1/e)^2$-approximation.

\section{Proof of Theorem \ref{thm:ib}}\label{app:ib}
We use a coupling argument and prove by induction on $t$ that the distribution of the vector of total \textit{(non-virtual)} sales up to time period $t$ is identical for both policies. Let $\bar{\boldsymbol{Z}}_t = (\bar{Z}^\ell_{ijt})_{i,j,\ell}$ be the sales vector for the inventory-agnostic policy and $\hat{\boldsymbol{Z}}_t = (\hat{Z}^\ell_{ijt})_{i,j,\ell}$ be the sales vector for the inventory-aware policy.

\textbf{Base Case $(t=0)$:} Both sales vectors are zero, so the property holds.

\textbf{Inductive Step:} Assume that the distributions of $\bar{\boldsymbol{Z}}_{t-1}$ and $\hat{\boldsymbol{Z}}_{t-1}$ are identical. Fix a realizable value $\boldsymbol{z} = (z^\ell_{ik})_{i,k,\ell}$ of the cumulative sales through period $t-1$, and condition each policy on its cumulative sales equalling $\boldsymbol{z}$, that is, on the events $\{\bar{\boldsymbol{Z}}_{t-1} = \boldsymbol{z}\}$ and $\{\hat{\boldsymbol{Z}}_{t-1} = \boldsymbol{z}\}$, respectively. We show that the conditional distribution of the period-$t$ sales given this event is the same function of $\boldsymbol{z}$ under both policies; since $\bar{\boldsymbol{Z}}_{t-1}$ and $\hat{\boldsymbol{Z}}_{t-1}$ have the same distribution, this yields that $\bar{\boldsymbol{Z}}_{t}$ and $\hat{\boldsymbol{Z}}_{t}$ have the same distribution.

Consider a sale of product $i$ to a type-$j$ customer from warehouse $\ell$ at time period $t$. Let $\bar{A}^\ell_{it}$ and $\hat{A}^\ell_{it}$ be the stock availability indicators for the agnostic and aware policies, respectively. Given the cumulative sales $\boldsymbol{z}$, both are the same deterministic function of $\boldsymbol{z}$:
\[ \bar{A}^\ell_{it} = \hat{A}^\ell_{it} = A^\ell_{it}(\boldsymbol{z}) := \mathbbm{1}\!\Big[\, \textstyle\sum_{k \in \mathcal{M}} z^\ell_{ik} < c^\ell_i \,\Big], \]
since product $i$ is in stock at warehouse $\ell$ exactly when fewer than its stocked quantity $c^\ell_i$ units have been sold from $\ell$.

\textit{For the agnostic policy}, a sale occurs if a demand for $(i,j,\ell)$ is generated and the item is in stock. The probability of demand for $(i,j,\ell)$ at time period $t$ is:
    \[  \lambda_{jt} \sum_{S\subseteq\mathcal{N}} \frac{\hat{w}_j(S)}{\tau_j} \phi_{ij}(S) \frac{\hat{y}^\ell_{ij}}{\sum_{\ell' \in \mathcal{L}_+} \hat{y}^{\ell'}_{ij}} = \frac{\lambda_{jt}}{\tau_j} \hat{y}^\ell_{ij}. \]
    Since this demand is generated independently of the history and, given $\bar{\boldsymbol{Z}}_{t-1} = \boldsymbol{z}$, product $i$ is in stock at warehouse $\ell$ exactly when $A^\ell_{it}(\boldsymbol{z}) = 1$, we obtain
    \[ \mathbb{P}\big(\text{sale under agnostic policy} \,\big|\, \bar{\boldsymbol{Z}}_{t-1} = \boldsymbol{z}\big) = \frac{\lambda_{jt}}{\tau_j} \hat{y}^\ell_{ij} \cdot A^\ell_{it}(\boldsymbol{z}). \]
    
\textit{For the aware policy}, a sale occurs if customer of type $j$ arrives, warehouse $\ell$ is pre-selected for product $i$, the product is in stock there, and the customer chooses product $i$ from the offered assortment. Warehouse $\ell$ is pre-selected for product $i$ (for a type-$j$ customer) with probability $\rho_{ij}(\ell)$. Given $\hat{\boldsymbol{Z}}_{t-1} = \boldsymbol{z}$, product $i$ is in stock on warehouse $\ell$ exactly when $A^\ell_{it}(\boldsymbol{z}) = 1$. Finally, by construction and the law of total probability, the probability that the customer chooses product $i$ given it is available (on the pre-selected warehouse $\ell$) is precisely the target marginal probability, which is $ \sum_{\ell' \in \mathcal{L}_+} \hat{y}^{\ell'}_{ij} / \tau_j$, for every realization $\boldsymbol{z}$. The conditional probability of a sale is therefore:
    \begin{align*}
    \mathbb{P}\big(\text{sale under aware policy} \,\big|\, \hat{\boldsymbol{Z}}_{t-1} = \boldsymbol{z}\big) &= \lambda_{jt} \cdot \rho_{ij}(\ell) \cdot A^\ell_{it}(\boldsymbol{z}) \cdot \mathbb{P}(i \text{ is chosen}  \mid i \text{ is available}) \\
    &= \lambda_{jt} \cdot \left(\frac{\hat{y}^\ell_{ij}}{\sum_{\ell' \in \mathcal{L}_+} \hat{y}^{\ell'}_{ij}}\right) \cdot A^\ell_{it}(\boldsymbol{z}) \cdot \left(\frac{\sum_{\ell' \in \mathcal{L}_+} \hat{y}^{\ell'}_{ij}}{\tau_j}\right) \\
    &= \frac{\lambda_{jt}}{\tau_j} \hat{y}^\ell_{ij} \cdot A^\ell_{it}(\boldsymbol{z}).
    \end{align*}

Thus, for every realizable $\boldsymbol{z}$, both policies assign the same probability to a sale of each fixed triple $(i,j,\ell)$ in period $t$; as at most one sale can occur in a period, these probabilities determine the entire conditional distribution of the period-$t$ sales given that the cumulative sales through period $t-1$ equal $\boldsymbol{z}$, which is therefore the same function of $\boldsymbol{z}$ for both policies. This completes our induction step.

Equality of distributions of $\bar{\boldsymbol{Z}}_T$ and $\hat{\boldsymbol{Z}}_T$ directly implies equality of the total expected profits of the two policies.

\end{document}